\newcommand{\R}{\mathbb{R}}
\newcommand{\C}{\mathbb{C}}
\newcommand\Z{\mathbb{Z}}
\newcommand\Prym{\textrm{Prym}}
\newcommand{\N}{\mathbb{N}}
\newcommand{\SL}{{\rm SL}}
\newcommand{\GL}{{\rm GL}}
\newcommand{\ol}{\overline}
\newcommand{\Ord}{\mathcal{O}}
\newcommand{\Pcal}{\mathcal{P}}
\newcommand{\Id}{\mathrm{Id}}
\newcommand{\SD}{\mathcal{S}^h_D}
\renewcommand\mod{\text{ mod }}
\newcommand{\DS}{\displaystyle}
\newcommand{\inv}{\tau}
\newcommand{\Aa}{\mathrm{Area}}
\newcommand{\Cc}{\mathcal{C}}
\newcommand{\Sc}{\mathcal{S}}
\newcommand{\Bc}{\mathcal{B}}
\newcommand{\Hc}{\mathcal{H}}
\newcommand{\PP}{\mathbf{P}}
\newcommand{\Jac}{\mathbf{Jac}}
\newtheorem{Theorem}{Theorem}[section]
\newtheorem{Corollary}[Theorem]{Corollary}
\newtheorem{Lemma}[Theorem]{Lemma}
\newtheorem{Proposition}[Theorem]{Proposition}
\newtheorem{Remark}[Theorem]{Remark}
\begin{document}
\title[Prym eigenforms in genus four]  {Weierstrass Prym eigenforms in genus four}

\author{Erwan Lanneau, Duc-Manh Nguyen}


\address{
UMR CNRS 5582 \newline
Univ. Grenoble Alpes, CNRS, Institut Fourier, F-38000 Grenoble, France}
\email{erwan.lanneau@univ-grenoble-alpes.fr}

\address{
UMR CNRS 5251 \newline
IMB Bordeaux-Universit\'e de Bordeaux \newline
351, Cours de la Lib\'eration \newline
33405 Talence Cedex, France}

\email{duc-manh.nguyen@math.u-bordeaux.fr}


\subjclass[2000]{Primary: 37E05. Secondary: 37D40}
\keywords{Real multiplication, Prym locus, Teichm\"uller curve}

\maketitle

\section{Introduction}
Let $\Hc(6)$ denote the space of pairs $(X,\omega)$, where $X$ is a Riemann surface of genus four and $\omega$
is a holomorphic $1$-form on X having a single zero. Following~\cite{Mc7}, $\Prym(6)$ is the subset of $\Hc(6)$ where $X$ admits a holomorphic involution (Prym involution) $\inv$ which has exactly two fixed points and satisfies $\inv^*\omega=-\omega$. We will call such pairs {\em Prym forms}. 
The space of holomorphic $1$-forms $\Omega(X)$ on $X$ splits into $\Omega^-(X,\inv)\oplus\Omega^+(X,\inv)$ where
$\Omega^-(X,\inv)$ is the eigenspace of the eigenvalue $-1$. Similarly one has 
$H^-(X;\Z)=\{c \in H_1(X,\Z), \; \inv_*c=-c\}$. Define $\PP(X,\inv)=(\Omega^-(X,\inv))^*/H_1^-(X;\Z)$.
By definition $\PP(X,\inv)$ is a sub-abelian variety of $\Jac(X)$. We will call it the {\em Prym variety} of $X$.
By assumption we have $\dim_\C \PP(X,\inv)=2$.

Recall that a {\em discriminant} is a positive integer congruent to $0$ or $1$ modulo $4$.
The quadratic order with discriminant $D$ is denoted by $\Ord_D$.
We have $\Ord_D \simeq \Z[x]/(x^2+bx+c)$, for any $(b,c) \in \Z^2$ such  that $b^2-4c=D$.
For each discriminant $D$, we define $\Omega E_D(6)$ the subset of $(X,\omega) \in \Prym(6)$ such that
\begin{itemize}
 \item[(1)] $\PP(X,\inv)$ admits a real multiplication by the quadratic order $\Ord_D$, and

 \item[(2)] $\omega$ is an eigenvector for the action of $\Ord_D$.
\end{itemize}
Elements of $\Omega E_D(6)$ are called {\em Prym eigenforms} in $\Hc(6)$. For a more detailed definition, we refer to \cite{Mc7,Lanneau:Manh:H4}.
In \cite{Mc7}, McMullen showed that the locus $\Omega E_D(6)$ is a finite union of closed $\GL^+(2,\R)$-orbits.
The geometry of these affine invariant subvarieties has been recently investigated in~\cite{Moller2011,TZ2016,TZ2017,Zac2017}.
The main goal of this paper is to complete this description.

\begin{Theorem}\label{thm:H6:eig:comp}
For any discriminant $D \not\in \{4,9\}$, the locus $\Omega E_D(6)$ is non empty and connected.
\end{Theorem}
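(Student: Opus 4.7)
The plan is to adopt the now-standard \emph{prototype plus butterfly move} strategy pioneered by McMullen for genus-two eigenforms and adapted by the authors to $\Hc(4)$ in~\cite{Lanneau:Manh:H4}. The proof splits into two parts: (a) classifying normalized representatives in $\Omega E_D(6)$ by a finite arithmetic dataset, and (b) showing that this dataset is connected under a natural family of moves coming from the $\GL^+(2,\R)$-action.

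For step~(a) I would exploit the fact that any Prym eigenform is completely periodic, so every saddle-connection direction is also the core direction of a periodic splitting whose cylinders have rationally commensurable moduli. In such a direction $\inv$ permutes the cylinders, and each orbit is either a fixed cylinder (necessarily carrying a fixed point of $\inv$) or a pair of exchanged cylinders. Combined with the Euler-characteristic constraint from the single zero of order six and the requirement that $\inv$ has exactly two fixed points, this leaves only a short list of admissible topological models for the horizontal splitting. Normalizing by the $\GL^+(2,\R)$-action and imposing that $\omega$ is an $\Ord_D$-eigenvector, the widths, heights, and twists must then satisfy Diophantine relations depending on a quadruple of integers; I would bundle these into a \emph{prototype set} $\Pcal_D$. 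Non-emptiness of $\Omega E_D(6)$ for $D \not\in \{4,9\}$ reduces to exhibiting an explicit integer solution of these prototype equations, while one should check directly that $\Pcal_4 = \Pcal_9 = \ety$, identifying the two excluded discriminants as genuine obstructions.

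For step~(b) I would introduce \emph{butterfly moves}: operations that start from a prototype, perform a carefully chosen Dehn twist in one of the cylinders, select a new rational periodic direction, and repackage the resulting surface as another element of $\Pcal_D$. The standard two-step argument then applies: first, every $(X,\omega) \in \Omega E_D(6)$ can be brought by $\GL^+(2,\R)$ to the prototypical form associated to some element of $\Pcal_D$; second, two prototypes related by a butterfly move represent surfaces in the same $\GL^+(2,\R)$-orbit. The theorem reduces to showing that the graph on $\Pcal_D$ whose edges are butterfly moves is connected --- a purely arithmetic statement about the admissible integer quadruples satisfying the prototype equations.

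The main obstacle is this last arithmetic step. The combinatorics in $\Prym(6) \subset \Hc(6)$ is appreciably richer than in $\Hc(4)$, since several non-equivalent types of Prym-symmetric cylinder decompositions must be shown to merge under the butterfly moves, and a given prototype may admit genuinely inequivalent transverse periodic directions. Carrying out the transitivity argument will require a finite but delicate case analysis tracking how each type of move acts on the quadruple $(a,b,c,e)$, together with a reduction procedure showing that every element of $\Pcal_D$ can be brought to a canonical representative. I expect the bookkeeping to be more involved than in the $\Hc(4)$ case, but the strategy itself to go through without the need for a new conceptual ingredient.
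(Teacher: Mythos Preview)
Your overall architecture is right up to the point where you say that the butterfly-move graph on $\Pcal_D$ should be connected and that ``the strategy itself [should] go through without the need for a new conceptual ingredient.'' In genus four this is precisely where the analogy with $\Hc(2)$ and $\Hc(4)$ breaks down, and it is the heart of the paper.

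Concretely: once you restrict to the prototypes $\Pcal^A_D$ of the Model~A four-cylinder decompositions (the only type on which butterfly moves act), one can show that for $D\equiv 0,4 \pmod 8$ the residue of $e$ modulo $4$ is a butterfly-move invariant, so $\Pcal^A_D$ has at least two components. More subtly, for $D\equiv 1\pmod 8$ there is a mod-$2$ invariant of the matrix of the generator of $\Ord_D$ that separates $\Pcal^A_D$ into two nonempty pieces $\Pcal^{A_1}_D$ and $\Pcal^{A_2}_D$ that butterfly moves cannot connect. Thus your proposed transitivity argument on $\Pcal_D$ would terminate with two components for most $D$, and you would (incorrectly) predict two $\GL^+(2,\R)$-orbits in $\Omega E_D(6)$.

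The missing ingredient is a second family of moves that do \emph{not} come from simple cylinders inside Model~A. The paper introduces ``switch moves'' $S_i$ that pass through Model~B decompositions (or, for even square $D$, through two-cylinder decompositions), together with an area formula $\Aa(\Cc)=\tfrac12\Aa(X)\lambda/\sqrt{D}$ that lets one read off the $e$-parameter of the target prototype from the area of a single semi-simple cylinder. One then exhibits, for each problematic $D$, a concrete surface admitting two Model~A directions whose $e$-parameters lie in different butterfly components. This is genuinely new relative to the $\Hc(4)$ argument: the reduction step is not bookkeeping but requires finding and exploiting these auxiliary periodic directions. Without it your proof cannot close.
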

We will see that $\Omega E_4(6)$ and $\Omega E_9(6)$ are empty.\\
A square-tiled surface is a form $(X,\omega)$ such that $\omega(\gamma) \in \Z\oplus\imath\Z$
for any $\gamma \in H_{1}(X,\Sigma,\Z)$, where $\Sigma$ is the zero set of $\omega$.
For such a surface, integration of the form $\omega$
gives a holomorphic map $X \rightarrow \C /\Z^{2}$ which can be normalized so that it is branched only
above the origin. The $n$ preimages of the square $[0,1]^{2}$ provide a tiling of the surface $X$.
We say that $(X,\omega)$ is {\em primitive} if
$$
\Lambda(X,\omega):=\left\{ \omega(\gamma),\  \gamma \in H_{1}(X,\Sigma,\Z)\right\}= \Z\oplus i\Z.
$$
$\GL^+(2,\R)$ acts naturally on $\mathcal {T}_n$ the set of degree $n$, primitive square-tiled surfaces in $\Prym(6)$.
Along the line we will also prove the following theorem for the topology of the branched covers:
\begin{Theorem}\label{thm:H6:eig:comp:2}
Let $n\in\Z$ be any integer. If $n\geq 8$ is even then there is exactly one $\GL^+(2,\R)$-orbit in $\mathcal T_n$. 
Otherwise $\mathcal T_n$ is empty.
\end{Theorem}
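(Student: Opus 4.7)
The plan is to combine elementary combinatorial constraints with an application of Theorem~\ref{thm:H6:eig:comp}. First I would pin down which $n$ can occur. For a degree-$n$ square-tiled surface in $\Hc(6)$, the single zero has cone angle $14\pi$ and contributes $28$ square-corners, while each of the $v$ regular vertices contributes $4$; the Euler relation gives $n = v+7 \geq 7$, so $\mathcal{T}_n=\emptyset$ for $n\leq 6$. To force $n$ even, I would analyse the action of $\inv$ on vertices and squares. Since $\inv^*\omega = -\omega$, $\inv$ acts as a half-turn in $\omega$-charts and has the zero $p_0$ as one of its two fixed points. The second fixed point $p_1$ is either a vertex, an edge midpoint, or a square centre. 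Placing $p_1$ at a vertex forces the remaining $v-1$ regular vertices to pair up under $\inv$, so $v$ is odd and $n$ is even. The other two cases --- which would yield odd $n$ --- must be ruled out by showing that the $\inv$-symmetric gluing cannot simultaneously satisfy $(X,\omega)\in\Hc(6)$, the two-fixed-point condition, and primitivity; the expected obstruction is either an extra fixed point of $\inv$, an extra zero of $\omega$, or a proper sublattice of absolute periods of $\omega$ inside $\Z\oplus i\Z$.

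Next, for each even $n\geq 8$, I would exhibit an explicit primitive model $(X_n,\omega_n)\in\mathcal{T}_n$ by taking a staircase configuration with a built-in half-turn symmetry about a regular vertex. For $n=8$ this is a unique minimal model consistent with $v=1$; for larger even $n$ one inserts $\inv$-symmetric pairs of squares. Primitivity is automatic from the presence of horizontal and vertical unit saddle connections, which already generate $\Z\oplus i\Z$.

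For uniqueness of the $\GL^+(2,\R)$-orbit I would appeal to Theorem~\ref{thm:H6:eig:comp}. An element of $\mathcal{T}_n$ is automatically an arithmetic Prym eigenform: the integer cohomology of the square-tiled surface induces a canonical real multiplication of $\PP(X,\inv)$ by a quadratic order $\Ord_{D(n)}$ of perfect-square discriminant depending on $n$, for which $\omega$ is an eigenvector. Assuming the routine verification that $D(n)\notin\{4,9\}$ for every even $n\geq 8$, Theorem~\ref{thm:H6:eig:comp} provides a single $\GL^+(2,\R)$-orbit inside $\Omega E_{D(n)}(6)$; since $\mathcal{T}_n$ is $\GL^+(2,\R)$-invariant and lies in this orbit, the two coincide.

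The main difficulty will lie in the parity argument of the first step: excluding the cases where $p_1$ is an edge midpoint or a square centre requires a delicate combinatorial analysis entangling the $\Hc(6)$ condition, the two-fixed-point constraint, and primitivity. By comparison, identifying $D(n)$ and checking $D(n)\notin\{4,9\}$ for $n\geq 8$ should be a direct computation using the $\inv$-action on the Prym lattice.
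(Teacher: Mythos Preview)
Your approach differs from the paper's, which is essentially a two-line reduction. Every square-tiled surface in $\Prym(6)$ is a Prym eigenform in some $\Omega E_{d^2}(6)$ (it is Veech, hence an eigenform by McMullen), and Proposition~\ref{prop:square:tiled} shows that a \emph{primitive} one then has exactly $n=2d$ squares. Thus $\mathcal T_n\subset\Omega E_{(n/2)^2}(6)$ for $n$ even and $\mathcal T_n=\varnothing$ for $n$ odd; since $\Omega E_4(6)$ and $\Omega E_9(6)$ are empty while Theorem~\ref{thm:H6:eig:comp} gives non-emptiness and connectedness for all other square discriminants, the result follows. Proposition~\ref{prop:square:tiled} itself is proved by normalising to a reduced or almost-reduced prototype (Lemma~\ref{lm:reduced}) and computing the area of the associated primitive model explicitly---not by a fixed-point analysis of $\inv$.

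Your parity argument can actually be finished more cleanly than you expect, and the square-centre case is not the obstruction. Normalise the covering $p\colon X\to\C/\Z^2$ so that $p(p_0)=0$; then $\inv$ descends to $-\id$ on the torus, so both fixed points of $\inv$ lie over $2$-torsion points. There are four $2$-torsion points but only two fixed points of $\inv$, so over at least two of the three nonzero $2$-torsion points the fibre of $p$ (of cardinality exactly $n$, since $p$ is unramified away from $0$) is permuted freely by $\inv$; hence $n$ is even in every case. No separate treatment of edge-midpoint versus square-centre placements is needed.

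The genuine gap is in your step~3. You assert that the discriminant is a function $D(n)$ of $n$ alone and call the check $D(n)\notin\{4,9\}$ routine, but you give no argument that two primitive $n$-square surfaces in $\Prym(6)$ must carry real multiplication by the \emph{same} order. This is precisely what Proposition~\ref{prop:square:tiled} supplies via $n=2d$, and its proof is a genuine (if short) area computation using the prototype normalisation, not a formality. Without it, Theorem~\ref{thm:H6:eig:comp} only tells you that each $\Omega E_{d^2}(6)$ is a single orbit, not that all of $\mathcal T_n$ lies in one such locus. Once $n=2d$ is established, your explicit constructions in step~2 also become unnecessary: non-emptiness for even $n\geq 8$ is immediate from the non-emptiness clause of Theorem~\ref{thm:H6:eig:comp}.
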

Theorem~\ref{thm:H6:eig:comp:2} generalizes previous result by~\cite{Mc4,HLelievre,Lanneau:Manh:H4}.
\subsection*{Outline}
This paper is very much a continuation of \cite{Lanneau:Manh:H4} in which we announced a weaker version of Theorem~\ref{thm:H6:eig:comp}.
This weaker result is obtained by using tools and techniques similar to the ones developed in \cite{Lanneau:Manh:H4} (see also \cite{Mc4}). However, because of some new phenomena in genus four, those tools are not sufficient to obtain Theorem~\ref{thm:H6:eig:comp}. We will give below an overview of our strategy to prove Theorems~\ref{thm:H6:eig:comp} and~\ref{thm:H6:eig:comp:2}.

\begin{enumerate}
 \item We start by showing that every $\GL^+(2,\R)$-orbit in $\Omega E_D(6)$ contains a horizontally periodic surface with 4 horizontal cylinders (cf. Lemma~\ref{lm:4cyl:dec:exist}).
 We then show that up to some renormalization by $\GL^+(2,\R)$, one can encode the corresponding cylinder decomposition by parameters called {\em prototypes}
 (cf. Proposition~\ref{prop:normalize:A}).
 For a fixed discriminant $D$, the set of prototypes is denoted by $\Pcal_D$.
 Note that $\Pcal_D$ is a finite set.

 \item There are two different diagrams, called Model A and Model B, for $4$-cylinder decompositions of surfaces in $\Prym(6)$. Therefore, the set of prototypes $\Pcal_D$ is naturally split into two disjoint subsets $\Pcal^A_D$ and $\Pcal^B_D$ according to the associated diagram.

\item We next introduce the Butterfly move transformations on the set $\Pcal^A_D$ (cf. Proposition~\ref{prop:BM:transform:rule}). Those transformations encode the switches from a $4$-cylinder decomposition in Model A to another $4$-cylinder decomposition in Model A on the same surface.
 We will call an equivalence class of the relation generated by the Butterfly moves in $\Pcal^A_D$ a {\em component} of $\Pcal^A_D$.
 By construction, surfaces associated with prototypes in the same component belong to the same $\GL^+(2,\R)$-orbit.
 Thus we obtain an upper bound for the number of $\GL^+(2,\R)$-orbits in $\Omega E_D(6)$ by the number of components of $\Pcal^A_D$.

 \item Using a similar strategy to the one used in \cite{Lanneau:Manh:H4} and \cite{Mc4}, one can classify the components of $\Pcal^A_D$ for $D$ large enough (cf. Theorem~\ref{theo:H6:connect:PD}).
 This classification reveals that $\Pcal_D^A$ has two components when $D$ is even or $D \equiv 1 \mod 8$.
 While the disconnectedness of $\Pcal^A_D$ for $D$ even can be  easily seen, the disconnectedness of $\Pcal^A_D$ for $D\equiv 1 \mod 8$ is somewhat more subtle (cf. Lemma~\ref{lm:no:conn:PAD:even} and Lemma~\ref{lm:D1mod8:S1:no:connect:S2}). This new phenomenon did not occur in genus two and three.

 Theorem~\ref{theo:H6:connect:PD} implies immediately that $\Omega E_D(6)$ is connected if $D\equiv 5 \mod 8$ (when $D$ is large enough).
 However, to our surprise, for the remaining values of $D$, the number $\GL^+(2,\R)$-orbits in $\Omega E_D(6)$ is not equal to the number of the components of $\Pcal^A_D$.
 This is another striking difference between genus four and genus two and three.

 \item To obtain Theorem~\ref{thm:H6:eig:comp} for $D$ even and $D \equiv 1 \mod 8$, one needs to connect two components of $\Pcal^A_D$.
 For this purpose, we  will introduce  new transformations on the set of prototypes.

 A prototype in $\Pcal_D$ is a quadruple  of integers $(w,h,t,e)$ satisfying some specific conditions depending on $D$ (see Proposition~\ref{prop:normalize:A}).
 Given a horizontally periodic surface in $\Omega E_D(6)$, it is generally difficult to determine all the parameters of the prototype of the cylinder decomposition in another periodic direction.
 Nevertheless, one important parameter, namely $e$, of this prototype can be computed quite easily (cf. Lemma~\ref{lm:cyl}).
 This new tool turns out to be an essential ingredient of our proofs.
 In what follows, we will only consider $D$ large enough  such that the generic statements of Theorem~\ref{theo:H6:connect:PD} hold.

 \begin{itemize}
  \item {\bf Case $D$ even:} The two components of $\Pcal^A_D$ are distinguished by the congruence class of $e$ modulo 4.
  To connect the two components of $\Pcal^A_D$, it suffices to construct a surface which admits  4-cylinder decompositions in Model A in two different directions,
  such that the corresponding $e$-parameters are not congruent modulo 4.
  For the case $D$ is even and not a square number, we  make use of 4-cylinder decomposition in Model B, and new transformations called {\em switch moves}, which correspond to passages from a cylinder decomposition in Model B to a cylinder decomposition in Model A. We will show that one can always find a suitable prototypical surface in Model B, and two switch moves among the four introduced in Proposition~\ref{prop:switch:move}, such that the prototypes of the  new periodic directions  belong to different components of $\Pcal^A_D$.
 For $D$ is an even square number, we will use 2-cylinder decompositions and adapted switch moves to get the same conclusion. Details are given in Sections~\ref{sec:pf:H6:conn:D:even:n:sq}, and~\ref{sec:main:thm:D:square:even}.

 \item {\bf Case $D\equiv 1 \mod 8$:} We denote the two components of $\Pcal^A_D$ by $\Pcal_D^{A_1}$ and $\Pcal_D^{A_2}$ (see Theorem~\ref{theo:H6:connect:PD}).
 The two components $\Pcal^{A_i}_D$ can not be  distinguished only by the $e$-parameter in general.
 However, there is a simple sufficient (but not necessary) condition on the $e$-parameter which allows us to conclude that the prototype belongs to $\Pcal_D^{A_1}$ but not $\Pcal^{A_2}_D$.
 In view of this observation, to prove Theorem~\ref{thm:H6:eig:comp} in this case, we construct a prototypical surface from a suitable prototype in $\Pcal_D^{A_2}$ and show that this surface admits a cylinder decomposition in Model A with associated prototype in $\Pcal_D^{A_1}$. Details are given in Section~\ref{sec:D1mod8}.
 \end{itemize}

 \item For small (and exceptional) values   of $D$, Theorem~\ref{thm:H6:eig:comp} are proved ``by hand'' with computer assistance.

 \item Theorem~\ref{thm:H6:eig:comp:2} is a direct consequence of Theorem~\ref{thm:H6:eig:comp} and the fact that a primitive square-tiled surface in $\Prym(6)$ belongs to $\Omega E_{d^2}(6)$ if and only if it is constructed from $2d$ unit squares (see Prop~\ref{prop:square:tiled}).
\end{enumerate}

\subsection*{Acknowledgements:} The authors warmly thank Jonathan Zachhuber and David Torres for helpful conversations.
This work was partially supported by the ANR Project GeoDyM and the Labex Persyval.

\section{Cylinder decompositions and the space of prototypes}\label{sec:prototypes}

The main goal of this section is to provide a canonical representation
of any four cylinder decomposition of a surface in $\Omega  E_{D}(6)$ in
terms    of    {\it    prototype}. We will also define an equivalence relation $\sim$ on the set of prototypes
such that the number of $\GL^+(2,\R)$-orbits in $\Omega E_D(6)$ is bounded by the number of equivalence classes of $\sim$.

\subsection{Four-cylinder decompositions}
Recall that a cylinder is called {\em simple} if each of its boundary consists of a single saddle connection.
We will call a cylinder {\em semi-simple} if one of its boundary components consists of a single saddle connection.
If it is not simple, then we will call it {\em strictly semi-simple}.
We first show

\begin{Lemma}\label{lm:4cyl:dec:exist}
 Let $(X,\omega)$ be a translation surface in $\Omega E_D(6)$ for some discriminant $D$. Then $(X,\omega)$ admits a 4-cylinder decomposition.
\end{Lemma}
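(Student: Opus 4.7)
The plan is to combine two inputs: the existence of at least one periodic direction on $(X,\omega)$, and a combinatorial argument forcing the maximum number of cylinders to be attained.

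First, in \cite{Mc7} McMullen shows that every Prym eigenform is completely periodic in the sense of Calta, so any direction containing a single closed regular trajectory of $\omega$ is in fact periodic and yields a decomposition of $(X,\omega)$ into horizontal cylinders. Since closed trajectories exist in a dense set of directions (Masur's theorem), at least one periodic direction exists, producing a cylinder decomposition with some $k\ge 1$ cylinders. The upper bound $k\le g=4$ is classical: the $k$ core curves are pairwise disjoint simple closed curves, hence span an isotropic subspace of $(H_1(X,\R),\cdot)$ under the symplectic intersection pairing, whose maximal rank is $g$.

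Next, the heart of the proof is to show that some periodic direction attains $k=4$. I would argue by contradiction. Assume every periodic direction has $k\le 3$ cylinders, and fix a direction, say horizontal, realizing such a decomposition. An Euler-characteristic count yields $c=2g-2+s=7$ horizontal saddle connections and $\sum_{i=1}^{k} n_i = 2c = 14$, where $n_i$ counts the saddle connections on the two boundary components of $C_i$. Hence some cylinder has at least $\lceil 14/k\rceil\ge 5$ boundary saddle connections. The Prym involution $\inv$ additionally permutes the horizontal cylinders: one of its fixed points is the zero of $\omega$ (on a boundary), and the second lies in the interior of some cylinder, which is therefore setwise fixed. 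This constrains the combinatorics drastically. In each remaining combinatorial configuration I would produce a closed geodesic of rational non-horizontal slope inside a sufficiently complex cylinder; by complete periodicity this new direction is periodic, and a direct count shows that its associated decomposition has strictly more cylinders, contradicting maximality.

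The main obstacle is exactly this last step. One has to enumerate the finitely many combinatorial types of $k$-cylinder decompositions with $k\in\{1,2,3\}$ compatible with the Prym involution, the unique zero of order $6$, and the fixed-point pattern of $\inv$, and in each case exhibit a splitting geodesic. The case $k=3$ is the most delicate, since the room to produce a genuinely new periodic direction is narrowest there. The notions of \emph{simple}, \emph{semi-simple} and \emph{strictly semi-simple} cylinder introduced just above the lemma are clearly tailored to orchestrate this analysis, suggesting that the authors' proof will proceed by exhibiting a strictly semi-simple (or simple) cylinder in any $k\le 3$ configuration and performing a cylinder-twist across it to obtain the transverse $4$-cylinder direction.
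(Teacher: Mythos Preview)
Your proposal contains a genuine error at a pivotal step, and this error causes you to miss the short argument the paper actually gives.

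You write that ``the second [fixed point] lies in the interior of some cylinder, which is therefore setwise fixed.'' But if a horizontal cylinder $C$ were setwise fixed by $\inv$, then, since $\inv^{*}\omega=-\omega$, the restriction $\inv|_{C}$ is an orientation-reversing isometry of the flat cylinder, i.e.\ a half-turn, and every half-turn of a flat cylinder has \emph{two} fixed points in the interior. Since $\inv$ has exactly one regular fixed point, this is impossible. Hence no cylinder is $\inv$-invariant, and the regular fixed point must lie on a horizontal saddle connection, not in the interior of a cylinder. The immediate consequence is that $\inv$ freely permutes the cylinders, so $k$ is even: only $k=2$ and $k=4$ can occur. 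Your ``most delicate'' case $k=3$, as well as $k=1$, simply do not arise.

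With this parity in hand, the paper's proof is very short: assume $k=2$ with cylinders $C_1,C_2$ swapped by $\inv$. An inspection of the (few) possible gluing patterns of the $7$ horizontal saddle connections shows that some saddle connection occurs on both the top and bottom of the same $C_i$; the segments crossing $C_i$ between the two copies sweep out a simple cylinder $C$ in a transverse direction. By complete periodicity (Veech), that direction is periodic. If it were a $2$-cylinder direction, the two cylinders would again be swapped by $\inv$ and hence both simple, contributing only $2+2=4$ boundary incidences instead of $14$; thus the direction of $C$ has $4$ cylinders.

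Beyond the fixed-point error, your proposal never actually performs the enumeration you announce; in particular, producing a splitting geodesic in a hypothetical $3$-cylinder configuration is not just delicate but vacuous. The correct route is to first extract the parity constraint from the Prym involution, reducing everything to the single case $k=2$, and then exhibit the transverse simple cylinder as above.
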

\begin{proof}
 By \cite{Mc7}, we know that $(X,\omega)$ is a Veech surface, hence it admits decompositions into cylinders in infinitely many directions.
 Recall that the Prym involution of $X$ has a unique regular fixed point. Thus, a cylinder cannot be invariant by this involution.
 It follows that there are either 2 or 4 cylinders in each cylinder decomposition.

 Suppose that $(X,\omega)$ admits a 2-cylinder decomposition in the horizontal direction.
 Let us denote the two horizontal cylinders by $C_1,C_2$.
 By inspecting all the possible configurations of the horizontal saddle connections,
 we see that for each $i\in \{1,2\}$, there is a saddle connection which is contained in both boundary components of $C_i$.
 Thus, there is a simple cylinder $C$ which is filled by simple closed geodesics  represented by geodesic segments joining a point in the bottom border of $C_i$ and a point in the top border of $C_i$. Since $(X,\omega)$ is a Veech surface, it admits a cylinder decomposition in the direction of $C$.
 Since $C$ is a simple cylinder, there must be 4 cylinders in this decomposition.
\end{proof}

\subsection{Space of prototypes}

The surfaces in $\Omega E_D(6)$ admit two types of decomposition into four cylinders,
which will be called Model $A$, and Model $B$. The Model $A$ is characterized by the presence of
simple cylinders, while the Model $B$ is characterized by the
presence of strictly semi-simple cylinders (see Figure~\ref{fig:H6:ModAB}).

The next proposition is analogous to \cite[Prop 4.2, 4.5]{Lanneau:Manh:H4}.

\begin{Proposition}
\label{prop:normalize:A}
%
Let $(X,\omega) \in \Omega E_D(6)$  be a Prym eigenform which admits a
cylinder    decomposition  with 4-cylinders, equipped with
the   symplectic   basis    presented   in Figure~\ref{fig:H6:ModAB}.
Then   up    to    the   action $\mathrm{GL}^+(2,\R)$ and Dehn twists
there exists $(w,h,t,e) \in \Z^{4}$ such that
\begin{enumerate}
\item   the          tuple         $(w,h,t,e)$         satisfies
  $(\mathcal{P}_D)\quad \left\{
  \begin{array}{l}
   w>0,h>0,\;        0\leq   t<\gcd(w,h),\\
  \gcd   (w,h,t,e)    =1,\\
 D=e^2+4wh,\\
 0< \lambda:=\frac{e+\sqrt{D}}{2}< w  \textrm{ and } \lambda \neq w/2\\
\end{array}
\right.$

\item There exists  a generator $T$ of $\Ord_D$ whose the matrix,
in  the  basis  $\{\alpha_1,\beta_1,\alpha_2,\beta_2\}$,  is
  $\left(%
\begin{smallmatrix}
  e & 0 & w & t \\ 0 & e & 0 & h \\ h & -t & 0 & 0 \\ 0 & w & 0 & 0
  \\
\end{smallmatrix}%
\right)$.
\item $T^*(\omega)=\lambda\omega$,
\item \label{normalize:A} In these coordinates
$$\left\{\begin{array}{l}
  \omega(\Z\alpha_{2,1}+\Z\beta_{2,1})=\omega(\Z\alpha_{2,2}+\Z\beta_{2,2})=\Z(\frac{w}{2},0)+\Z(\frac{t}{2},\frac{h}{2})
  \\ \omega(\Z\alpha_{1,1}+\Z\beta_{1,1})=\omega(\Z\alpha_{1,2}+\Z\beta_{1,2})=\frac{\lambda}{2}\cdot
  \Z^{2}
\end{array}
\right.
$$
\end{enumerate}

\noindent Conversely,  let $(X,\omega) \in \mathcal{H}(6)$ having a four-cylinder decomposition.
Assume there exists $(w,h,t,e)  \in \Z^4$ satisfying
$(\Pcal_D)$,  such  that  after  normalizing  by  $\GL^+(2,\R)$,  all  the
conditions in $(\ref{normalize:A})$  are fulfilled. Then $(X,\omega) \in \Omega E_D(6)$. 
\end{Proposition}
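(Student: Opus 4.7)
The proof follows the strategy developed in \cite[Prop.~4.2 and 4.5]{Lanneau:Manh:H4} for the analogous classification in $\Hc(4)$, adapted to the four-cylinder decompositions of Prym forms in $\Hc(6)$. The plan is to choose a canonical symplectic basis of $H_1^-(X;\Z)$ compatible with the cylinder diagram, normalize the surface via $\GL^+(2,\R)$ and Dehn twists to bring the period data to a standard form, and then use the real multiplication to read off the parameters $(w,h,t,e)$.

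First, since $\inv$ has only two fixed points (hence no horizontal cylinder can be $\inv$-invariant), the four horizontal cylinders come in two $\inv$-exchanged pairs $\{C_{i,1},C_{i,2}\}$ for $i=1,2$, with $i=1$ indexing the pair of simple (Model $A$) or strictly semi-simple (Model $B$) cylinders and $i=2$ the pair of wide cylinders. From the standard cycles $\alpha_{i,j},\beta_{i,j}$ of the individual cylinders one obtains a symplectic basis $\{\alpha_1,\beta_1,\alpha_2,\beta_2\}$ of $H_1^-(X;\Z)$ by anti-symmetrization (the precise choice being recorded in the figure). I would then act by $\GL^+(2,\R)$ to put the decomposition in the horizontal direction and rescale so that the wide cylinders have integer width $w>0$ and height $h>0$, and apply Dehn twists in those cylinders to push the twist parameter into the range $0\leq t<\gcd(w,h)$.

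Next, let $T\in\mathrm{End}(\PP(X,\inv))$ be a generator of $\Ord_D$ chosen self-adjoint with respect to the intersection pairing. Because $\omega$ is an eigenvector with real eigenvalue $\lambda$ and $\dim_\C\Omega^-(X,\inv)=2$, self-adjointness constrains the $4\times 4$ integer matrix of $T$ in the chosen symplectic basis to have precisely the displayed block form, with free parameters tied to the periods of $\omega$. Matching the off-diagonal blocks against the periods listed in (4) identifies them with $w,h,t$, while the diagonal block entry becomes the free integer $e$. The minimal polynomial relation $T^2-eT-wh\cdot\Id=0$ then forces both $D=e^2+4wh$ and $\lambda=(e+\sqrt D)/2$, and primitivity of the period lattice of $\omega$ inside $\Z^2$ yields $\gcd(w,h,t,e)=1$. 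Finally, the geometric requirement that the simple/semi-simple cylinder of horizontal period $\lambda/2$ fit strictly, and non-centrally, inside a wide cylinder of horizontal period $w/2$ yields $0<\lambda<w$ and $\lambda\neq w/2$.

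For the converse, starting from $(w,h,t,e)$ satisfying $(\Pcal_D)$ together with the period normalization (4), I would reconstruct $(X,\omega)$ by the explicit cylinder gluing of the appropriate model, define $T$ by the prescribed matrix, and verify directly that $T$ preserves the integral lattice $H_1^-(X;\Z)$, is self-adjoint, and satisfies $T^*\omega=\lambda\omega$; the relation $D=e^2+4wh$ then upgrades $\Z[T]$ to a copy of $\Ord_D$ and $(X,\omega)$ to a Prym eigenform in $\Omega E_D(6)$. The main technical obstacle I expect is pinning down the exact matrix form in (2): the vanishing of the $(2,2)$-entry and the precise shape of the off-diagonal blocks rely on a careful choice of the generator $T$ modulo $\Z\cdot\Id$ and on the compatibility of $T$ with the specific symplectic basis attached to the cylinder decomposition, rather than being automatic from self-adjointness alone. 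A secondary delicate point is the non-degeneracy condition $\lambda\neq w/2$, which requires a separate inspection of the cylinder diagram to exclude the coincidence in which the shorter cylinders sit centered inside the wide ones and the 4-cylinder decomposition collapses to a 2-cylinder one.
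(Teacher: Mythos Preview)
Your proposal is correct and follows essentially the same route as the paper: both defer to \cite[Prop.~4.2, 4.5]{Lanneau:Manh:H4} and rerun that argument with the symplectic basis adapted to the genus-four cylinder diagram. The paper's proof is in fact terser than yours---it simply points to \cite{Lanneau:Manh:H4} and records that the only new input is the intersection form on $H_1^-(X,\Z)$, which in the basis $\{\alpha_1,\beta_1,\alpha_2,\beta_2\}$ is $\left(\begin{smallmatrix} 2J & 0 \\ 0 & 2J \end{smallmatrix}\right)$ (with a factor of $2$ because each $\alpha_i,\beta_i$ is a sum of two cycles exchanged by $\tau$). You should make this explicit, since it is precisely this form that feeds into the self-adjointness computation you invoke and produces the displayed matrix of $T$; everything else is, as you say, a straightforward transcription of the genus-three argument.

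One small correction: the exclusion $\lambda\neq w/2$ is not about a collapse to a $2$-cylinder decomposition, but rather marks the boundary between Model~A ($\lambda<w/2$) and Model~B ($w/2<\lambda<w$); at $\lambda=w/2$ neither cylinder diagram in Figure~\ref{fig:H6:ModAB} is realized.
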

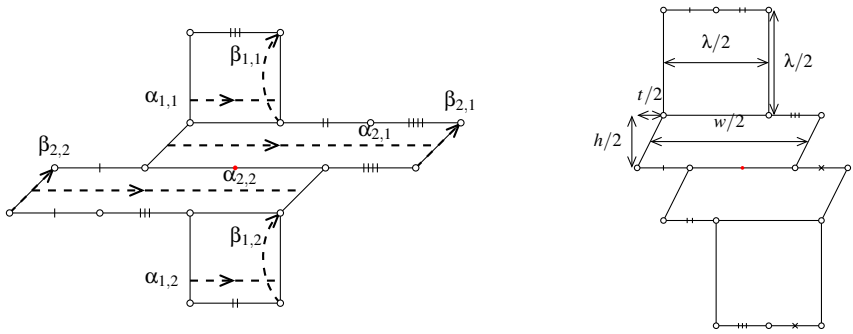
\begin{figure}[htbp]
\begin{minipage}[l]{0.23\linewidth}
\centering \subfloat{%
\begin{tikzpicture}[scale=0.6]

\draw (0,0) -- (-2,0) -- (-3,-1) -- (1,-1) -- (1,-3) -- (3,-3) -- (3,-1) -- (4,0) -- (6,0)
-- (7,1) -- (3,1) -- (3,3) --  (1,3) -- (1,1) -- cycle;

\draw (1,1) -- (3,1) (0,0) -- (4,0) (1,-1) -- (3,-1);

\draw[thick, dashed, ->, >= angle 45] (1,1.5) -- (2,1.5); \draw[thick,
  dashed] (2,1.5)  -- (3,1.5); \draw[thick,  dashed, ->, >=  angle 45]
(3,1) .. controls (2.5,1.5) and (2.5,2.5) .. (3,3);

\draw[thick,  dashed,   ->,  >=   angle  45]  (0.5,0.5)   --  (3,0.5);
\draw[thick, dashed] (3,0.5) -- (6.5,0.5);
\draw[thick, dashed, ->, >=  angle 45] (6,0) -- (7,1);

\draw[thick,  dashed,   ->,  >=   angle  45]  (-2.5,-0.5)   --  (0,-0.5);
\draw[thick, dashed] (0,-0.5) -- (3.5,-0.5);
\draw[thick, dashed, ->, >=  angle 45] (-3,-1) -- (-2,0);

\draw[thick,  dashed,   ->,  >=   angle  45]  (1,-2.5)   --  (2,-2.5);
\draw[thick, dashed] (2,-2.5) -- (3,-2.5);

\draw[thick, dashed, ->, >= angle 45] (3,-3) .. controls (2.5,-2.5) and (2.5,-1.5) .. (3,-1);

\filldraw[fill=white,draw=black] (0,0) circle (2pt)  (4,0)  circle (2pt)  (6,0)
circle (2pt) (7,1) circle (2pt)  (5,1) circle (2pt) (3,1) circle (2pt)
(3,3) circle (2pt) (1,3) circle (2pt) (1,1) circle (2pt) (-2,0) circle (2pt)
(-3,-1) circle (2pt) (-1,-1) circle (2pt) (1,-1) circle (2pt) (1,-3) circle (2pt)
(3,-3) circle (2pt) (3,-1) circle (2pt);

\draw   (2,-3)   +(-0.05,-0.1)   --  +(-0.05,0.1)   +(0.05,-0.1)   -- +(0.05,0.1);
\draw   (4,1)   +(-0.05,-0.1)   --  +(-0.05,0.1)   +(0.05,-0.1)   -- +(0.05,0.1);

\draw   (2,3)   +(0,-0.1)  -- +(0,0.1)  +(-0.1,-0.1) -- +(-0.1,0.1)  +(0.1,-0.1) --  +(0.1,0.1);
\draw   (0,-1)   +(0,-0.1)  -- +(0,0.1)  +(-0.1,-0.1) -- +(-0.1,0.1)  +(0.1,-0.1) --  +(0.1,0.1);

\draw  (-2,-1) +(0,-0.1)  -- +(0,0.1) ;
\draw  (-1,0) +(0,-0.1)  -- +(0,0.1) ;

\draw  (5,0) +(-0.15,-0.1)   --  +(-0.15,0.1)   +(-0.05,-0.1)   -- +(-0.05,0.1)
+(0.05,-0.1)   --  +(0.05,0.1)   +(0.15,-0.1)   -- +(0.15,0.1);
\draw  (6,1) +(-0.15,-0.1)   --  +(-0.15,0.1)   +(-0.05,-0.1)   -- +(-0.05,0.1)
+(0.05,-0.1)   --  +(0.05,0.1)   +(0.15,-0.1)   -- +(0.15,0.1);

\draw   (1,1.5)  node[left]  {$\scriptstyle   \alpha_{1,1}$}  (2.25,2)
node[above]    {$\scriptstyle   \beta_{1,1}$}    (1,-2.5)   node[left]
{$\scriptstyle  \alpha_{1,2}$}  (2.25,-2)  node[above]  {$\scriptstyle
  \beta_{1,2}$} (4.5,0.75)  node[right] {$\scriptstyle \alpha_{2,1}$} (7,1)
node[above] {$\scriptstyle \beta_{2,1}$}
(1.5,-0.25)  node[right] {$\scriptstyle \alpha_{2,2}$} (-2,0)
node[above] {$\scriptstyle \beta_{2,2}$};

\fill[red] (2,0) circle (1.5pt);
\end{tikzpicture}
}
\end{minipage}
\hskip 40mm
\begin{minipage}[l]{0.23\linewidth}
\centering
\begin{tikzpicture}[scale=0.35]
 \draw (0,6) -- (1,8) -- (1,12) -- (5,12) -- (5,8) -- (7,8) -- (6,6) -- (8,6) -- (7,4) -- (7,0) -- (3,0) --  (3,4) -- (1,4) -- (2,6) -- cycle;
 \foreach \x in {(1,8),(2,6),(3,4)} \draw \x -- +(4,0);

 \draw[thin, <->, >=angle 45]  (5.2,8) -- (5.2,12); \draw (5.2,10) node[right] {\tiny $\lambda/2$};
 \draw[thin, <->, >=angle 45]  (1,10) -- (5,10); \draw (3,10) node[above] {\tiny $\lambda/2$};
 \draw[thin, <->, >=angle 45]  (-0.2,6) -- (-0.2,8); \draw (-0.2,7) node[left] {\tiny $h/2$};
 \draw[thin, <->, >=angle 45]  (0,8) -- (1,8); \draw (0.5,8) node[above] {\tiny $t/2$};
 \draw[thin, <->, >=angle 45]  (0.5,7) --( 6.5,7); \draw (3.5,7) node[above] {\tiny $w/2$};

 \foreach \x in {(1,6),(2,12)}{\draw \x +(0,0.1) -- +(0,-0.1);}
 \foreach \x in {(2,4), (4,12)} \draw \x +(-0.1,0.1) -- +(-0.1,-0.1) +(0.1,0.1) -- +(0.1,-0.1);
 \foreach \x in {(4,0),(6,8)} \draw \x +(-0.15,0.1) -- +(-0.15,-0.1) +(0,0.1) -- +(0,-0.1) +(0.15,0.1) -- +(0.15,-0.1);
 \foreach \x in {(6,0),(7,6)} \draw \x +(-0.1,0.1) -- +(0.1,-0.1) +(-0.1,-0.1) -- +(0.1,0.1);

  \foreach \x in {(0,6), (1,4), (1,8), (1,12), (2,6), (3,0), (3,4), (3,12), (5,0), (5,8), (5,12), (6,6), (7,0), (7,4), (7,8), (8,6)} \filldraw[fill=white] \x circle (3pt);
  \fill[red] (4,6) circle (2pt);
\end{tikzpicture}
\end{minipage}
\caption{Basis
 $\{\alpha_{i,j},\beta_{i,j}\}_{i,j=1,2}$ of $H_1(X,\Z)$ associated with  cylinder decompositions
  of  Model (left)  $A$ and Model $B$ (right). For $i=1,2$, setting $\alpha_{i}:=\alpha_{i,1}+\alpha_{i,2}$ and
$\beta_i:=\beta_{i,1}+\beta_{i,2}$, then $\{\alpha_1,\beta_1,\alpha_2,\beta_2\}$ is a  symplectic  basis     of
  $H_{1}(X,\Z)^{-}$.  }
\label{fig:H6:ModAB}
\end{figure}

\begin{proof}[Proof of Proposition~\ref{prop:normalize:A}]
The proof follows the same lines as the proof of~\cite[Prop. 4.5]{Lanneau:Manh:H4}. The only difference is in the intersection
form on $H_1(X,\Z)^-$. In this case, the intersection form (in the basis $\{\alpha_1,\beta_1,\alpha_2,\beta_2\}$)
is $\DS{\left( \begin{smallmatrix} 2J & 0 \\ 0 & 2J \\ \end{smallmatrix} \right)}$. All the computations are
straightforward.
\end{proof}

\begin{Remark}\label{rmk:cond:prot:A:B}
The decomposition is of Model $A$ if and only if
$$
\lambda < w/2 \iff 2(e+2h) < w \iff (e+4h)^2< D,
$$
and of Model B if and only if
$$
w/2\ < \lambda < w \iff  e+h < w < 2(e+2h) \iff (e+2h)^2 < D <(e+4h)^2
$$
\end{Remark}
For any discriminant  $D$, we denote by $\Pcal_D$ the set of $(w,h,t,e) \in \Z^4$  satisfying $(\Pcal_D)$.
Elements of $\Pcal_D$ are called {\em prototypes}.
We also denote by $\Pcal^A_D,\Pcal^B_D$ the set of prototypes of Model $A$ and $B$, that is
$$
\Pcal^A_D:=\{(w,h,t,e) \in \Pcal_D, \, \lambda < w/2\}.
$$
$$
\Pcal^B_D:=\{(w,h,t,e) \in \Pcal_D, \, w/2< \lambda < w\}.
$$
The surface constructed from  a prototype $(w,h,t,e) \in \Pcal_D$ will be denoted by $X_D(w,h,t,e)$.

\subsection{Prototypes of model $A$}
We show that for any discriminant $D\neq 5$, any surface in $\Omega E_D(6)$ admits a decomposition in Model $A$
(compare with~\cite[Prop. 4.7]{Lanneau:Manh:H4}).
\begin{Proposition}
\label{prop:H6:noModA}
Let $(X,\omega) \in \Omega E_D(6)$ that does not admit any decomposition in model $A$.
Then, up to the action of $\mathrm{GL}^{+}(2,\R)$, $(X,\omega)$ is the surface presented in
Figure~\ref{fig:modelB:H6} (on the right).
In particular, the order $\mathcal{O}_D$ is isomorphic to $\Z[x]/(x^2+x-1)$ and $D=5$.
\end{Proposition}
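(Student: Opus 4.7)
The strategy, parallel to~\cite[Prop.~4.7]{Lanneau:Manh:H4}, is to show that the assumption ``every 4-cylinder decomposition of $(X,\omega)$ is of Model $B$'' is so restrictive that, up to $\GL^+(2,\R)$, only one surface can satisfy it.

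By Lemma~\ref{lm:4cyl:dec:exist}, $(X,\omega)$ admits a 4-cylinder decomposition, which must be of Model $B$ under our hypothesis. Acting by $\GL^+(2,\R)$ and applying Proposition~\ref{prop:normalize:A}, I may assume this decomposition is horizontal and encoded by a prototype $(w,h,t,e) \in \Pcal_D^B$; by Remark~\ref{rmk:cond:prot:A:B}, this is equivalent to the two inequalities $e + h < w < 2(e + 2h)$. Since $(X,\omega)$ is a Veech surface (by McMullen's result~\cite{Mc7} used in the proof of Lemma~\ref{lm:4cyl:dec:exist}), every direction containing a saddle connection is periodic, so each such direction also yields a 4-cylinder decomposition.

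Next I exploit this by listing the obvious saddle connections visible in the Model $B$ diagram of Figure~\ref{fig:H6:ModAB} (right). Each such saddle connection determines a periodic direction $\theta_i$ on $X$, and the induced 4-cylinder decomposition, after normalization by $\GL^+(2,\R)$ and Dehn twists, gives a new prototype $(w_i',h_i',t_i',e_i') \in \Pcal_D$. Using the symplectic basis of Figure~\ref{fig:H6:ModAB} and the explicit $\omega$-values in Proposition~\ref{prop:normalize:A}(\ref{normalize:A}), I can express $(w_i',h_i',t_i',e_i')$ explicitly in terms of $(w,h,t,e)$ and $\lambda = (e+\sqrt{D})/2$. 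By hypothesis each $(w_i',h_i',t_i',e_i')$ must again lie in $\Pcal_D^B$, yielding the inequality $e_i' + h_i' < w_i' < 2(e_i' + 2h_i')$ for each $i$.

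Combining two or three such inequalities leaves only a very short finite list of admissible quadruples $(w,h,t,e)$, and a case-by-case check will leave exactly one surviving surface, which is the one depicted in Figure~\ref{fig:modelB:H6}. A direct computation on that surface gives $D = e^2 + 4wh = 5$, and one verifies that the matrix $T$ from Proposition~\ref{prop:normalize:A}(2) satisfies $T^2 + T - \Id = 0$, so $\Ord_D \simeq \Z[x]/(x^2+x-1)$. The main obstacle is the combinatorial step: one must identify a sufficient collection of off-horizontal periodic directions on the Model $B$ diagram to make the system of Model $B$ inequalities collapse to a unique solution. Once that list is in hand, the resulting arithmetic is routine, in the spirit of~\cite[Prop.~4.7]{Lanneau:Manh:H4}.
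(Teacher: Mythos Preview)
Your strategy is in the right spirit, but what you have written is an outline, not a proof: you explicitly flag the core step (choosing enough auxiliary directions so that the Model~$B$ constraints collapse to a single solution) as an ``obstacle'' and then do not carry it out. The assertion that ``combining two or three such inequalities leaves only a very short finite list of admissible quadruples $(w,h,t,e)$'' is unsupported, and in fact computing the full new prototype $(w_i',h_i',t_i',e_i')$ in an off-horizontal direction is delicate---as the Switch moves in Proposition~\ref{prop:switch:move} later illustrate, only $e_i'$ is readily accessible, while $w_i',h_i',t_i'$ involve $\gcd$'s that are hard to control.

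The paper's argument sidesteps both difficulties. First, after putting the decomposition in Model~$B$, it normalizes so that the large cylinders are unit squares and works with the \emph{real} parameters $(x,y,t)$ of the small cylinders rather than the integer prototype. Second, and more importantly, it never computes a new prototype at all: in each auxiliary direction it simply asks whether a \emph{simple} cylinder is visible, since any simple cylinder immediately forces a Model~$A$ decomposition. This is a far lighter test than your proposed ``new prototype lies in $\Pcal_D^B$''. With this simplification, three carefully chosen directions suffice: two directions force $t=0$, and a third yields the single equation $x^2+x-1=0$, giving $x=y=\tfrac{-1+\sqrt{5}}{2}$ and hence $D=5$. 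If you want to complete your version, replace the prototype computation by this simple-cylinder criterion and carry out the three-direction argument explicitly.
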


\begin{proof}[Proof of Proposition~\ref{prop:H6:noModA}]
Since $(X,\omega)$ is a Veech surface, we can assume that $(X,\omega)$ is horizontally periodic.
By assumption, the cylinder decomposition in the horizontal direction is in Model B.
Using $\GL^+(2,\R)$-action, we can normalize $(X,\omega)$ the larger cylinders are represented by two unit squares.
Let $0<x<1, 0<y, 0\leq t < x$  be the width, height, and twist of the smaller ones (see Figure~\ref{fig:modelB:H6}).

\begin{figure}[htb]
\begin{minipage}[t]{0.4\linewidth}
\begin{tikzpicture}[scale=0.3]
\draw (0,0) -- (2,0) -- (2,-4) -- (3,-4) -- (1.5,-6) -- (4.5,-6) -- (6,-4) -- (6,0) -- (4,0) -- (4,4) -- (3,4) -- (4.5,6) -- (1.5,6) -- (0,4) -- cycle;
\draw (0,4) -- (4,4) (2,0) -- (4,0) (3,-4) -- (6,-4);
\draw[dashed, thin] (0,6) -- (1.5,6) (0,0) -- (1.5,6) (2,0) -- (3.5,6) (2,0) -- (0.5,6);

\draw[thin, >=angle 45, <->] (-0.25,0) -- (-0.25,4); \draw (-0.25,2) node[left] {\tiny $1$};
\draw[thin, >=angle 45, <->] (-0.25,4) -- (-0.25,6); \draw (-0.25,5) node[left] {\tiny $y$};
\draw[thin, >=angle 45, <->] (0,6.25) -- (1.5,6.25); \draw (0.75,6.25) node[above] {\tiny $t$};
\draw[thin, >=angle 45, <->] (1.5,6.25) -- (4.5,6.25); \draw (3.25,6.25) node[above] {\tiny $x$};
\draw[thin, >=angle 45, <->] (0,-0.25) -- (2,-0.25);  \draw (1,-0.25) node[below] {\tiny $2x-1$};
\draw[thin, >=angle 45, <->] (2,-2) -- (6,-2); \draw (4,-2) node[above] {\tiny $1$};
\foreach \x in {(0,0), (2,0), (2,-4), (3,-4), (1.5,-6), (2.5,-6), (4.5,-6), (6,-4), (6,0), (4,0), (4,4), (3,4), (4.5,6), (3.5,6), (1.5,6), (0.5,6), (0,4)} \filldraw[fill=white] \x circle (2pt);
\end{tikzpicture}
\end{minipage}
\begin{minipage}[t]{0.4\linewidth}
\begin{tikzpicture}[scale=0.3]
\fill[green!20] (0,6) -- (4,0) -- (intersection of 3,4 -- 5,1 and 4,4 -- 4,0) -- (intersection of 3,4 -- 1,7 and 0,6 -- 2,6) -- cycle;
\fill[green!20] (0,0) -- (intersection of -1,4 -- 1,1 and 0,0 -- 0,4) -- (intersection of 1,1 -- 3,-2 and 0,0 -- 2,0 ) -- cycle;
\draw[thin, >=angle 45, <->] (-0.25,0) -- (-0.25,4); \draw (-0.25,2) node[left] {\tiny $1$};
\draw[thin, >=angle 45, <->] (2,-2) -- (6,-2); \draw (4,-2) node[above] {\tiny $1$};
\draw[thin, >=angle 45, <->] (-0.25,4) -- (-0.25,6); \draw (-0.25,5) node[left] {\tiny $\frac{-1+\sqrt{5}}{2}$};
\draw[thin, >=angle 45, <->] (-0.25,6.25) -- (3,6.25); \draw (1.6,6.25) node[above] {\tiny $\frac{-1+\sqrt{5}}{2}$};

\draw (0,0) -- (2,0) -- (2,-4) -- (3,-4) -- (3,-6) -- (6,-6) -- (6,0) -- (4,0) -- (4,4) -- (3,4) -- (3,6) -- (0,6) -- cycle;

\draw (0,4) -- (3,4) (2,0) -- (4,0) (3,-4) -- (6,-4);

\draw[dashed, thin] (4,0) -- (0,6) (intersection of 3,4 -- 1,7 and 0,6 -- 2,6) -- (intersection of 3,4 -- 5,1 and 4,4 -- 4,0)
(intersection of -1,4 -- 1,1 and 0,0 -- 0,4) -- (intersection of 1,1 -- 3,-2 and 0,0 -- 2,0 );
\foreach \x in {(0,0), (2,0), (2,-4), (3,-4), (3,-6), (4,-6), (6,-6), (6,-4), (6,0), (4,0), (4,4), (3,4), (3,6), (2,6), (0,6), (0,4)} \filldraw[fill=white] \x circle (3pt);
\end{tikzpicture}
\end{minipage}
\caption{Model $B$: cylinders in directions $v_1, v_2$ (left), and  $v_3$ (right).}
\label{fig:modelB:H6}
\end{figure}
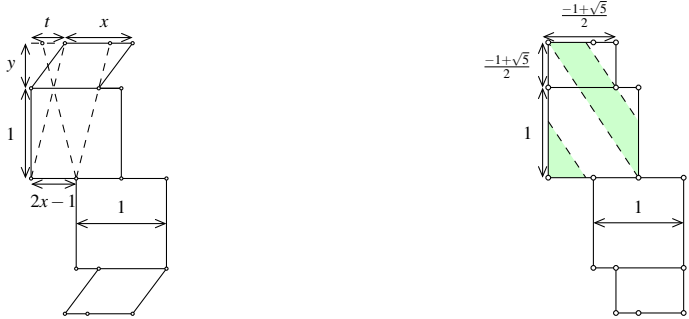
\noindent We first show $t=0 \mod x$. Assume $t>0$. There exists a cylinder in direction $v_1=\frac{y+1}{t}$.
Since $t>0$ this cylinder is not simple only when
\begin{equation}\label{cond:t:1}
\frac{1}{1-x}=\frac{y+1}{t}, \qquad \textrm{ or equivalently} \qquad t=(1-x)(1+y).
\end{equation}
Now, $t-(1-x)= (1-x)y >0$ implies that there exists a cylinder in direction $v_2=\frac{1+y}{x-t}$.
This cylinder is not simple only when $v_2$ is the vertical direction, which implies $t=x$.
\medskip

\noindent Since $t=0 \mod x$, condition~\eqref{cond:t:1} reads
$$\left\{
\begin{array}{ccl}
1+y & = & \frac{x}{1-x}, \\
 y &= & \frac{2x-1}{1-x}.
\end{array}
\right. \Rightarrow \frac{y}{y+1}=\frac{2x-1}{x}=2-\frac{1}{x}.
$$
It follows that $\frac{y}{y+1}<x$. Hence there exists a cylinder in direction $v_3=-(y+1)$.
This cylinder is not simple only if
$$
\frac{y}{y+1}=\frac{2x-1}{x}=1-x \Rightarrow x^2+x-1=0
$$
Solving above equation gives $x=y=\frac{-1+\sqrt{5}}{2}$ proving the proposition.
\end{proof}

\subsection{Butterfly moves}\label{sec:butterfly}
Let $(X,\omega):=X_D(w,h,t,e)$ be a prototypical  surface in $\Omega E_D(6)$ associated to a prototype $(w,h,t,e) \in \Pcal_D^A$.
We denote horizontal cylinders  of $X$ by $C_{i,j}, \ i,j \in \{1,2\}$, where $C_{i,1}$ and $C_{i,2}$ are exchanged by the
Prym involution, and $C_{1,j}$ is a simple cylinder.

Let $C'_{1}$ (resp. $C'_2$) be a simple cylinder contained in the closure of $C_{2,1}$ (resp. in the closure of $C_{2,2}$)
such that $C'_1$ and $C'_2$ are exchanged by the Prym involution $\tau$.
Note that $C'_1$ and $C'_2$ are disjoint from $C_{1,1}\cup C_{1,2}$.

Let $\alpha'_{1,j}$ be the element in $H_1(X,\Z)$ represented by the core curves of $C'_j$,
the orientation of the core curves are chosen such that $\tau(\alpha'_{1,1})=-\alpha'_{1,2}$.

We can write $\alpha'_{1,j}=p\alpha_{2,j}+q\beta_{2,j} \in H_1(X,\Z)$,
with $p\in \Z$, $q \in \Z\setminus\{0\}$ such that $\gcd(p,q)=1$.
Moreover, we can choose the orientation of $\alpha'_{1,j}$ such that $q >0$.
The following lemma gives a necessary and sufficient condition on $(p,q)$ for the existence of $C'_j$.
Its proof follows the same lines as \cite[Lem.7.2]{Lanneau:Manh:H4}.

\begin{Lemma}[Admissibility condition]
\label{lm:BM:admis:cond}
The simple cylinders $C'_j, \ j=1,2,$ exist if and only if
$$
0 < \lambda q < w/2 \Leftrightarrow (e+4qh)^2 < D.
$$
\end{Lemma}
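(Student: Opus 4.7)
My plan is to adapt the strategy from \cite[Lem.~7.2]{Lanneau:Manh:H4}, which handles the analogous situation in $\Hc(4)$, to the Model~$A$ geometry in $\Omega E_D(6)$. By Proposition~\ref{prop:normalize:A}, the cylinder $C_{2,j}$ has core period $(w/2,0)$ and transverse period $(t/2,h/2)$, and one of its boundary components carries a distinguished horizontal saddle connection of length $\lambda/2$ coming from the simple cylinder $C_{1,j}$. A candidate simple cylinder $C'_j\subset\ol{C_{2,j}}$ with core class $p\alpha_{2,j}+q\beta_{2,j}$ (with $q>0$ and $\gcd(p,q)=1$) has holonomy $\frac{1}{2}(pw+qt,\,qh)$, hence its core geodesics traverse $C_{2,j}$ exactly $q$ times in the transverse direction before closing up.

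For the admissibility analysis, I would unfold $C_{2,j}$ by stacking $q$ copies of its fundamental parallelogram vertically. In this cover, $C'_j$ exists inside $\ol{C_{2,j}}$ precisely when the straight trajectory issued from the singularity in the direction of the core curve of $C'_j$ closes up after traversing these $q$ copies, without its interior meeting any of the $q$ translates of the length-$\lambda/2$ obstacle on the boundary. Since the trajectory is straight with known slope and the obstacle is horizontal, inspecting the horizontal position of the trajectory at each transverse level should reduce the avoidance condition to the single inequality $0<q\lambda<w/2$.

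For the stated algebraic equivalence, substituting $\lambda=(e+\sqrt{D})/2$ into $2q\lambda<w$ gives $q\sqrt{D}<w-qe$; using $D=e^2+4wh$ one checks that the right-hand side is positive, so squaring yields $q^2 D<(w-qe)^2$, which simplifies to $2qe+4q^2 h<w$. Multiplying by $4h$ and adding $e^2$ to both sides gives $(e+4qh)^2<e^2+4wh=D$, and all steps are reversible.

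I expect the main difficulty to be the geometric admissibility step: rigorously tracing the identifications of $C_{2,j}$ inside $X$ in Model~$A$ and verifying that the length-$\lambda/2$ saddle connection on its boundary is the only obstruction to closing up the trajectory inside $\ol{C_{2,j}}$. Once that geometric picture is established, extracting the inequality $0<q\lambda<w/2$ and reformulating it as $(e+4qh)^2<D$ are routine.
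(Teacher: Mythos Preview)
Your proposal is correct and follows exactly the approach the paper indicates: the paper simply states that the proof follows the same lines as \cite[Lem.~7.2]{Lanneau:Manh:H4}, and you explicitly adapt that argument to the Model~A geometry here. Your unfolding argument for the geometric admissibility and your algebraic derivation of the equivalence $0<\lambda q<w/2 \Leftrightarrow (e+4qh)^2<D$ are both sound, so there is nothing to add.
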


Since $C'_j$ are simple cylinders, the surface $X$ admits a cylinder decomposition of Model A in the direction of $C'_j$.
Let $(w',h',t',e')$ be the prototype in $\Pcal_D^A$ associated to this cylinder  decomposition.
For our purpose, we will give a sketch of proof of the following proposition
(which parallels the proof of \cite[Prop.7.5,7.6]{Lanneau:Manh:H4}).

\begin{Proposition}\label{prop:BMpq:par:change}
Let $\Bc=(\alpha_1,\beta_1,\alpha_2,\beta_2)$ and $\Bc'=(\alpha'_1,\beta'_1,\alpha'_2,\beta'_2)$
denote the symplectic bases of $H_1^-(X,\Z)$ associated to $(w,h,t,e)$ and $(w',h',t',e')$ respectively.
Then the transition matrix $M$ of the basis change  from $\Bc$ to $\Bc'$  satisfies $M=M_1\cdot M_2\cdot M_3$,
where $M_1 \in \left(\begin{smallmatrix} \Id_2 & 0 \\ 0 & \SL(2,\Z) \end{smallmatrix}\right), \;
M_2= \left( \begin{smallmatrix}
 0 & 2 & 1 & 0 \\
 0 & 0 & 0 & 1 \\
 1 & 0 & 0 & 2 \\
 0 & 1 & 0 & 0 \\
\end{smallmatrix}
\right), \; M_3 \in \left(
\begin{smallmatrix}
 \left(\begin{smallmatrix} 1 & * \\ 0 & 1 \end{smallmatrix}\right) & 0 \\
 0 &  \SL(2,\Z) \\
\end{smallmatrix}
\right)$.  As a consequence, the new prototype $(w',h',t',e')$ satisfies
$$
\left\{\begin{array}{ccl}
e' & = & -e-4qh,\\
h' & = & \gcd(-qh,pw+qt)\\
\end{array}\right.
$$
\end{Proposition}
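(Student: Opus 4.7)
The plan is to factor the basis change $M: \Bc \to \Bc'$ into three geometric steps, each contributing one matrix factor $M_i$, and then to read off $e'$ and $h'$ from the conjugate of $T$ and from direct measurement of the new cylinders.

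First, since $\gcd(p,q)=1$, I complete $(p,q)$ to an element of $\SL(2,\Z)$ and apply this as a change of basis inside the second symplectic plane $\Z\alpha_2\oplus\Z\beta_2$ of $H_1^-(X,\Z)$. The resulting basis contains $\alpha'_1 = p\alpha_2+q\beta_2$ as one of its vectors and accounts for the factor $M_1$. Next, $M_2$ is the ``butterfly swap'': in the new direction, the simple cylinders $C'_j$ play the role previously played by $C_{1,j}$, while the old simple-cylinder plane becomes the new second plane. The coefficients $2$ in $M_2$ are forced by the intersection form on $H_1^-(X,\Z)$ computed in the proof of Proposition~\ref{prop:normalize:A}: since $\alpha_i=\alpha_{i,1}+\alpha_{i,2}$ and $\beta_i=\beta_{i,1}+\beta_{i,2}$, the form in the basis $\Bc$ is $\left(\begin{smallmatrix} 2J & 0 \\ 0 & 2J \end{smallmatrix}\right)$, and the duality relations between $(\alpha'_1,\beta'_1,\alpha'_2,\beta'_2)$ and $(\alpha_1,\beta_1,\alpha_2,\beta_2)$ inherit this doubling. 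Finally, $M_3$ absorbs the remaining normalization freedom in Proposition~\ref{prop:normalize:A}: a Dehn twist in the new simple cylinder contributes the upper-triangular $2\times 2$ block, and an $\SL(2,\Z)$-change in the new second symplectic plane allows us to enforce $0\leq t'<\gcd(w',h')$ together with the other conditions of $(\Pcal_D)$. At each stage I would verify $\tau$-anti-invariance using $\tau\alpha'_{1,1}=-\alpha'_{1,2}$.

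Once $M=M_1 M_2 M_3$ is established, the formula $e'=-e-4qh$ is obtained by computing the $(1,1)$-entry of $M^{-1} T M$: by Proposition~\ref{prop:normalize:A}(2) applied to $\Bc'$, this entry must equal $e'$, and since $M_3$ has the block-triangular shape stated above, the $(1,1)$-entry of $(M_1 M_2)^{-1} T (M_1 M_2)$ is unchanged upon conjugating further by $M_3$; a direct $4\times 4$ computation then yields $-e-4qh$. For $h'$, I would measure the new cylinders directly on $X$: in the direction of $\alpha'_{1,j}$, the combined width of the two new small cylinders $C'_j$ equals $|pw+qt|$ and their combined height equals $|qh|$, so after the $\GL^+(2,\R)$-normalization of Proposition~\ref{prop:normalize:A} the product $w'\cdot h'$ is determined by $(pw+qt)(qh)$. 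Combined with the primitivity condition $\gcd(w',h',t',e')=1$ and the admissibility condition $(e+4qh)^2<D$ of Lemma~\ref{lm:BM:admis:cond}, this forces $h'=\gcd(-qh,\,pw+qt)$.

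The main obstacle is the bookkeeping of signs, orientations, and complementary choices. I must ensure that each $M_i$ preserves the symplectic form on $H_1^-(X,\Z)$, that the resulting quadruple $(w',h',t',e')$ actually lies in $\Pcal_D^A$, and that $T$ is sent to itself rather than to $-T$ or another generator of $\Ord_D$; the orientation convention $q>0$ and the choice of $\tau$-equivariant labelling of $C'_1,C'_2$ enter critically here. Once these conventions are fixed, the argument is parallel to \cite[Prop.~7.5,~7.6]{Lanneau:Manh:H4}, the genus-four intersection form being precisely what accounts for the new $2$'s in $M_2$.
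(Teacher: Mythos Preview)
Your three-step factorization of $M$ is the same scheme as in the paper, and your description of $M_1$ and $M_3$ is accurate. But the derivations of $e'$ and $h'$ both contain genuine errors, not just missing bookkeeping.

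For $e'$: the $(1,1)$-entry of $M^{-1}TM$ is \emph{not} $e'$. If you carry out the computation you will find that the upper-left $2\times 2$ block of $(M_1M_2)^{-1}T(M_1M_2)$ is $-2qh\cdot\Id_2$, and conjugating further by $M_3$ leaves this block unchanged, as you say. So the $(1,1)$-entry of $M^{-1}TM$ is $-2qh$, not $-e-4qh$. The point you list at the end as a detail to be checked---``that $T$ is sent to itself rather than to $-T$ or another generator of $\Ord_D$''---is precisely where the argument lives, and your expectation is wrong: $T$ is \emph{not} sent to itself. The generator $T'$ attached to the prototype $(w',h',t',e')$ satisfies $T'=\pm T+f\cdot\Id_4$ for some $f\in\Z$, and one must determine the sign and the shift. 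Comparing the lower-right block ($(2qh+e)\cdot\Id_2$ versus $0$) and using the admissibility condition $\lambda-e-2qh>0$ forces $T'=T-(e+2qh)\Id_4$, whence $e'=-2qh-(e+2qh)=-e-4qh$.

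For $h'$: your geometric description is not correct. The holonomy of the core curve $\alpha'_{1,j}$ is $\omega(\alpha'_{1,j})=(\tfrac{pw+qt}{2},\tfrac{qh}{2})$; the numbers $|pw+qt|$ and $|qh|$ are the horizontal and vertical components of this vector, not the width and height of $C'_j$ as a cylinder, and in any case $h'$ encodes the period lattice of the new \emph{large} cylinders, not the small ones. The paper extracts $h'$ algebraically: the upper-right $2\times 2$ block of $T_3$ equals $\left(\begin{smallmatrix}1 & -n\\ 0 & 1\end{smallmatrix}\right)\left(\begin{smallmatrix}a & b\\ c & d\end{smallmatrix}\right)A$ with $c=-qh$, $d=pw+qt$, and since this block must equal $\left(\begin{smallmatrix}w' & t'\\ 0 & h'\end{smallmatrix}\right)$, the second row $(c,d)\cdot A=(0,h')$ forces $h'=\gcd(c,d)=\gcd(-qh,pw+qt)$.
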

\begin{proof}
Let $\eta'_{1,1}, \eta'_{1,2}$ be two saddle connections contained in $C'_1$ and $C'_2$ respectively such that
$\eta'_{1,2}=-\tau(\eta'_{1,1})$, where $\tau$ is the Prym involution (see Figure~\ref{fig:BM:pq:basis:change}).
Set $\alpha'_1=\alpha'_{1,1}+\alpha'_{1,2}, \quad \eta'_1=\eta'_{1,1}+\eta'_{1,2}$.
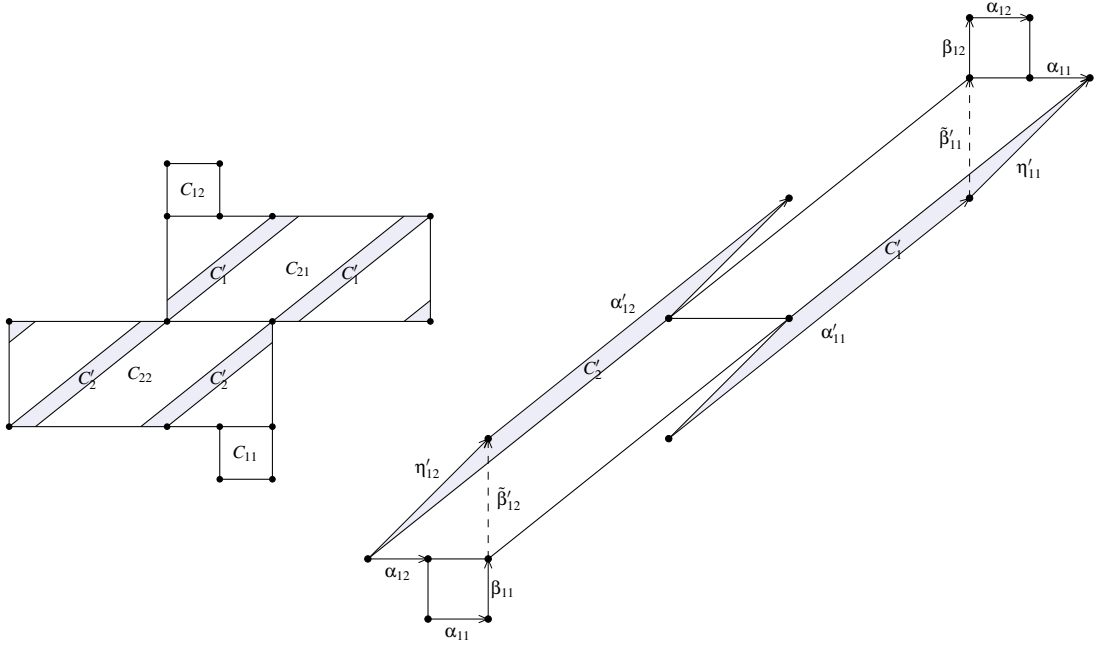
\begin{figure}[htb]
\begin{minipage}[l]{0.3\linewidth}
\centering
\begin{tikzpicture}[scale=0.35]
\fill[blue!70!yellow!10] (6,6) -- (11,10) -- (10,10) -- (6,6.8) -- cycle;
\fill[blue!70!yellow!10] (10,6) -- (15,10) -- (16,10) -- (11,6) --cycle;
\fill[blue!70!yellow!10] (15,6) -- (16,6) -- (16,6.8) -- cycle;
\fill[blue!70!yellow!10] (0,6) -- (0,5.2) -- (1,6) -- cycle;
\fill[blue!70!yellow!10] (0,2) -- (1,2) -- (6,6) -- (5,6) -- cycle;
\fill[blue!70!yellow!10] (5,2) -- (6,2) -- (10,5.2) -- (10,6) -- cycle;

\draw (0,6) -- (0,2) -- (8,2) -- (8,0) -- (10,0) -- (10,6) -- (16,6) -- (16,10) -- (8,10) -- (8,12)  -- (6,12) -- (6,6) -- cycle;
\draw (0,5.2) -- (1,6)  (0,2) -- (5,6) (6,6.8) -- (10,10) (1,2) -- (11,10)  (5,2) -- (15,10) (6,2) -- (10,5.2)  (11,6)  -- (16,10)  (15,6) -- (16,6.8);
\draw (6,10) -- (8,10)  (6,6) -- (10,6) (8,2) -- (10,2);

\foreach \x in {(0,6),(0,2),(6,12),(6,10),(6,6),(6,2),(8,12), (8,10),(8,2),(8,0),(10,10),(10,6),(10,2),(10,0),(16,10),(16,6)} \filldraw \x circle (3pt);

\draw (7,11) node {\tiny $C_{12}$} (11,8) node {\tiny $C_{21}$} (5,4) node {\tiny $C_{22}$} (9,1) node {\tiny $C_{11}$};
\draw (8,7.8) node {\tiny $C'_1$} (13,7.8) node {\tiny $C'_1$} (3,3.8) node {\tiny $C'_2$} (8,3.8) node {\tiny $C'_2$};
\end{tikzpicture}
\end{minipage}
\begin{minipage}[l]{0.6\linewidth}
\centering
\begin{tikzpicture}[scale=0.4]
\fill[blue!70!yellow!10] (0,0) -- (10,8) -- (14,12) -- (4,4) -- cycle;
\fill[blue!70!yellow!10] (10,4) -- (20,12) -- (24,16) -- (14,8) -- cycle;

\draw[very thin, ->, >=angle 45] (20,18) -- (22,18);
\draw[very thin, ->, >=angle 45] (22,16) -- (24,16);
\draw[very thin, ->, >=angle 45] (10,4) -- (20,12);
\draw[very thin, ->, >=angle 45] (20,12) -- (24,16);
\draw[very thin, ->, >=angle 45] (20,16) -- (20,18);
\draw[very thin, dashed, ->, >=angle 45] (20,12) -- (20,16);

\draw[very thin, ->, >=angle 45] (0,0) -- (2,0);
\draw[very thin, ->, >=angle 45] (2,-2) -- (4,-2);
\draw[very thin, ->, >=angle 45] (0,0) -- (4,4);
\draw[very thin, ->, >=angle 45] (4,4) -- (14,12);
\draw[very thin, ->, >=angle 45] (4,-2) -- (4,0);
\draw[very thin, dashed, ->, >=angle 45] (4,0) -- (4,4);

\draw (0,0) -- (20,16)  (22,18) -- (22,16)  (24,16) -- (4,0)   (2,-2) -- (2,0);
\draw (2,0) -- (4,0) (10,4) -- (14,8) -- (10,8) -- (14,12)  (20,16) -- (22,16);

\foreach \x in {(0,0),(2,0),(2,-2),(4,4),(4,0),(4,-2),(10,8),(10,4),(14,12),(14,8),(20,18),(20,16),(20,12),(22,18),(22,16),(24,16)} \filldraw \x circle (3pt);

\draw (21,18.3) node {\tiny $\alpha_{12}$}  (23,16.3) node {\tiny $\alpha_{11}$};
\draw (19.5,17) node {\tiny $\beta_{12}$};
\draw (1,-0.5)  node {\tiny $\alpha_{12}$} (3,-2.5) node {\tiny $\alpha_{11}$};
\draw (4.5,-1) node {\tiny $\beta_{11}$};
 \draw (22,13) node {\tiny $\eta'_{11}$} (2,3) node {\tiny $\eta'_{12}$};
 \draw (15.5,7.5) node {\tiny $\alpha'_{11}$} (8.5,8.5) node {\tiny $\alpha'_{12}$} ;
 \draw (19.4,14)  node {\tiny $\tilde{\beta}'_{11}$} (4.7,2) node {\tiny $\tilde{\beta}'_{12}$};
 \draw (17.5,10.3) node {\tiny $C'_1$} (7.5,6.3) node {\tiny $C'_2$} ;
\end{tikzpicture}
\end{minipage}
\caption{Switching periodic directions: symplectic basis change.}
\label{fig:BM:pq:basis:change}
\end{figure}


\noindent \underline{Step 1:} set $\tilde{\beta}'_{1,j}=\eta'_{1,j}-\alpha_{1,1}-\alpha_{1,2} \in H_1(X,\Z)$
(see Figure~\ref{fig:BM:pq:basis:change}),  and $\tilde{\beta}'_1=\tilde{\beta}'_{1,1}+\tilde{\beta}'_{1,2}$.
We have, $(\alpha'_1,\tilde{\beta}'_1)=(\alpha_2,\beta_2)\cdot \left( \begin{smallmatrix} p & r \\ q & s \end{smallmatrix} \right)$,
where $\left( \begin{smallmatrix} p & r \\ q & s \end{smallmatrix} \right) \in \SL(2,\Z)$.
Therefore, $(\alpha_1,\beta_1,\alpha'_1,\tilde{\beta}'_1)$ is a symplectic basis of $H_1^-(X,\Z)$,
and $(\alpha_1,\beta_1,\alpha'_1,\tilde{\beta}'_1)=(\alpha_1,\beta_1,\alpha_2,\beta_2)\cdot M_1$,
where $M_1=\left(\begin{smallmatrix} \Id_2 & 0 \\ 0 & \left(  \begin{smallmatrix} p & r \\ q & s \end{smallmatrix} \right)
\end{smallmatrix}\right)$.

\noindent \underline{Step 2:} set
$$
\left\{ \begin{array}{lcl}
\tilde{\alpha}'_{2,j}  =  \alpha_{1,j}, \ j=1,2  & \Rightarrow & \tilde{\alpha}'_2:=\tilde{\alpha}'_{2,1}+\tilde{\alpha}'_{2,2}=\alpha_1\\
\tilde{\beta}'_{2,j} = \alpha'_{1,1}+\alpha'_{1,2} +\beta_{1,j} \ j=1,2 & \Rightarrow & \tilde{\beta}'_2:=\tilde{\beta}'_{2,1}+\tilde{\beta}'_{2,2}=\beta_1+2\alpha'_1.
\end{array}
\right.
$$
Recall that $\eta'_1=\eta'_{1,1}+\eta'_{1,2}=\tilde{\beta}'_2+2\alpha_1$.
Thus $(\alpha'_1,\eta'_1,\tilde{\alpha}'_2,\tilde{\beta}'_2)$ is a symplectic basis of $H_1(X,\Z)^-$,
and $(\alpha'_1,\eta'_1,\tilde{\alpha}'_2,\tilde{\beta}'_2)=(\alpha_1,\beta_2,\alpha'_1,\tilde{\beta}'_1)\cdot M_2$,
where $M_2= \left(
\begin{smallmatrix}
 0 & 2 & 1 & 0 \\
 0 & 0 & 0 & 1 \\
 1 & 0 & 0 & 2 \\
 0 & 1 & 0 & 0 \\
\end{smallmatrix}
\right)$.

\noindent \underline{Step 3:} the complement of $C'_1\cup C'_2$ in $X$ is the union of two cylinders $C''_1$ and $C''_2$ in the same direction.
Let $\alpha'_{2,j}$ be a core curve of  $C''_j$, and $\eta'_{2,j}$ a saddle connection in $C''_j$ that crosses $\alpha'_{2,j}$ once.
Set $\alpha'_2:=\alpha'_{2,1}+\alpha'_{2,2}, \eta'_2:=\eta'_{2,1}+\eta'_{2,2}$, then $(\alpha'_2,\eta'_2)=(\tilde{\alpha}'_2,\tilde{\beta}'_2)\cdot A$,
with $A \in \SL(2,\Z)$.

We now observe that  the symplectic basis $\Bc'$ of $H_1^-(X,\Z)$ adapted to the cylinder decomposition in the direction of $C'_j$ must be
$(\alpha'_1,\beta'_1,\alpha'_2,\beta'_2)$, where $\beta'_i$ is obtained from $\eta'_i$ by some Dehn twist. Therefore,
$(\alpha'_1,\beta'_1,\alpha'_2,\beta'_2)=(\alpha'_1,\eta'_1,\tilde{\alpha}'_2,\tilde{\beta}'_2)\cdot M_3$,
where $M_3 \in \left(
\begin{smallmatrix}
 \left(\begin{smallmatrix} 1 & * \\ 0 & 1 \end{smallmatrix}\right) & 0 \\
 0 &  \SL(2,\Z) \\
\end{smallmatrix}
\right)$, and the first assertion follows.

Let $T$ be the generator of $\Ord_D$ associated to the prototype $(w,h,t,e)$.
Recall that the matrix of $T$ in the basis $\Bc$ is given by
$T=\left(
\begin{smallmatrix}
 e & 0 & w & t \\
 0 & e & 0 & h \\
 h & -t & 0 & 0 \\
 0 & w  & 0 & 0 \\
\end{smallmatrix}
\right)$.
Let $T_2$ and $T_3$ be the matrices of $T$ in the bases $(\alpha'_1,\eta'_1,\tilde{\alpha}'_2,\tilde{\beta}'_2)$
and $(\alpha'_1,\beta'_1,\alpha'_2,\beta'_2)$ respectively.
A direct computation shows
$$
T_2=M_2^{-1}\cdot M_1^{-1}\cdot T \cdot M_1\cdot M_2=
\left(
\begin{smallmatrix}
 -2qh & 0 & a & b \\
 0 & -2qh & c & d \\
 d & -b & 2qh+e &  0  \\
 -c & a & 0 & 2qh+e \\
\end{smallmatrix}
\right)
$$
where
$$
\left\{
\begin{array}{ccl}
 a & = & sh, \\
 b & = & -4qh-rw-st-2e,\\
 c & = & -qh,\\
 d & = & pw +qt.
\end{array}
\right.
$$
Hence
$$
T_3=M_3^{-1}\cdot T_2 \cdot M_3=\left(
\begin{array}{cc}
 -2qh\cdot\Id_2 & \left(\begin{smallmatrix} 1 & -n \\ 0 & 1  \end{smallmatrix}\right)\cdot\left( \begin{smallmatrix} a & b \\ c & d \end{smallmatrix}\right) \cdot A \\
 A^{-1}\cdot \left( \begin{smallmatrix} d & -b \\ -c & a \end{smallmatrix}\right)\cdot \left(\begin{smallmatrix} 1 & n \\ 0 & 1  \end{smallmatrix}\right) & (2qh+e)\cdot \Id_2\\
\end{array}
\right), \; \text{ with } n\in \Z, \, A \in \SL(2,\Z).
$$
Consider now the generator $T'$ associated to the cylinder decomposition in the direction of $C'_1$.
The matrix of $T'$ in the basis $\Bc'$ is given by
$T'=\left(
\begin{smallmatrix}
 e' & 0 & w' & t' \\
 0 & e' & 0 & h' \\
 h' & -t' & 0 & 0 \\
 0 & w'  & 0 & 0 \\
\end{smallmatrix}
\right)$ with $(w',h',t',e') \in \Pcal_D^A$.
Since $T$ and $T'$ are both generators of $\Ord_D$, we must have $T'=\pm T+f\Id_4$, with $f \in \Z$.
Comparing the matrices of $T$ and $T'$ in $\Bc'$, and using the admissibility condition
$0 < \lambda q < w/2 \Leftrightarrow \lambda-e-2qh >0$, we get
$$
\left\{
\begin{array}{ccl}
T' & = & T-(e+2qh), \\
e' & = & -e -4qh, \\
h' & = & \gcd(c,d)=\gcd(-qh,pw+qt).\\
\end{array}
\right.
$$
\end{proof}

 We will call the operation of passing from the cylinder decomposition in the horizontal direction to the cylinder decomposition in the direction of $C'_1$ a {\em Butterfly move}. If the pair of integers associated with the core curve of $C'_1$ is $(1,q), q \in \Z\setminus\{0\}$, we denote the corresponding Butterfly move   by $B_q$. If this pair of integer is $(0,1)$, then the corresponding Butterfly move is denoted by $B_\infty$.
Note that the Butterfly moves preserve the type of the decomposition, thus they induce transformations on the set of prototypes  $\Pcal^A_D$.

By the same arguments as \cite[Lem.7.2]{Lanneau:Manh:H4} and \cite[Prop.7.5, Prop.7.6]{Lanneau:Manh:H4}
(see also \cite[Th.7.2, Th.7.3]{Mc4}), we can prove

\begin{Proposition}
\label{prop:BM:transform:rule}
The Butterfly move $B_\infty$ is always realizable.
For $q\in \N$, the Butterfly move $B_q$ is realizable on the prototypical surface $X_D(w,h,t,e)$  if we have
\begin{equation*}
0 < \lambda q < \frac{w}{2} \Leftrightarrow (e+4qh)^2<D
\end{equation*}

The actions of the Butterfly moves on $\Pcal^A_D$ are given by
\begin{enumerate}
\item If $q\in \N$ then $B_q(w,h,t,e)=(w',h',t',e')$ where
$$\left\{\begin{array}{ccl}
e' & = & -e-4qh,\\
h' & = & \gcd(qh,w+qt)\\
\end{array}\right. $$
\item If $q=\infty$ then $B_\infty(w,h,t,e)=(w',h',t',e')$ where
$$\left\{\begin{array}{ccl}
e' & = & -e-4h,\\
h' & = & \gcd(t,h)\\
\end{array}\right. $$
\end{enumerate}
\end{Proposition}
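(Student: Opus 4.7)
The plan is to derive Proposition~\ref{prop:BM:transform:rule} as a specialization of the general admissibility condition (Lemma~\ref{lm:BM:admis:cond}) and the basis-change computation (Proposition~\ref{prop:BMpq:par:change}) to two distinguished choices of the primitive pair $(p,q) \in \Z^2$ describing the homology class $\alpha'_{1,j}=p\alpha_{2,j}+q\beta_{2,j}$ of the core curve of the new simple cylinder.

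First, I would pin down which $(p,q)$ corresponds to each Butterfly move. By the definition preceding the proposition, $B_q$ with $q\in\N$ is the move whose new simple cylinder has core curve of slope $(1,q)$ in $C_{2,1}$, so $(p,q)=(1,q)$. The move $B_\infty$ corresponds to the vertical core curve inside $C_{2,1}$, i.e.\ $(p,q)=(0,1)$. In both cases one checks $\gcd(p,q)=1$ and $q>0$, so Proposition~\ref{prop:BMpq:par:change} applies.

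Next, I would verify the realizability statements. The admissibility condition of Lemma~\ref{lm:BM:admis:cond} reads $0<\lambda q<w/2$, equivalently $(e+4qh)^2<D$. For $B_q$ with $q\in\N$ this is precisely the condition stated. For $B_\infty$ we have $q=1$, so the condition becomes $0<\lambda<w/2$, which is the defining inequality of $\Pcal^A_D$ recorded in Remark~\ref{rmk:cond:prot:A:B}; hence $B_\infty$ is realizable on every prototype in $\Pcal^A_D$ without any further restriction.

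Finally, I would read off the new prototype from Proposition~\ref{prop:BMpq:par:change}, which gives $e'=-e-4qh$ and $h'=\gcd(-qh,\,pw+qt)$. Substituting $(p,q)=(1,q)$ yields $h'=\gcd(qh,w+qt)$ and $e'=-e-4qh$; substituting $(p,q)=(0,1)$ yields $h'=\gcd(h,t)=\gcd(t,h)$ and $e'=-e-4h$. The signs inside the $\gcd$ are absorbed since $\gcd$ is taken up to sign, and positivity of $h'$ follows from $h>0$. I expect the only real obstacle is bookkeeping: making sure the orientation and sign conventions used to identify $(p,q)$ agree with those fixed in Proposition~\ref{prop:BMpq:par:change} so that the transformation formulas come out with the stated signs; the genuine geometric and algebraic content has already been carried out in the preceding two results.
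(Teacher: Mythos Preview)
Your proposal is correct and is exactly the approach the paper intends: it derives Proposition~\ref{prop:BM:transform:rule} by specializing Lemma~\ref{lm:BM:admis:cond} and Proposition~\ref{prop:BMpq:par:change} to $(p,q)=(1,q)$ for $B_q$ and $(p,q)=(0,1)$ for $B_\infty$, just as the paper indicates by citing the analogous results in \cite{Lanneau:Manh:H4} and \cite{Mc4}. Your substitutions and the observation that the $B_\infty$ admissibility condition $0<\lambda<w/2$ is precisely the Model~A inequality of Remark~\ref{rmk:cond:prot:A:B} are all correct.
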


Lemma~\ref{lm:4cyl:dec:exist} and Proposition~\ref{prop:H6:noModA} imply the following
\begin{Theorem}
\label{theo:onto:map}
Let  $D$ be  a fixed  positive integer.  If $D\not =  5$ then there is an onto map  from $\Pcal^A_D$ on
the components of $\Omega E_{D}(6)$. \medskip

\noindent Let $\sim$ be the equivalence relation on $\Pcal^A_D$ that is generated by
the Butterfly moves $B_q$, that is $p\sim p'$ if and only if there is a sequence of Butterfly moves that send $p$ to $p'$.
Then we have
$$
 \#\   \{\textrm{Components   of }   \Omega    E_D(6)\}   \leq
\#\ \left(\Pcal^A_D/\sim\right).
$$
\end{Theorem}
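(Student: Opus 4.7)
The plan is to construct a natural map $\Pi : \Pcal^A_D \to \pi_0(\Omega E_D(6))$ by sending a prototype $(w,h,t,e)$ to the connected component containing the associated prototypical surface $X_D(w,h,t,e)$. The converse assertion in Proposition~\ref{prop:normalize:A} guarantees that $X_D(w,h,t,e)$ genuinely lies in $\Omega E_D(6)$, so $\Pi$ is well-defined.

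To establish surjectivity of $\Pi$ under the assumption $D\neq 5$, I would fix an arbitrary component $\Cc$ of $\Omega E_D(6)$ and pick any $(X,\omega)\in \Cc$. By Lemma~\ref{lm:4cyl:dec:exist}, $(X,\omega)$ is horizontally periodic (after a rotation) with a decomposition into $4$ cylinders. Since $D\neq 5$, Proposition~\ref{prop:H6:noModA} supplies a periodic direction in which the $4$-cylinder decomposition is of Model A; rotating $(X,\omega)$ to put this direction horizontally keeps us in the same $\GL^+(2,\R)$-orbit, hence in $\Cc$. Proposition~\ref{prop:normalize:A} then produces, via a further $\GL^+(2,\R)$ action and Dehn twists, a prototype $(w,h,t,e)\in \Pcal^A_D$ such that $X_D(w,h,t,e)$ is $\GL^+(2,\R)$-equivalent to $(X,\omega)$. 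Therefore $\Pi(w,h,t,e)=\Cc$, proving surjectivity.

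For the inequality, it suffices to show that $\Pi$ descends to the quotient $\Pcal^A_D/\!\sim$, that is, that equivalent prototypes have the same image. By construction of the equivalence relation, this reduces to the case of two prototypes $p,p'\in \Pcal^A_D$ related by a single Butterfly move $B_q$. By Proposition~\ref{prop:BM:transform:rule}, whenever $B_q$ is admissible on $X_D(p)$, the move corresponds to selecting a new periodic direction on the same underlying surface $X_D(p)$ (the direction of the simple cylinders $C'_1,C'_2$), which carries a $4$-cylinder decomposition of Model A; normalizing this new decomposition by $\GL^+(2,\R)$ and Dehn twists returns, by the uniqueness in Proposition~\ref{prop:normalize:A}, precisely the prototypical surface $X_D(p')$. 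Hence $X_D(p)$ and $X_D(p')$ lie in a common $\GL^+(2,\R)$-orbit, so $\Pi(p)=\Pi(p')$. It follows that $\Pi$ factors as a surjection $\bar\Pi : \Pcal^A_D/\!\sim\ \twoheadrightarrow \pi_0(\Omega E_D(6))$, yielding
\[
\#\{\text{Components of } \Omega E_D(6)\} \;\leq\; \#\bigl(\Pcal^A_D/\!\sim\bigr).
\]

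No significant obstacle is anticipated: the argument is essentially a bookkeeping consequence of the earlier results. The only point requiring a bit of care is verifying that the process of "rotate to horizontal, then $\GL^+(2,\R)$-normalize, then Dehn twist" is compatible with the Butterfly move identifications — this is exactly the content of Proposition~\ref{prop:BMpq:par:change} together with the uniqueness built into Proposition~\ref{prop:normalize:A}.
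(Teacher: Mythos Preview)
Your proposal is correct and takes essentially the same approach as the paper, which simply records that the theorem is implied by Lemma~\ref{lm:4cyl:dec:exist} and Proposition~\ref{prop:H6:noModA}. One minor remark: Proposition~\ref{prop:normalize:A} does not assert any uniqueness, but you do not actually need it, since the Butterfly move is by construction a change of periodic direction on the \emph{same} surface, so $X_D(p)$ and $X_D(B_q(p))$ automatically lie in a common $\GL^+(2,\R)$-orbit.
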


An equivalence class  of the equivalence  relation generated  by  the
Butterfly moves will be called a  {\it  component} of $\Pcal^A_D$.

\subsection{Reduced prototypes and almost reduced prototypes}\label{sec:red:prototypes}
A {\em reduced  prototype} in $\Pcal^A_D$ is a  prototype $(w,h,t,e) \in \Pcal^A_D$ where $h=1$, and $t=0$.
The set of reduced prototypes of a discriminant $D$ is denoted by $\Sc^1_D$.

When $D\equiv 1 \mod 8$, we will also use the set
$$
\Sc_D^2  = \{ (w,h,t,e) \in \Pcal^A_D, \, h = 2, \, t=0, \, w \textrm{ is even}\}.
$$
Elements of $\Sc_D^2$ will be called {\em almost-reduced  prototypes}.
We close this section by the following
\begin{Lemma}\label{lm:reduced}\hfill
\begin{enumerate}
\item If $D\not\equiv 1 \mod 8$ then any element of $\Pcal^A_D$ is equivalent to an element of $\Sc^1_D$.

\item If $D\equiv 1 \mod$, then any element of $\Pcal_D^A$ is equivalent to either an element of $\Sc_D^1$ or an element of $\Sc_D^2$.
\end{enumerate}

\end{Lemma}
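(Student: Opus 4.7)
The plan is to select, in each Butterfly equivalence class, a representative $(w_0,h_0,t_0,e_0) \in \Pcal_D^A$ with $h_0$ minimal, and then pin down the possible values of $h_0$ using specific Butterfly moves. A first observation is that $t_0 = 0$: by Proposition~\ref{prop:BM:transform:rule}, $B_\infty$ is always admissible and sends $h$ to $\gcd(t,h)$, so if $t_0 > 0$ the inequality $0 \leq t_0 < \gcd(w_0, h_0) \leq h_0$ would yield $\gcd(t_0, h_0) \leq t_0 < h_0$, contradicting the minimality of $h_0$.

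Next I would show that $h_0 \mid w_0$. The Model A condition $\lambda < w_0/2$ from Remark~\ref{rmk:cond:prot:A:B} is equivalent to $w_0 > 2 e_0 + 4 h_0$, which is exactly the admissibility condition $(e_0 + 4 h_0)^2 < D$ of the move $B_1$. Applying $B_1$ produces a new $h$-parameter equal to $\gcd(h_0, w_0 + t_0) = \gcd(h_0, w_0)$; minimality forces this to be $\geq h_0$, so $h_0 \mid w_0$. Combined with the primitivity condition $\gcd(w_0, h_0, e_0) = 1$ (using $t_0=0$), this gives $\gcd(h_0, e_0) = 1$.

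The key step is to iterate. The intermediate prototype $B_1(w_0,h_0,0,e_0)$ has the form $(w_1, h_0, t_1, e_1)$ with $e_1 = -e_0 - 4 h_0$ and $w_1 = (D - e_1^2)/(4 h_0) = w_0 - 2 e_0 - 4 h_0$. Minimality of $h_0$ forces $t_1 = 0$, for otherwise $B_\infty$ applied to this intermediate prototype would give an $h$-value $\gcd(t_1, h_0) \leq t_1 < h_0$. The admissibility of a second $B_1$ reduces to $(e_1 + 4 h_0)^2 = e_0^2 < D$, which always holds. The new $h$-parameter is then $\gcd(h_0, w_1) = \gcd(h_0, w_0 - 2 e_0 - 4 h_0) = \gcd(h_0, 2 e_0) = \gcd(h_0, 2)$, where the last equality uses $\gcd(h_0, e_0) = 1$. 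Minimality then forces $h_0 \leq \gcd(h_0, 2) \leq 2$, so $h_0 \in \{1, 2\}$.

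Finally, $h_0 = 2$ forces $e_0$ odd (from $\gcd(h_0, e_0) = 1$) and $w_0$ even (from $h_0 \mid w_0$), whence $D = e_0^2 + 8 w_0 \equiv 1 \pmod 8$. So when $D \not\equiv 1 \pmod 8$ we must have $h_0 = 1$, producing a reduced prototype in $\Sc^1_D$ and yielding statement (1); when $D \equiv 1 \pmod 8$ both values $h_0 \in \{1,2\}$ can arise, producing either a reduced or an almost-reduced prototype and yielding statement (2). The most delicate point is the justification that $t_1 = 0$ after the first $B_1$, which re-invokes the minimality of $h_0$ through $B_\infty$ applied to the intermediate prototype rather than to the starting one.
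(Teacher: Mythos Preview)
Your proof is correct and follows the same approach as the paper's: pick a minimal-$h$ representative, use $B_\infty$ to force $t_0=0$, apply $B_1$ to get $h_0\mid w_0$, then apply $B_1$ again to the resulting prototype to obtain $h_0\mid 2e_0$. Your endgame is slightly cleaner---deducing $\gcd(h_0,e_0)=1$ from $h_0\mid w_0$ and primitivity gives $\gcd(h_0,2e_0)=\gcd(h_0,2)\leq 2$ directly, avoiding the paper's case split on $D\bmod 8$ until the final parity check---and you make explicit the step $t_1=0$ (via $B_\infty$ on the intermediate prototype) that the paper uses but glosses over.
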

\begin{proof}
Let $p_0=(w_0,h_0,t_0,e_0)$ be an element in the equivalence class of $p$ such that $h_0$ is minimal.
Since the Butterfly move $B_\infty$ is always admissible, we must have $h_0 \leq \gcd(t_0,h_0)$.
But $t_0< h_0$, therefore $t_0=0$.
Applying the Butterfly move $B_1$ (which is always admissible), we get $h_0 \leq  \gcd(h_0,w_0)$,
which implies that $h_0\mid w_0$.

Let $(w'_0,h'_0,t'_0,e'_0)=B_1(w_0,h_0,t_0,e_0)$. We have $h'_0=h_0$, and $e'_0=-e_0-4h_0$.
It follows that $w'_0=\frac{D-(e_0+4h_0)^2}{4h_0}=w_0-2e_0-4h_0$.
The same argument as above shows that we must have $h_0 \mid w'_0$, which implies $h_0\mid 2e_0$.

We first consider the case $D\not\equiv 1\mod 8$, which means that $d \equiv 0,4,5 \mod 8$.
If $D$ is even then so is $e_0$. If $h_0$ is also even then $2 \mid\gcd(w_0,h_0,e_0)$,
which is impossible since $\gcd(w_0,h_0,e_0)=1$ by the definition of prototype.
Thus $h_0$ must be odd. Since $h_0\mid 2e_0$, we draw that $h_0\mid e_0$.
Hence $h_0=\gcd(w_0,h_0,e_0)=1$, and $(w_0,h_0,t_0,e_0) \in \Sc^1_D$.

If $D\equiv 5\mod 8$, then since $e^2_0\equiv 1\mod 8$, we have $w_0h_0=\frac{D-e_0^2}{4}$ is odd,
which implies that $h_0$ is odd. The same argument as above shows that $h_0=1$ and $(w_0,h_0,t_0,e_0)\in \Sc^1_D$.

We now consider the case $D\equiv 1\mod 8$. If $h_0$ is odd, since $h_0 \mid 2e_0$, we must have $h_0\mid e_0$.
Hence $h_0=\gcd(w_0,h_0,e_0)=1$, and $p_0\in \Sc_D^1$.
If $h_0$ is even then $h_0/2 \mid e_0$, thus $h_0/2=\gcd(w_0,h_0,e_0)=1$.
Therefore, we have $h_0=2$. Since $2|w_0$, we have $p_0 \in \Sc^2_D$.
\end{proof}


\section{Components of $\Pcal^A_D$}
\subsection{Disconnectedness of $\Pcal^A_D$}.

The following lemmas show that $\Pcal^A_D$ have more than one component in general.
\begin{Lemma}\label{lm:no:conn:PAD:even}
If $D\geq 20$ is an even discriminant, that is $D\equiv 0 \mod 4$ then $\Pcal^A_D$ has at least two components.
\end{Lemma}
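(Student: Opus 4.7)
The plan is to exhibit an invariant of the Butterfly-move equivalence relation that takes at least two distinct values on $\Pcal^A_D$ when $D$ is even. The natural candidate is the residue $e \bmod 4$. Since $D \equiv 0 \mod 4$ and $D=e^2+4wh$, we immediately get $e^2 \equiv 0 \mod 4$, hence $e$ is even, so $e \bmod 4 \in \{0,2\}$.

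The first step is to verify that this residue is preserved by the Butterfly moves. By Proposition~\ref{prop:BM:transform:rule}, we have $e'=-e-4qh$ under $B_q$ ($q \in \N$) and $e'=-e-4h$ under $B_\infty$; in either case $e' \equiv -e \mod 4$. Since $-0 \equiv 0$ and $-2 \equiv 2 \mod 4$, the map $e \mapsto -e$ fixes both even residue classes, so $e \bmod 4$ is indeed an invariant of $\sim$ on $\Pcal^A_D$ when $D$ is even.

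The second step is to realize both values, i.e. to exhibit prototypes $p_1,p_2 \in \Pcal^A_D$ with $e(p_1) \equiv 0$ and $e(p_2) \equiv 2 \mod 4$. I would take
\[
p_1 = (D/4,\, 1,\, 0,\, 0) \quad \text{and} \quad p_2 = ((D-4)/4,\, 1,\, 0,\, -2).
\]
Both are integer quadruples since $D \equiv 0 \mod 4$, and the defining conditions $(\Pcal_D)$ are routine to check: $\gcd(w,h,t,e)=1$ is immediate because $h=1$, and $t=0 < \gcd(w,1)=1$. For the Model $A$ inequality $(e+4h)^2 < D$ (cf.\ Remark~\ref{rmk:cond:prot:A:B}), for $p_1$ it reads $16 < D$, and for $p_2$ it reads $4 < D$; both hold for $D \geq 20$. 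Finally, the conditions $0 < \lambda < w$ and $\lambda \neq w/2$ follow from the Model $A$ inequality combined with $D > e^2$.

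Since the two prototypes lie in different classes mod the invariant $e \bmod 4$, they belong to distinct components of $\Pcal^A_D$. The only subtlety is the existence/admissibility check for the prototypes, which is a short calculation; the conceptual content is entirely in the parity invariance of $e$ under Butterfly moves.
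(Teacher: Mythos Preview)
Your proof is correct and follows essentially the same approach as the paper's: both use the invariance of $e \bmod 4$ under the Butterfly moves (via Proposition~\ref{prop:BM:transform:rule}) and exhibit the same two reduced prototypes $(D/4,1,0,0)$ and $((D-4)/4,1,0,-2)$ to show that both residue classes are realized. Your version is a bit more careful in checking the prototype conditions explicitly, but the argument is identical.
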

\begin{proof}
Let $p=(w,h,t,e) \in \Pcal^A_D$ be a prototype. Since $D- e^2=4wh$, $e$ must be even, that is $e \equiv 0,2 \mod 4$.
Assume that $p$ is mapped by some Butterfly move $B_q$ to another prototype $p'=(w',h',t',e')$. Then by Proposition~\ref{prop:BM:transform:rule},
we must have $e'\equiv e \mod 4$.
Thus,  $p_1=(\frac{D-4}{4}, 1,0,-2)$ and $p_2=(\frac{D}{4},1,0,0)$ cannot belong to the same equivalence class of $\sim$.
\end{proof}

\begin{Lemma}\label{lm:D1mod8:S1:no:connect:S2}
Let $D>9$ be a discriminant such that $D\equiv 1 \mod 8$.
Let $p_0=(w_0,h_0,t_0,e_0)$ be an element of $\Pcal^A_D$ such that $w_0\equiv h_0\equiv t_0 \equiv 0 \mod 2$.
If the prototype $p_1=(w_1,h_1,t_1,e_1) \in \Pcal_D^A$ satisfies
$\left(\begin{smallmatrix} w_1 & t_1 \\ 0 & h_1 \end{smallmatrix}\right)
\not\equiv \left(\begin{smallmatrix} 0 & 0 \\ 0 & 0 \end{smallmatrix}\right) \mod 2$,
then $p_1$ is not contained in the equivalence class of $p_0$.
\end{Lemma}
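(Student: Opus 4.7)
The plan is to exhibit a parity invariant of the Butterfly moves that separates the class of $p_0$ from $p_1$: namely, the property that $w$, $h$, $t$ are all even. Because $D\equiv 1 \mod 8$ and $4wh = D-e^2$, the integer $e$ must be odd for every prototype in $\Pcal_D$, so this condition is exactly equivalent to $M(p)\equiv 0\mod 2$. Once its invariance under every Butterfly move is established, $p_0$ and $p_1$ are forced into different equivalence classes under $\sim$.

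To verify invariance, I would take $p=(w,h,t,e)\in\Pcal^A_D$ with $w$, $h$, $t$ all even, apply $B_q$ for $q\in\N\cup\{\infty\}$, and check that the new prototype $(w',h',t',e')$ still has $w'$, $h'$, $t'$ even. The parity of $e$ is preserved: $e' = -e - 4qh \equiv e \mod 2$, still odd. The entry $h' = \gcd(qh,\, w+qt)$ (respectively $\gcd(t,h)$ for $B_\infty$) is even since both arguments are, so write $h' = 2h''$. For $w'$ I would use the identity $4w'h' = D - (e')^2 = 4h(w-2qe-4q^2h)$: setting $h = 2h_0$ and extracting a second factor of $2$ from the bracket (each summand being even) yields
\[ w' \;=\; 2\,h_0\,\bigl(w/2 - qe - 2q^2 h\bigr)/h'', \]
and since $w'\in\Z$ by Proposition~\ref{prop:normalize:A}, this expression forces $w'$ to be even.

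The delicate point is $t'$, which Proposition~\ref{prop:BM:transform:rule} does not record. I would retrieve the necessary formula from the proof of Proposition~\ref{prop:BMpq:par:change}: one has $t'\equiv b' \pmod{h'}$ with $b' = (a - nc)v + (b - nd)y$ for some $n\in\Z$ and $(v,y)$ coming from an auxiliary matrix $A\in\SL(2,\Z)$, where
\[ (a,b,c,d) \;=\; \bigl(sh,\; -4qh - rw - st - 2e,\; -qh,\; pw + qt\bigr). \]
Under the all-even hypothesis on $(w,h,t)$, each of $a$, $b$, $c$, $d$ is even (the summands $-rw$, $-st$, $-2e$, $-4qh$ being automatically even). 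Hence $b'$ is even, and reducing modulo the even $h'$, followed by a further reduction modulo $\gcd(w',h')$ (also even, since both $w'$ and $h'$ are), preserves evenness and yields an even $t'$. The main obstacle is precisely the implicit nature of $t'$ in the transformation rules; it dissolves once one observes that $b'$ is a $\Z$-linear combination of the all-even quantities $a$, $b$, $c$, $d$, so its evenness is automatic regardless of the choices of $n$, $v$, $y$, and $A$. The case $B_\infty$ is handled by the same formulas with $(p,r,q,s) = (0,-1,1,s)$.
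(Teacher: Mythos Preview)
Your approach is the same as the paper's: show that the condition ``$w,h,t$ all even'' (equivalently, the top-right $2\times 2$ block of $T_p$ vanishes modulo $2$) is preserved by every Butterfly move. Your arguments for $e'$ and $h'$ are fine. However, the step for $w'$ is not justified: from
\[
w' \;=\; \frac{2\,h_0\bigl(w/2 - qe - 2q^2 h\bigr)}{h''}\quad\text{and}\quad w'\in\Z
\]
you \emph{cannot} conclude that $w'$ is even. An integer of the form $2M/N$ need not be even (e.g.\ $2\cdot 3/2=3$). Worse, your $t'$ argument then invokes ``$\gcd(w',h')$ is even since both $w'$ and $h'$ are'', so the gap propagates.

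The repair is already in your hands. The formula you quote from Proposition~\ref{prop:BMpq:par:change} gives not just $t'$ but the whole top-right block of $T_{p'}$:
\[
\begin{pmatrix} w' & t' \\ 0 & h' \end{pmatrix}
\;=\;
\begin{pmatrix} 1 & -n \\ 0 & 1 \end{pmatrix}
\begin{pmatrix} a & b \\ c & d \end{pmatrix}
A,
\qquad A\in\SL(2,\Z).
\]
Since $a=sh$, $b=-4qh-rw-st-2e$, $c=-qh$, $d=pw+qt$ are all even under your hypothesis, every entry of this product is even; hence $w',h',t'$ are all even in one stroke. No separate argument for $w'$, and no reduction modulo $h'$ or $\gcd(w',h')$, is needed. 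This is precisely what the paper does, only phrased as a single conjugation: it checks that $M^{-1}T_{p_0}M$ (with $M=M_1M_2M_3$) still has vanishing off-diagonal $2\times2$ blocks modulo $2$, which via $T_{p_1}=\pm T'_{p_0}+f\,\Id_4$ forces $\left(\begin{smallmatrix} w_1 & t_1\\ 0 & h_1\end{smallmatrix}\right)\equiv 0\bmod 2$.
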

\begin{proof}
For any   element $p=(w,h,t,e)$  of $\Pcal_D^A$, let us denote by $T_p$ the generator of $\Ord_D$
associated to $p$. The matrix of $T_p$ in the basis of $H_1^-(X,\Z)$ adapted
to the corresponding cylinder decomposition is given by
$T_p=\left(
\begin{smallmatrix}
 e & 0 & w & t\\
 0 & e & 0 & h \\
 h & -t & 0 & 0 \\
 0 & w & 0 & 0 \\
\end{smallmatrix}
\right)$ (see Proposition~\ref{prop:normalize:A}). In particular, the matrix of the generator of $\Ord_D$ associated to $p_0$ satisfies
$T_{p_0} \equiv \left(\begin{smallmatrix} \Id_2 & 0 \\ 0 & 0 \end{smallmatrix}\right) \mod 2$.

Let  $p'=(w',e',h',t') \in \Pcal_D^A$ be the prototype obtained from $p$ by an admissible Butterfly move
$B_{(m,n)}$. We claim that the matrix $T'_p$ of $T_p$ in the basis of $H_1^-(X,\Z)$ associated with $p'$ also satisfies
$T'_{p} \equiv \left(\begin{smallmatrix} \Id_2 & 0 \\ 0 & 0 \end{smallmatrix}\right) \mod 2$.
To see this, recall that by Proposition~\ref{prop:BMpq:par:change} the matrix of the basis change induced by the Butterfly move
is given by $M_1\cdot M_2\cdot M_3$, where
$$
M_1\in \left(
\begin{smallmatrix}
 \Id_2 & 0 \\
 0     & \SL(2,\Z)\\
\end{smallmatrix}
\right), \quad M_2=\left(
\begin{smallmatrix}
 0 & 2 & 1 & 0 \\
 0 & 0 & 0 & 1 \\
 1 & 0 & 0 & 2 \\
 0 & 1 & 0 & 0 \\
\end{smallmatrix}
\right), \quad M_3\in \left(
\begin{smallmatrix}
 \begin{smallmatrix} 1 & * \\ 0 & 1 \end{smallmatrix} & 0 \\
 0 & \SL(2,\Z)\\
\end{smallmatrix}
\right).
$$
Since $T'_p=(M_1\cdot M_2\cdot M_3)^{-1} \cdot T_p \cdot M_1 \cdot M_2 \cdot M_3$, it is easy to check that
$T'_p\equiv \left(\begin{smallmatrix} \Id_2 & 0 \\ 0 & 0 \end{smallmatrix}\right) \mod 2$.

Now, assume that $p_0$ can be connected to $p_1=(w_1,h_1,t_1,e_1)$ by a sequence of Butterfly moves.
Let $T'_{p_0}$ be the matrix of $T_{p_0}$ in the basis adapted to $p_1$.
The previous claim implies that $T'_{p_0}\equiv \left(\begin{smallmatrix} \Id_2 & 0 \\ 0 & 0 \end{smallmatrix}\right) \mod 2$.
Since $T'_{p_0}$ and $T_{p_1}$ are both generators of $\Ord_D$, we must have
$T_{p_1}=\pm T'_{p_0}+f\Id_4$, with $f\in \Z$.
But this is impossible since the  top right $2\times 2$ submatrix of $T_{p_1}$
is equal to $\left(\begin{smallmatrix} w_1 & t_1 \\ 0 & h_1 \end{smallmatrix}\right) \not\equiv 0 \mod2$,
while the same submatrix of $\pm T'_{p_0}+f\Id_4$ is equal to $0$ modulo $2$.
This contradiction allows us to conclude.
\end{proof}

As a consequence of Lemma~\ref{lm:D1mod8:S1:no:connect:S2}, we get

\begin{Corollary}\label{cor:D1mod8:S1:no:connect:S2}
If $D\equiv 1 \mod 8$, then an element of $\Sc_D^1$ is not equivalent to any element of $\Sc_D^2$.
\end{Corollary}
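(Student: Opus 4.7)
The plan is to derive the corollary as an immediate consequence of Lemma~\ref{lm:D1mod8:S1:no:connect:S2}. Concretely, I would check that every prototype $p_0 \in \Sc_D^2$ plays the role of the distinguished prototype in the hypothesis of that lemma, while every prototype $p_1 \in \Sc_D^1$ falls under its conclusion. Once this alignment of definitions is verified, the corollary follows without any additional computation.

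First, by the definition of $\Sc_D^2$, any $p_0 = (w_0, h_0, t_0, e_0) \in \Sc_D^2$ satisfies $h_0 = 2$, $t_0 = 0$, and $w_0$ even. Hence $w_0 \equiv h_0 \equiv t_0 \equiv 0 \mod 2$, which is precisely the hypothesis imposed on $p_0$ in Lemma~\ref{lm:D1mod8:S1:no:connect:S2}. Second, for any $p_1 = (w_1, h_1, t_1, e_1) \in \Sc_D^1$, we have $h_1 = 1$ and $t_1 = 0$, so the top-right block
$$
\left(\begin{smallmatrix} w_1 & t_1 \\ 0 & h_1 \end{smallmatrix}\right) = \left(\begin{smallmatrix} w_1 & 0 \\ 0 & 1 \end{smallmatrix}\right)
$$
has its $(2,2)$-entry equal to $1$, and therefore is not congruent to the zero matrix modulo $2$. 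Consequently Lemma~\ref{lm:D1mod8:S1:no:connect:S2} applies and yields $p_1 \not\sim p_0$.

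Since $p_0 \in \Sc_D^2$ and $p_1 \in \Sc_D^1$ were arbitrary, the corollary follows. The only thing to watch for in the write-up is that the equivalence relation is symmetric, so no generality is lost by placing the ``all-even'' prototype in the slot $p_0$ of the lemma rather than the slot $p_1$. There is no substantive obstacle beyond this bookkeeping; all the real content, namely the mod-$2$ invariance of the top-right $2 \times 2$ block of $T_p$ under Butterfly moves, was already carried out in the proof of Lemma~\ref{lm:D1mod8:S1:no:connect:S2}.
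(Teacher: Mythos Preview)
Your proposal is correct and matches the paper's approach: the corollary is stated there simply ``as a consequence of Lemma~\ref{lm:D1mod8:S1:no:connect:S2}'', and your write-up just spells out the routine verification that elements of $\Sc_D^2$ have $w,h,t$ all even while elements of $\Sc_D^1$ have $h=1$, so the lemma applies directly.
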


The following theorem shows that essentially, that is for $D$ large enough, $\Pcal^A_D$ does not have other components than the ones mentioned in Lemmas~\ref{lm:no:conn:PAD:even} and~\ref{lm:D1mod8:S1:no:connect:S2}.

\begin{Theorem}[Components of $\Pcal^A_D$]
\label{theo:H6:connect:PD}
Let $D\geq 4$  be a  discriminant. Assume that
$$
D \not \in \mathrm{Exc}_1:=\{4,5,8,9,12,16,17, 25, 33, 36,41, 49,52,68,84,100\}
$$
and
$$
D \not \in \mathrm{Exc}_2:=\{113,145, 153, 177, 209,265,313,481\}.
$$
Then the space ${\mathcal P}^A_{D}$ is non empty and has
\begin{enumerate}
\item one component if $D\equiv 5\mod 8$,

\item two components, $\{(w,h,t,e) \in {\mathcal P}^A_{D},\ e\equiv 0 \mod 4\}$ and $\{(w,h,t,e)\in {\mathcal P}^A_{D},\ e\equiv 2 \mod 4\}$, if $D\equiv 0,4 \mod 8$,
\item two components $\allowbreak \Pcal^{A_1}_D:=\{(w,h,t,e)\in {\mathcal P}^A_{D},\  (w,h,t) \not\equiv (0,0,0) \mod 2\}$ and $\Pcal^{A_2}_D:=\{(w,h,t,e) \in {\mathcal P}^A_{D},\ w\equiv h \equiv t \equiv 0 \mod 2\}$, if $D\equiv 1 \mod 8$.
\end{enumerate}
For $D\in \mathrm{Exc}_1$, we have
\begin{itemize}
\item If $D\in\{4,5,9\}$ then ${\mathcal P}^A_{D}$ is empty.
\item if $D\in \{8,12,16,17, 25, 33, 49\}$ then $\Pcal^A_D$ has only one component.
\item If $D \in \{36,41,52,68,84,100\}$, then ${\mathcal P}^A_{D}$ has three components.
\end{itemize}
For $D\in \mathrm{Exc}_2$, ${\mathcal P}^A_{D}$ has three components and ${\mathcal P}^{A_1}_{D}$ is connected.
\end{Theorem}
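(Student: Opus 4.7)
The plan is to combine the disconnectedness already established in Lemmas~\ref{lm:no:conn:PAD:even} and~\ref{lm:D1mod8:S1:no:connect:S2} (together with Corollary~\ref{cor:D1mod8:S1:no:connect:S2}) with a direct study of the action of the Butterfly moves on the set $\Sc^1_D\cup\Sc^2_D$ of reduced and almost-reduced prototypes introduced in Section~\ref{sec:red:prototypes}. By Lemma~\ref{lm:reduced} every $\sim$-class of $\Pcal^A_D$ meets this set, so the remaining content of the theorem is to show that any two elements of $\Sc^1_D\cup\Sc^2_D$ lying in the same class announced in (1)--(3) can be joined by a sequence of Butterfly moves. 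In particular, non-emptiness follows once one exhibits at least one reduced prototype in each announced class, which is a direct feasibility check on the inequalities $(\Pcal_D)$.

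My first step is to parametrize $\Sc^1_D$ and $\Sc^2_D$ explicitly. A reduced prototype has the form $(w,1,0,e)$ with $w=(D-e^2)/4$, and its membership in $\Pcal^A_D$ is equivalent, via Remark~\ref{rmk:cond:prot:A:B}, to the single inequality $(e+4)^2<D$. Hence $\Sc^1_D$ is in bijection with the set of integers $e$ satisfying $(e+4)^2<D$ and $e\equiv D\pmod 2$. Similarly, an almost-reduced prototype $(w,2,0,e)\in\Sc^2_D$ (existing only when $D\equiv 1\pmod 8$) corresponds to $e$ with $w=(D-e^2)/8$ even and $(e+8)^2<D$. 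The parity obstruction of Lemma~\ref{lm:no:conn:PAD:even} then translates into the congruence class of $e$ modulo $4$, while the obstruction of Lemma~\ref{lm:D1mod8:S1:no:connect:S2} separates $\Sc^1_D$ from $\Sc^2_D$.

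Following the scheme used in \cite[\S 8]{Lanneau:Manh:H4} and \cite[\S 7]{Mc4}, I next write down the explicit action of the Butterfly moves $B_q$ and $B_\infty$ on reduced prototypes and, in each case, a short sequence of auxiliary moves that re-normalizes the output back into $\Sc^1_D$. A direct calculation with Proposition~\ref{prop:BM:transform:rule} shows that the transition on the $e$-parameter has the form $e\mapsto -e-4q$, and that the admissibility window of $B_q$ at $(w,1,0,e)$ is $(e+4q)^2<D$, i.e.\ an interval in $q$ of length $\sim\sqrt{D}/2$. For $D$ large enough this window is wide, and one can show by a descent argument that any reduced prototype can be connected by Butterfly moves to one with $|e|$ bounded by an absolute constant, where the relevant congruence class of $e\bmod 4$ (or the $\Sc^1_D$ vs.\ $\Sc^2_D$ dichotomy) is the only invariant of the $\sim$-class. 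The same argument, with the analogous re-normalization in $\Sc^2_D$, yields the connectedness of $\Pcal^{A_2}_D$ when $D\equiv 1\pmod 8$. A refined count of the cases $D\equiv 0,4,5\pmod 8$ completes the generic statement.

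The main obstacle is the bookkeeping of the re-reduction step: the immediate output of $B_q$ on a reduced prototype generically has $h'=\gcd(q,w)\geq 1$ and must be pushed back into $\Sc^1_D$ by further Butterfly moves, which forces one to control the admissibility of these auxiliary moves simultaneously with the main descent and to argue that the composite transition on $e$ remains usable. Quantifying the threshold on $D$ beyond which the descent never stalls gives precisely the set $\mathrm{Exc}_1\cup\mathrm{Exc}_2$: for each exceptional $D$ the admissibility window is too narrow at some point of the descent, so connectedness must be verified by hand. I would treat these values by a finite computation: enumerate $\Pcal^A_D$, build the graph whose edges are the admissible Butterfly moves given by Proposition~\ref{prop:BM:transform:rule}, and read off its connected components. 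The values $D\in\{4,5,9\}$ for which $\Pcal^A_D=\emptyset$ are immediate from the defining inequalities together with Proposition~\ref{prop:H6:noModA}, and the handful of values producing three components correspond to isolated reduced prototypes with no room for any non-trivial Butterfly move.
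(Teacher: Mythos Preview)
Your outline follows the same architecture as the paper's proof in Appendix~\ref{sec:prf:connect:PD}: reduce to $\Sc^1_D\cup\Sc^2_D$ via Lemma~\ref{lm:reduced}, then analyse the Butterfly-move dynamics on the $e$-parameter. Two points, however, are genuine gaps rather than bookkeeping.

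First, your assertion that ``the relevant congruence class of $e\bmod 4$ \dots\ is the only invariant of the $\sim$-class'' is false if one restricts to moves that send reduced prototypes back to reduced prototypes. The composite $B_\infty B_q$ on $[e]\in\Sc^1_D$ lands in $\Sc^1_D$ only when $\gcd(q,w)=1$; for odd $q$ this yields $e\mapsto e+4(q-1)\equiv e\pmod 8$, so the class of $e\bmod 8$, not $e\bmod 4$, is preserved. Indeed the paper first proves (Theorem~\ref{theo:H6:connect:SD}) that $\Sc^1_D$ generically has \emph{three} components when $D\equiv 4\pmod 8$ and \emph{two} when $D\equiv 1\pmod 8$, strictly more than the final count in (2)--(3). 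To collapse these one must exhibit explicit detours through \emph{non-reduced} prototypes with $h\in\{2,4\}$: for instance
\[
[2]\overset{B_2}{\longrightarrow}(4k-12,2,1,-10)\overset{B_2}{\longrightarrow}(4k-4,2,1,-6)\overset{B_1}{\longrightarrow}[-2]
\]
when $D=4+32k$. Your re-normalisation scheme, which pushes every output back into $\Sc^1_D$ before the next step, does not produce these paths, and without them the argument terminates at a partition strictly finer than the one claimed.

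Second, the ``descent to bounded $|e|$'' is not automatic. Returning to $\Sc^1_D$ after $B_q$ requires $\gcd(q,w)=1$; if every small $q$ divides $w$, the admissibility window $(e+4q)^2<D$ may close before a coprime $q$ is found. The paper handles this with Kanold's bound on the Jacobsthal function (Proposition~\ref{prop:ends}, Lemma~\ref{eq:claim}) together with a congruence analysis modulo $105=3\cdot5\cdot7$ (Proposition~\ref{prop:equiv:step8}) establishing $e\sim e+8h$ in the interior of $\Sc^h_D$. These number-theoretic inputs are the actual content of the ``$D$ large enough'' threshold and are not supplied by a generic descent remark.
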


To prove this theorem, we use similar ideas to the proof of \cite[Th.8.6]{Lanneau:Manh:H4}. Even though there are some new technical difficulties related to the fact that when $D\equiv 1 \mod 8$, $\Pcal^A_D$ has two types of reduced prototypes $\Sc_D^1$ and $\Sc_D^2$, the same strategy actually allows us to get the desired conclusion. Theorem~\ref{theo:H6:connect:PD} is proved in details in Appendix~\ref{sec:prf:connect:PD}.

\section{Detecting prototypes using areas}
For our purpose, it is important to determine the prototype associated with a periodic direction.
While in principle it is possible to obtain all the parameters of the corresponding prototype,  the calculations could be quite complicated in practice. However, the following lemma shows that the parameter $e$ can be easily computed from the area of a cylinder in the direction under consideration.

\begin{Lemma}
\label{lm:cyl}
Let $(X,\omega)\in \Omega E_{D}(6)$ be a Prym eigenform with a semi-simple cylinder $\mathcal C$. Then
there is $g\in \mathrm{GL}^{+}(2,\R)$ such that $g\cdot(X,\omega)=X_D(w,h,t,e)$ and
$$
\mathrm{Area}(\mathcal C) = \mathrm{Area}(X,\omega) \cdot \frac{1}{2} \cdot \frac{\lambda}{\sqrt{D}}
$$
If $\mathcal{C}$ is simple then $(w,h,t,e) \in \Pcal^A_D$, and if $\mathcal{C}$ is strictly semi-simple $(w,h,t,e) \in \Pcal^B_D$.
In particular, if $(X,\omega)=X_D(w,h,t,e)\in \Omega E_{D}(6)$ is a Prym eigenform with a non-horizontal semi-simple cylinder $\mathcal C$, then
there is $g\in \mathrm{GL}^{+}(2,\R)$ such that $g\cdot(X,\omega)=X_D(w',h',t',e')$, with $(w',h',t',e')\in \Pcal_D$ and
$$
\mathrm{Area}(\mathcal C) =\frac{\lambda\cdot  \lambda'}{4}.
$$
\end{Lemma}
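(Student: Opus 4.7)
The plan is to rotate the surface so that the core of $\mathcal{C}$ is horizontal, invoke Proposition~\ref{prop:normalize:A} to put $(X,\omega)$ into prototypical form with $\mathcal{C}$ identified as one of the two ``small'' cylinders $C_{1,j}$, and then read off both the area of $\mathcal{C}$ and the total area directly from the absolute period data given there.

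First, since $(X,\omega)$ is a Veech surface, I may use $\GL^+(2,\R)$ to put the core curve of $\mathcal{C}$ in the horizontal direction. The argument of Lemma~\ref{lm:4cyl:dec:exist} shows that no horizontal cylinder in a $2$-cylinder decomposition of a surface in $\Omega E_D(6)$ can be semi-simple: in such a decomposition each cylinder contains a saddle connection common to both its top and bottom boundary, which forces both boundary components to be made of several saddle connections. Hence the horizontal decomposition here must have $4$ cylinders. Applying Proposition~\ref{prop:normalize:A}, after further action by $\GL^+(2,\R)$ and Dehn twists, I obtain $(X,\omega) = X_D(w,h,t,e)$ with the basis of Figure~\ref{fig:H6:ModAB}. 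Both diagrams show that the only horizontal cylinders of $X_D(w,h,t,e)$ which are semi-simple are $C_{1,1}$ and $C_{1,2}$ (the long cylinders $C_{2,j}$ carry several horizontal saddle connections on each of their boundaries), so up to the Prym involution I may take $\mathcal{C} = C_{1,1}$; whether $\mathcal{C}$ is simple or strictly semi-simple then dictates whether the decomposition is of Model $A$ or $B$, hence whether $(w,h,t,e) \in \Pcal_D^A$ or $\Pcal_D^B$.

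Second, I extract the two areas from Proposition~\ref{prop:normalize:A}. Since $\alpha_{1,1}, \beta_{1,1}$ form a core/transverse pair for $C_{1,1}$ and $\omega(\Z\alpha_{1,1}+\Z\beta_{1,1}) = \tfrac{\lambda}{2}\Z^2$, the area of $\mathcal{C}=C_{1,1}$ equals $\lambda^2/4$; similarly the pair $\alpha_{2,1},\beta_{2,1}$ yields $\mathrm{Area}(C_{2,1}) = wh/4$. Summing over the four cylinders using $\tau$-symmetry gives $\mathrm{Area}(X,\omega)=\tfrac{1}{2}(\lambda^2+wh)$; combining the identities $\lambda^2-e\lambda-wh=0$ (i.e.\ $\lambda$ is the positive root of $x^2-ex-wh$) and $\sqrt{D}=2\lambda-e$ yields $\lambda^2+wh=\sqrt{D}\,\lambda$, so $\mathrm{Area}(X,\omega)=\tfrac{\sqrt{D}\,\lambda}{2}$ and the ratio equals $\lambda/(2\sqrt{D})$ as required.

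The ``In particular'' statement is then immediate: applying the main claim to a non-horizontal semi-simple $\mathcal{C}$ on $X_D(w,h,t,e)$ produces $g$ with $g\cdot(X,\omega) = X_D(w',h',t',e')$ and $\mathrm{Area}(\mathcal{C})/\mathrm{Area}(X,\omega) = \lambda'/(2\sqrt{D})$ (the ratio is $\GL^+(2,\R)$-invariant); multiplying by $\mathrm{Area}(X_D(w,h,t,e)) = \sqrt{D}\,\lambda/2$ yields $\mathrm{Area}(\mathcal{C}) = \lambda\lambda'/4$. The only point in the above requiring more than a routine check is the identification of the semi-simple horizontal cylinders of $X_D(w,h,t,e)$ as being precisely $C_{1,1}$ and $C_{1,2}$, which amounts to a direct inspection of the two diagrams in Figure~\ref{fig:H6:ModAB}; everything else reduces to the two short algebraic identities satisfied by $\lambda$.
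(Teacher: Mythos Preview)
Your proof is correct and follows essentially the same approach as the paper's: rotate $\mathcal{C}$ to be horizontal, apply Proposition~\ref{prop:normalize:A} to reach the prototypical form, and then compute both $\mathrm{Area}(\mathcal{C})=\lambda^2/4$ and $\mathrm{Area}(X,\omega)=\tfrac{1}{2}(\lambda^2+wh)=\tfrac{\lambda}{2}\sqrt{D}$ from the period data. The paper's proof is terser—it does not spell out why the horizontal direction must be a $4$-cylinder decomposition, nor the ``in particular'' step—so your added details (the $2$-cylinder exclusion via Lemma~\ref{lm:4cyl:dec:exist} and the $\GL^+(2,\R)$-invariance of the area ratio) are welcome but do not change the argument.
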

\begin{proof}[Proof of Lemma~\ref{lm:cyl}]
We only give the proof for the case $\mathcal C$ is a simple cylinder as the case $\mathcal{C}$ is strictly semi-simple follows from the same arguments.

Up to the action of $\mathrm{SL}(2,\R)$
one can assume that $\mathcal C$ is horizontal.
By Proposition~\ref{prop:normalize:A} there is an element $g= \left( \begin{smallmatrix} * & * \\ 0 & * \end{smallmatrix}\right) \in \GL^+(2,\R)$
such that $(Y,\eta)=g\cdot(X,\omega)=X_D(w,h,t,e)$.
In particular $g(\mathcal C)$ is a square of dimension $\frac1{2}\lambda$,
thus $\mathrm{Area}(g(\mathcal C))= \mathrm{Area}(\mathcal C)\cdot \det(g) =\frac1{4}\lambda^2$. On the other hand
$$
\Aa(Y,\eta)=\frac1{2} \cdot \left( \lambda^2 + wh\right) =\frac1{2} \cdot \left( \lambda^2 + \lambda^2-e\lambda\right)
=\frac{\lambda}{2} \cdot \left( 2\lambda-e\right) =\frac{\lambda}{2} \cdot \sqrt{D}
$$
Since $\Aa(Y,\eta)=\det(g)\cdot\Aa(X,\omega)$, the lemma follows.
\end{proof}

\begin{Proposition}
\label{prop:square:tiled}
A surface $(X,\omega)\in \Omega E_{D}(6)$ is square-tiled if and only if $D$ is a square, that is $D=d^{2}$.
Moreover if $(X,\omega)$ is primitive, made of $n$ squares, then $n=2d$.
\end{Proposition}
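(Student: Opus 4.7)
The argument has three steps, each built on the prototype structure from Proposition~\ref{prop:normalize:A}.

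I would first determine when the $\GL^+(2,\R)$-orbit of a surface $(X,\omega) \in \Omega E_D(6)$ contains a square-tiled representative. By Lemma~\ref{lm:4cyl:dec:exist}, together with Proposition~\ref{prop:H6:noModA} for the exceptional case $D = 5$, this orbit contains a prototype $X_D(w,h,t,e)$ whose (absolute and relative) period lattice, read off from Proposition~\ref{prop:normalize:A}(4), is
$$ L \ = \ \Z \cdot \tfrac{\lambda}{2} \ + \ \Z \cdot \tfrac{i\lambda}{2} \ + \ \Z \cdot \tfrac{w}{2} \ + \ \Z \cdot \tfrac{t+ih}{2} \ \subset \ \C. $$
The orbit admits a square-tiled representative if and only if $L$ is a rank-$2$ subgroup of $\C \simeq \R^2$. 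Since $w, t, h \in \Z$, this reduces to requiring $\lambda \in \Q$; and because $\lambda = (e + \sqrt{D})/2$ with $e \in \Z$, that is equivalent to $\sqrt{D} \in \Q$, i.e.\ $D = d^2$.

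Next, assuming $D = d^2$, I would compute the covolume of $L$. The scaled lattice $2L$ lies in $\Z^2$, so its covolume equals the gcd of the $2 \times 2$ minors of its generator matrix, namely
$$ \mathrm{covol}(2L) \ = \ \gcd(\lambda^{2},\ \lambda w,\ \lambda h,\ \lambda t,\ wh) \ = \ \gcd(\lambda k,\ wh), \qquad k := \gcd(\lambda, w, h, t). $$
The relation $T\omega = \lambda \omega$ combined with the explicit matrix of $T$ in Proposition~\ref{prop:normalize:A}(2) yields the identity $wh = \lambda(\lambda - e)$, from which $\lambda \mid wh$ and $wh/\lambda = \lambda - e$. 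Writing $\gcd(\lambda k, wh) = \lambda m$, the integer $m$ divides both $k$ and $\lambda - e$; any common prime of $k$ and $\lambda - e$ would divide $\lambda, w, h, t$ and hence $e$, contradicting the primitivity condition $\gcd(w, h, t, e) = 1$. Therefore $m = 1$ and $\mathrm{covol}(L) = \mathrm{covol}(2L)/4 = \lambda/4$.

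Finally, for a primitive square-tiled $(X,\omega) \in \Omega E_{d^{2}}(6)$ with $n$ squares, pick $g \in \GL^+(2,\R)$ with $g \cdot (X,\omega) = X_D(w,h,t,e)$. Primitivity says the period lattice of $(X,\omega)$ is $\Z + i\Z$, so $g$ carries this lattice onto $L$ and $\det g = \mathrm{covol}(L) = \lambda/4$. Combined with $\Aa(X_D) = \lambda\sqrt{D}/2 = \lambda d/2$ (as computed in the proof of Lemma~\ref{lm:cyl}) and the area-scaling identity $\Aa(X_D) = n \cdot \det g$, one obtains $n = (\lambda d/2)/(\lambda/4) = 2d$.

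The main obstacle is the arithmetic identity $\gcd(\lambda k, wh) = \lambda$ in the second step, which requires the interplay between the characteristic-polynomial identity $\lambda(\lambda - e) = wh$ and the primitivity condition $\gcd(w, h, t, e) = 1$; steps one and three are essentially structural bookkeeping from Proposition~\ref{prop:normalize:A}.
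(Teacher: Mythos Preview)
Your argument is correct and takes a genuinely different route from the paper's. The paper first invokes Lemma~\ref{lm:reduced} to reduce to a (almost-)reduced prototype $p\in\Sc^1_D\cup\Sc^2_D$, and then for each of these two cases writes down an explicit diagonal matrix $B$ that carries $X_D(p)$ to a primitive square-tiled surface, whose area is then computed directly. Your approach instead computes the covolume of the full period lattice $L$ for an \emph{arbitrary} prototype, via the gcd-of-minors formula and the arithmetic identity $\gcd(k,\lambda-e)=1$ (which you extract neatly from $\lambda(\lambda-e)=wh$ together with $\gcd(w,h,t,e)=1$). This is more uniform: it avoids the case split between $\Sc^1_D$ and $\Sc^2_D$ and does not rely on Lemma~\ref{lm:reduced} at all. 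The paper's approach, on the other hand, has the advantage of producing the primitive square-tiled model explicitly, which can be useful elsewhere. One small remark: your citation of Proposition~\ref{prop:H6:noModA} in the first step is unnecessary --- Proposition~\ref{prop:normalize:A} already covers both Model~A and Model~B decompositions, so Lemma~\ref{lm:4cyl:dec:exist} together with Proposition~\ref{prop:normalize:A} suffices for every $D$.
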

\begin{proof}
The first assertion is obvious.
Let us prove the second one.
Since $D=d^2$, we have $D \neq 5$, and Proposition~\ref{prop:H6:noModA} implies that $(X,\omega)$ belongs to the  $\GL^+(2,\R)$-orbit of a prototypical surface $X_D(w,h,t,e)$, with $p=(w,h,t,e) \in \Pcal^A_D$.
By Lemma~\ref{lm:reduced} we can suppose that $p$ is either reduced or almost-reduced.

Let us consider the case $p$ is reduced, that is  $p=(w,1,0,e)$.
Note that $X_D(w,1,0,e)$ is not a primitive square-tiled surface,
since we have $\lambda=\frac{e+d}{2}$, while $w=\frac{d+e}{2}\cdot\frac{d-e}{2}$.
Let $B=\left( \begin{matrix} 2/\lambda & 0 \\ 0 & 2\end{matrix}\right)$.
Then $(Y,\eta) = B\cdot X_D(w,1,0,e)$ is clearly primitive.
A simple computation shows
$$
\Aa(Y,\eta) = 2(\lambda + \lambda - e) = 2(d+e-e)=2d.
$$
which means that $(Y,\eta)$ is made of $2d$ squares.
Since
$$
\Z\oplus i\Z = \Lambda(B\cdot A\cdot (X,\omega))=B\cdot A\cdot\Lambda(X,\omega) = B\cdot A\cdot \Z\oplus i\Z
$$
the matrix $B\cdot A$ has determinant $1$. Hence $\Aa(X,\omega)=\Aa(Y,\eta)=2d$, that is $(X,\omega)$ is also made of $2d$ squares.

Assume now that $p$ is almost-reduced, that is $p=(w,2,0,e)$, where $w$ is even and $e$ is odd.
In this case $\Aa(X_D(w,2,0,e))=d\frac{e+d}{4}$.
To get a primitive square-tiled surface, we have to rescale $X_D(w,2,0,e)$ either by $\left(\begin{smallmatrix}\frac{4}{e+d} & 0 \\ 0 & 2 \end{smallmatrix}\right)$ if $\frac{e+d}{2}$ is odd, or by $\left(\begin{smallmatrix}\frac{8}{e+d} & 0 \\ 0 & 1 \end{smallmatrix}\right)$ if $\frac{e+d}{2}$ is even (which is equivalent to $\frac{d-e}{2}$ is odd).
In both cases, the resulting surface consists of exactly $2d$ squares.
\end{proof}
This proposition allows us to reformulate Lemma~\ref{lm:cyl} in the case $D$ is a square as follows
\begin{Corollary}
\label{cor:cylinder:lambda}
Let $(X,\omega)\in \Omega E_{D}(6)$ be a square-tiled surface with $D=d^2$.
Let $\Cc$ be a simple cylinder on $X$, and $(w,h,t,e)$
be the prototype associated to the cylinder decomposition
in the direction of $\Cc$.
Then there is $g\in \mathrm{GL}^{+}(2,\R)$ such that
$g\cdot(X,\omega)$ is a primitive square-tiled surface and
$$
\mathrm{Area}(g(\mathcal C)) = \lambda=\frac{d+e}{2}.
$$
\end{Corollary}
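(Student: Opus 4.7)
\textbf{Proof plan for Corollary~\ref{cor:cylinder:lambda}.} The strategy is to combine the area formula of Lemma~\ref{lm:cyl} with the area count of Proposition~\ref{prop:square:tiled}, without having to construct $g$ explicitly. Since $D = d^2$ is a square we have $D \neq 5$, so Proposition~\ref{prop:H6:noModA} applies and the prototype $(w,h,t,e)$ associated with the direction of the simple cylinder $\mathcal{C}$ indeed lies in $\Pcal^A_D$.

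First, I would apply Lemma~\ref{lm:cyl} directly to $\mathcal{C}$ to obtain the intrinsic identity
\[
\mathrm{Area}(\mathcal{C}) \;=\; \mathrm{Area}(X,\omega) \cdot \frac{1}{2}\cdot \frac{\lambda}{\sqrt{D}} \;=\; \mathrm{Area}(X,\omega) \cdot \frac{\lambda}{2d}.
\]
Separately, since $(X,\omega)$ is square-tiled, the period lattice $\Lambda(X,\omega)$ is a full-rank sublattice of $\Z\oplus i\Z$; choosing $M \in \mathrm{GL}(2,\Z)$ with positive determinant such that $M(\Z\oplus i\Z) = \Lambda(X,\omega)$ and setting $g := M^{-1} \in \mathrm{GL}^+(2,\R)$, the surface $g\cdot(X,\omega)$ has period lattice exactly $\Z\oplus i\Z$ and is therefore primitive square-tiled. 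By Proposition~\ref{prop:square:tiled} its area equals $2d$, so $\det(g) = 2d/\mathrm{Area}(X,\omega)$.

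Combining the two identities I obtain
\[
\mathrm{Area}(g(\mathcal{C})) \;=\; \det(g) \cdot \mathrm{Area}(\mathcal{C}) \;=\; \frac{2d}{\mathrm{Area}(X,\omega)} \cdot \mathrm{Area}(X,\omega) \cdot \frac{\lambda}{2d} \;=\; \lambda \;=\; \frac{d+e}{2},
\]
where the last equality uses $\sqrt{D}=d$. This is exactly the claimed formula. No genuine obstacle arises: once Lemma~\ref{lm:cyl} and Proposition~\ref{prop:square:tiled} are in hand, the argument is a direct comparison of areas together with the elementary observation that any square-tiled surface can be rescaled to a primitive one by an integral matrix.
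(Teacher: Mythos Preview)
Your argument is correct and is exactly the reformulation the paper intends: combine the intrinsic area identity of Lemma~\ref{lm:cyl} with the area count of Proposition~\ref{prop:square:tiled}, and rescale. One notational slip to fix: when you write ``$M \in \GL(2,\Z)$ with positive determinant such that $M(\Z\oplus i\Z) = \Lambda(X,\omega)$'', you mean an integer matrix $M \in M_2(\Z)\cap\GL^+(2,\R)$ (with $\det M$ equal to the index $[\Z^2:\Lambda(X,\omega)]$), not an element of $\GL(2,\Z)$, since the latter would force $\det M=1$ and hence $\Lambda(X,\omega)=\Z^2$ already; with that correction the construction of $g=M^{-1}$ and the computation go through verbatim.
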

\section{Switching Model B to Model A}
To prove Theorem~\ref{thm:H6:eig:comp}, assuming that $D>5$, we need to show that the all the prototypical surfaces with  prototype in $\Pcal^A_D$ belong to the same $\GL^+(2,\R)$-orbit. For $D$ even (resp. $D\equiv 1 \mod 8$) and large enough, by Theorem~\ref{theo:H6:connect:PD}, we know that $\Pcal^A_D$ has two components, which means that we can not connect two prototypes in different components by using Butterfly moves.
Therefore, we need other moves to connect prototypes in $\Pcal^A_D$.
For that purpose, we will make use of prototypes in $\Pcal^B_D$.

Analogous to the Butterfly moves, we define the {\em Switch moves} $S_i, i \in \{1,2,3,4\}$,
from decompositions of type $B$ to decomposition of type $A$. They induce transformations
on the set of prototypes: $S_i:\Pcal^B_D \rightarrow \Pcal^A_D$.
The following proposition gives the admissibility conditions of the Switch moves.
\begin{Proposition}\label{prop:switch:move}
Let $(X,\omega)=X_D(w,h,0,e)$ be a surface with model $B$, that is $(w,h,0,e) \in \Pcal_D^\mathcal{B}$.
\begin{enumerate}
\item If $2h+e-w<0$ then the direction $\theta_1$ of slope $\frac{\lambda+h}{\lambda}$ on $(X,\omega)$
is a periodic direction of Model~A with prototype $S_1(w,h,0,e)=(w_1,h_1,t_1,e_1)$ satisfying
$$
e_1=3e-2w+4h.
$$
\item If $w-e-h < \lambda$ then the direction $\theta_2$  of slope $-\frac{\lambda+h}{\lambda}$ on $(X,\omega)$
is a periodic direction of Model~A with prototype $S_2(w,h,0,e)=(w_2,h_2,t_2,e_2)$ satisfying
$$
e_2=3e-2w+2h.
$$
\item If $3h+3e/2-w<0$ then the direction $\theta_3$  of slope $\cfrac{2\lambda+3h}{\lambda}$ on $(X,\omega)$
is a periodic direction of Model~A with prototype $S_3(w,h,0,e)=(w_3,h_3,t_3,e_3)$ satisfying
$$
e_3=7e+12h-4w.
$$
\item If $w-e-h < \lambda/2$ then the direction $\theta_4$ of slope $-2\frac{\lambda+h}{\lambda}$ on $(X,\omega)$
is a periodic direction of Model~A with prototype $S_4(w,h,0,e)=(w_4,h_4,t_4,e_4)$ satisfying
$$
e_4=5e-4w+4h.
$$
\end{enumerate}
\end{Proposition}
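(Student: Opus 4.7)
My plan is to handle all four switch moves with the same three-step strategy; the computation of the parameter $e_i$ will be driven by the area formula of Lemma~\ref{lm:cyl}. Throughout, fix the prototypical Model~B surface $(X,\omega)=X_D(w,h,0,e)$ drawn as in the right-hand picture of Figure~\ref{fig:H6:ModAB}, so that its four horizontal cylinders have widths $\lambda/2$ (for the two simple-looking square cylinders) and $w/2$ (for the two strictly semi-simple ones), and total area $\text{Area}(X,\omega)=\lambda\sqrt{D}/2$.

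First, for each $i\in\{1,2,3,4\}$ and for the prescribed slope of $\theta_i$, I would exhibit explicitly a core saddle connection $\gamma_i$ of a simple cylinder $\mathcal{C}_i$ in direction $\theta_i$, i.e.\ a straight segment starting at the unique regular zero, crossing a prescribed sequence of the horizontal cylinders, and returning to the same zero. I expect the four stated admissibility inequalities to be exactly the geometric conditions that guarantee $\gamma_i$ closes up without meeting the second singular point or wrapping around one extra time. This step is the main obstacle; it is essentially a case-by-case inspection of the Model~B picture for the four slopes $\pm(\lambda+h)/\lambda$, $(2\lambda+3h)/\lambda$ and $-2(\lambda+h)/\lambda$, and it is where the specific combinatorics for each $S_i$ is actually used.

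Second, since $(X,\omega)$ is a Veech surface, the periodic direction $\theta_i$ produces a cylinder decomposition, and the presence of the simple cylinder $\mathcal{C}_i$ forces, in view of Remark~\ref{rmk:cond:prot:A:B}, this decomposition to be of Model~A. Hence it is associated with a well-defined prototype $S_i(w,h,0,e)=(w_i,h_i,t_i,e_i)\in\Pcal^A_D$.

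Finally, directly from the picture one computes $\text{Area}(\mathcal{C}_i)$ as a polynomial in $\lambda,w,h$ (each crossing of a horizontal cylinder by $\gamma_i$ contributes a product of a segment of $\gamma_i$ with the local width). The second assertion of Lemma~\ref{lm:cyl} then gives $\text{Area}(\mathcal{C}_i)=\lambda\lambda_i/4$, which determines $\lambda_i$, and consequently
\[
e_i \;=\; 2\lambda_i-\sqrt{D} \;=\; 2\lambda_i-(2\lambda-e).
\]
Using the identity $\lambda^2=e\lambda+wh$ that follows from $D=e^2+4wh$, this collapses to the advertised formulas $e_1=3e-2w+4h$, $e_2=3e-2w+2h$, $e_3=7e+12h-4w$, $e_4=5e-4w+4h$. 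One further checks that the admissibility inequality in each case is equivalent either to $\lambda_i<\lambda$ (for $S_1$ and $S_3$) or to $\lambda_i>0$ (for $S_2$ and $S_4$), which is exactly what is needed for $\gamma_i$ to be realised as the core of a simple cylinder inside the Model~B picture and for the resulting prototype to satisfy $\lambda_i<w_i/2$.
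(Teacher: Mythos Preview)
Your proposal is correct and follows essentially the same approach as the paper: for each $S_i$, exhibit a simple cylinder $\mathcal{C}_i$ in direction $\theta_i$ (verifying that the stated inequality is exactly the geometric existence condition from the Model~B picture), deduce that the resulting decomposition is of Model~A, and then read off $e_i$ from $\mathrm{Area}(\mathcal{C}_i)=\lambda\lambda_i/4$ via Lemma~\ref{lm:cyl} together with $\lambda^2=e\lambda+wh$. The paper carries out explicitly the height and area computations you leave to inspection; your closing remark that the admissibility inequality is equivalent to $\lambda_i<\lambda$ (for $S_1,S_3$) or $\lambda_i>0$ (for $S_2,S_4$) is a tidy way to package what the paper does case by case.
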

\begin{proof}[Proof of Proposition~\ref{prop:switch:move}]
We first assume $2h+e-w<0$. Clearly, the cylinder $\mathcal C_1$ in direction
$\theta_1$ as shown in Figure~\ref{fig:eigf:ModB:direc1} does exist if and only if
the quantity $y_1=(\lambda-w/2)\cdot \mathrm{slope}(\theta_1)$ satisfies $y_1<\lambda/2$
(and in this case $y_1$ is the height of $\mathcal C_1$).
A straightforward computation gives (recall that $wh=\lambda^2-e\lambda$):
$$
 y_1= (\lambda-w/2)\cdot \frac{\lambda+h}{\lambda} = \frac{2\lambda^2+(2h-w)\lambda-hw}{2\lambda} = \frac{2\lambda^2+(2h-w)\lambda-(\lambda^2-e\lambda)}{2\lambda}=
\frac{\lambda+(2h+e-w)}{2}.
$$
The assumption implies $y_1<\lambda/2$, thus there is a simple cylinder $\mathcal C_1$ and the direction $\theta_1$ is of Model~A.

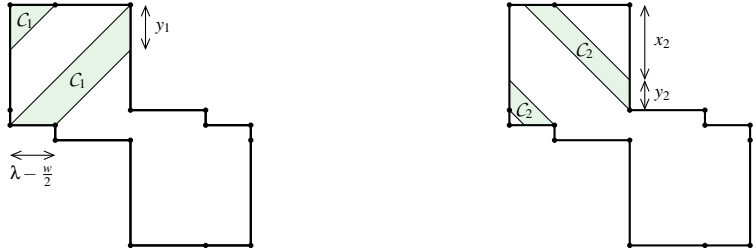
\begin{figure}[htb]
\begin{minipage}[l]{0.23\linewidth}
\centering
\begin{tikzpicture}[scale=0.2]
 \draw[thick] (0,16) -- (0,8) -- (3,8) --  (3,7) -- (8,7) -- (8,0) -- (16,0) -- (16,8) -- (13,8) -- (13,9) -- (8,9) -- (8,16) -- cycle;
\fill[yellow!10!green!10] (0,13) -- (0,16) -- (3,16) -- cycle;
\fill[yellow!10!green!10] (0,8) -- (8,16) -- (8,13) -- (3,8) -- cycle;

 \draw[thin, <->, >= angle 45] (9,16) -- (9,13);

 \draw (9,14.5) node[right] {\tiny $y_1$};

\draw[thick] (0,16) -- (0,8) -- (3,8) --  (3,7) -- (8,7) -- (8,0) -- (16,0) -- (16,8) -- (13,8) -- (13,9) -- (8,9) -- (8,16) -- cycle;

\draw[thin] (0,13) -- (3,16); (0,12) -- (4,16)  (0,11) -- (5,16) (0,10) -- (6,16)  (0,8) -- (8,16);
\draw[thin] (3,8) -- (8,13) ;
\draw[thin]     (0,8) -- (8,16);
 \foreach \x in {(0,16), (0,9), (0,8), (3,16), (3,8), (3,7), (8,16), (8,9), (8,7), (8,0), (13,9), (13,8), (13,0), (16,8), (16,7), (16,0)} \filldraw[fill=black] \x circle (4pt);

 \draw (4.5,11) node {\tiny $\mathcal C_1$} (1,15) node {\tiny $\mathcal C_1$};

 \draw[thin, <->, >= angle 45] (0,6) -- (3,6);

 \draw (1.5,6) node[below] {\tiny $\lambda-\frac{w}{2}$};
\end{tikzpicture}
\end{minipage}
\hskip 30mm
\begin{minipage}[l]{0.23\linewidth}
\centering
\begin{tikzpicture}[scale=0.2]
\fill[yellow!10!green!10] (1,16) -- (8,9) -- (8,11) -- (3,16) -- cycle;
\fill[yellow!10!green!10] (0,11) -- (0,9) -- (1,8) -- (3,8) -- cycle;

\draw[thick] (0,16) -- (0,8) -- (3,8) --  (3,7) -- (8,7) -- (8,0) -- (16,0) -- (16,8) -- (13,8) -- (13,9) -- (8,9) -- (8,16) -- cycle;

 \draw[thin]   (0,11) -- (3,8) (0,9) -- (1,8);
 \draw[thin] (1,16) -- (8,9) (3,16) -- (8,11);

 \foreach \x in {(0,16), (0,9), (0,8), (3,16), (3,8), (3,7), (8,16), (8,9), (8,7), (8,0), (13,9), (13,8), (13,0), (16,8), (16,7), (16,0)} \filldraw[fill=black] \x circle (4pt);

 \draw (5,13) node {\tiny $\mathcal C_2$} (1,9) node {\tiny $\mathcal C_2$};
 \draw[thin, <->, >= angle 45] (9,16) -- (9,11);
 \draw[thin, <->, >= angle 45] (9,11) -- (9,9);
 \draw (9,10) node[right] {\tiny $y_2$} (9,13.5) node[right] {\tiny $x_2$};

%
\end{tikzpicture}
\end{minipage}
\caption{Prototypical surface $(X,\omega)=S_D(w,1,0,e) \in \Omega E_D(6)$ of model B.
Cylinders in direction $\theta_1$ (left) and $\theta_2$ (right) are represented by $\mathcal C_1$ and $\mathcal C_2$ respectively.}
 \label{fig:eigf:ModB:direc1}
\end{figure}
Now by Lemma~\ref{lm:cyl} we have $\mathrm{Area(\mathcal C_1)}=(\lambda-\frac{w}{2})(\frac{\lambda}{2}+\frac{h}{2})=\frac{\lambda\cdot \lambda_1}{4}$. Since
$$
\lambda\cdot \lambda_1 = (2\lambda-w)(\lambda +h) = 2\lambda^2 + 2\lambda h -w\lambda - wh = \lambda^2 + (2h+e-w)\lambda
$$
we draw
$$
\lambda_1 = \lambda + 2h+e -w.
$$
Substituting $2\lambda=e+\sqrt{D}$ and $2\lambda_1=e_1+\sqrt{D}$ we obtain $e_1=4h+3e-2w$ as desired. \medskip

We now turn to the second assertion. As above we claim that the cylinder $\mathcal C_2$ in direction
$\theta_2$ exists if and only if the quantity $x_2=-\left(\frac{w-\lambda}{2}\right)\cdot \mathrm{slope}(\theta_2)$ satisfies $x_2<\lambda/2$ (and in
this case $y_2=\lambda/2-x_2$ is the height of $\mathcal C_2$).
Again a straightforward computation gives:
$$
x_2=\frac{(\lambda-w)}{2}\frac{(\lambda+h)}{\lambda}=\frac{\lambda^2 +\lambda h-w\lambda-\lambda^2+e\lambda}{2\lambda} = \frac{w-e-h}{2}.
$$
The assumption $w-e-h < \lambda$ implies $x_2<\lambda/2$ and there is a cylinder
$\mathcal C_2$ as desired. Since $\mathcal C_2$ is a simple cylinder, the direction $\theta_2$ is of Model A.
Now by Lemma~\ref{lm:cyl} we have $\mathrm{Area(\mathcal C_2)}=\frac{\lambda}{2}(\frac{\lambda}{2}-x_2)=\frac{\lambda\cdot \lambda_2}{4}$,
and
$$
\lambda_2 = \lambda -2x_2 = \lambda -(w-e-h).
$$
Since $2\lambda=e+\sqrt{D}$ and $2\lambda_2=e_2+\sqrt{D}$ we obtain $e_2=2h+3e-2w$. \medskip

For the third move we refer to Figure~\ref{fig:eigf:ModB:direc3}, left. The cylinder $\mathcal C_3$ exists if and only if $y_3 < \lambda/2$. On the other hand
a simple computation gives
$$
y_3=(\lambda-\frac{w}{2})\cdot\mathrm{slope}(\theta_3) = \frac{\lambda}{2}+3h+\frac{3e}{2}-w
$$
By the assumption, we have $y_3 < \frac{\lambda}{2}$, hence $\Cc$ exists. Now by Lemma~\ref{lm:cyl} we have $\mathrm{Area(\mathcal C_3)}=(\lambda-\frac{w}{2})\cdot (2\frac{\lambda}{2} + 3\frac{h}{2})=\frac{\lambda\cdot \lambda_3}{4}$.
Hence
$$
\lambda\cdot \lambda_3 = (2\lambda-w)\cdot (2\lambda+3h) = 2\lambda(2\lambda+3h) -2w\lambda - 3\lambda^2 + 3e\lambda.
$$
We draw
$$
\lambda_3 =  \lambda+6h -2w + 3e
$$
Substituting $2\lambda=e+\sqrt{D}$ and $2\lambda_3=e_3+\sqrt{D}$ we obtain $e_3=7e+12h-4w$ as desired.
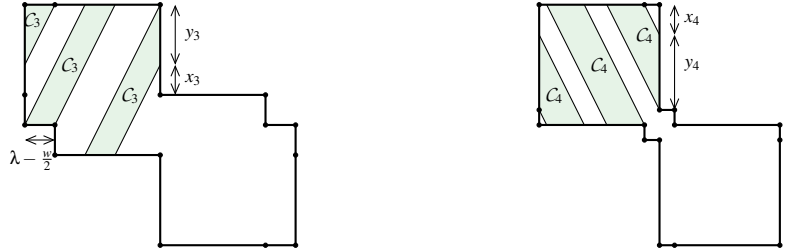
\begin{figure}[htb]
\begin{minipage}[l]{0.5\linewidth}
\centering
\begin{tikzpicture}[scale=0.2]
  \fill[yellow!10!green!10] (0,16) -- (0,12) -- (2,16) -- cycle;
  \fill[yellow!10!green!10] (0,8) -- (2,8) -- (6,16) -- (4,16) -- cycle;
  \fill[yellow!10!green!10] (4,6) -- (6,6) -- (9,12) -- (9,16) -- cycle;

 \draw[thick] (0,16) -- (0,8) -- (2,8) --  (2,6) -- (9,6) -- (9,0) -- (18,0) -- (18,8) -- (16,8) -- (16,10) -- (9,10) -- (9,16) -- cycle;
 \draw[thin] (0,12) -- (2,16) (0,8) -- (4,16) (2,8) -- (6,16)  (4,6) -- (9,16) (6,6) -- (9,12) ;
 \foreach \x in {(0,16), (0,10), (0,8), (2,16), (2,8), (2,6), (9,16), (9,10), (9,6), (9,0), (16,10), (16,8), (16,0), (18,8), (18,6), (18,0)} \filldraw[fill=black] \x circle (4pt);
  \foreach \x in {(0.5,15), (3,12), (7,10)} \draw \x node {\tiny $\mathcal C_3$};
  \draw[thin, <->, >= angle 45] (10,16) -- (10,12);
  \draw[thin, <->, >= angle 45] (10,12) -- (10,10);absence of such cylinders
  \draw (10,14) node[right] {\tiny $y_3$} (10,11) node[right] {\tiny $x_3$};
 \draw[thin, <->, >= angle 45] (0,7) -- (2,7); \draw (0.5,7) node[below] {\tiny $\lambda-\frac{w}{2}$};
\end{tikzpicture}
\end{minipage}
\hskip 10mm
\begin{minipage}[l]{0.23\linewidth}
\centering
\begin{tikzpicture}[scale=0.2]

 \fill[yellow!10!green!10] (0,14) -- (0,9) -- (0.5,8) -- (3,8) -- cycle;
 \fill[yellow!10!green!10] (0.5,16) -- (4.5,8) -- (7,8) -- (3,16) -- cycle;
 \fill[yellow!10!green!10] (4.5,16) -- (8,9) -- (8,14) -- (7,16) -- cycle;
 \draw[thick] (0,16) -- (0,8) -- (7,8) --  (7,7) -- (8,7) -- (8,0) -- (16,0) -- (16,8) -- (9,8) -- (9,9) -- (8,9) -- (8,16) -- cycle;
 \draw[thin]  (0,14) -- (3,8) (0,9) -- (0.5,8);
 \draw[thin] (0.5,16) -- (4.5,8) (3,16) -- (7,8) (4.5,16) -- (8,9) (7,16) -- (8,14) ;
 \foreach \x in {(0,16), (0,9), (0,8), (7,16), (7,8), (7,7), (8,16), (8,9), (8,7), (8,0), (9,9), (9,8), (9,0), (16,8), (16,7), (16,0)} \filldraw[fill=black] \x circle (4pt);
 \foreach \x in {(1,10), (4,12), (7,14)} \draw \x node {\tiny $\mathcal C_4$};
 \draw[thin, <->, >= angle 45] (9,16) -- (9,14);
 \draw[thin, <->, >= angle 45] (9,14) -- (9,9);
 \draw (9,12) node[right] {\tiny $y_4$} (9,15) node[right] {\tiny $x_4$};
\end{tikzpicture}
\end{minipage}
\caption{Cylinders in direction $\theta_3,\theta_4$: cylinders $\mathcal C_3,\mathcal C_4$ correspond to the shaded regions.}
\label{fig:eigf:ModB:direc3}
\end{figure}

We now turn to the last assertion. Applying the same remark as above,
the cylinder $\mathcal C_4$ as shown in Figure~\ref{fig:eigf:ModB:direc3} exists if and only if $x_4 < \lambda/2$.
On the other hand a simple computation gives $x_4=-\frac{w-\lambda}{2}\cdot\mathrm{slope}(\theta_4) = w-e-h$.
Thus by the assumption, $\Cc_4$ exists and the direction of $\Cc_4$ is of Model~A.

Now by Lemma~\ref{lm:cyl} we have $\mathrm{Area(\mathcal C_4)}=(\frac{\lambda}{2}-x_4)\cdot \frac{\lambda}{2}=\frac{\lambda\cdot \lambda_4}{4}$.
Hence
$$
\lambda_4 = 2 \cdot (\frac{\lambda}{2}-x_4).
$$
Substituting $2\lambda=e+\sqrt{D}$ and $2\lambda_4=e_4+\sqrt{D}$, we obtain $e_4=e-4x_4=5e-4w+4h$ as desired.
\end{proof}


\section{Proof of Theorem~\ref{thm:H6:eig:comp} for $D$ even and not a square}\label{sec:pf:H6:conn:D:even:n:sq}
In this section, we will show
\begin{Theorem}\label{thm:D:even:n:sq}
For any even discriminant $D\geq 8$ that is not a square, $\Omega E_D(6)$ is connected.
\end{Theorem}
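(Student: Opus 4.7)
The strategy, following the outline in the introduction, is to strengthen the Butterfly-move analysis of $\Pcal^A_D$ with the switch moves of Proposition~\ref{prop:switch:move}. By Theorem~\ref{theo:H6:connect:PD}, for $D$ even, non-square, $D\geq 8$, and $D\notin \mathrm{Exc}_1\cup\mathrm{Exc}_2$, the space $\Pcal^A_D$ has exactly two components, distinguished by the class of $e$ modulo $4$. Combining Theorem~\ref{theo:onto:map} with Proposition~\ref{prop:H6:noModA}, the number of $\GL^+(2,\R)$-orbits in $\Omega E_D(6)$ is bounded by the number of components of $\Pcal^A_D$. Hence, to prove connectedness, it suffices to exhibit a single Prym eigenform admitting two Model~A cylinder decompositions whose associated prototypes lie in distinct components of $\Pcal^A_D$.

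The mechanism is as follows. Given a prototype $(w,h,0,e)\in \Pcal^B_D$ on which both switch moves $S_1$ and $S_2$ are admissible, Proposition~\ref{prop:switch:move} produces two Model~A prototypes on the common surface $X_D(w,h,0,e)$ whose $e$-parameters differ by
\[
e_1 - e_2 \;=\; (3e-2w+4h)-(3e-2w+2h) \;=\; 2h.
\]
If $h$ is odd, these two values lie in different classes modulo $4$, hence in different components of $\Pcal^A_D$. The surface $X_D(w,h,0,e)$ then witnesses that the two associated $\GL^+(2,\R)$-orbits coincide, which forces $\Omega E_D(6)$ to be connected.

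Accordingly, I would construct, for every even non-square $D$ large enough, an explicit Model~B prototype of the form $(w,1,0,e)\in \Pcal^B_D$ with $e$ even, satisfying both admissibility conditions $w>e+2$ and $w-e-1<\lambda$. With $h=1$, the primitivity condition $\gcd(w,1,0,e)=1$ is automatic, and the defining inequalities of $\Pcal^B_D$ reduce to $D=e^2+4w$ with $e+1<w<2e+4$. A short arithmetic count of even values of $e$ for which $(D-e^2)/4$ lies in the Model~B window shows that such a prototype always exists once $D$ is large enough; the two admissibility inequalities are then a routine verification inside this window (the first is just a weakening of $\lambda>w/2$, the second reduces to a Model~B strict inequality).

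The main obstacle will be the finite list of small and exceptional $D$ for which the direct construction above fails or does not apply. There are three sub-cases: $D\in\{8,12\}$, where $\Pcal^A_D$ is already connected by Theorem~\ref{theo:H6:connect:PD} and nothing further is required; the three-component exceptional values $D\in\{52,68,84\}\subset\mathrm{Exc}_1$, where one must connect three rather than two components of $\Pcal^A_D$; and possibly a handful of mid-range $D$ for which the simple $h=1$ choice does not produce an admissible Model~B prototype. In each such case I would perform a direct, computer-assisted enumeration of Model~B prototypes of small height $h$, drawing on the full collection of switch moves $S_1,\dots,S_4$ (whose $e$-shifts jointly realize every admissible offset modulo $4$) combined with the Butterfly moves of Proposition~\ref{prop:BM:transform:rule}; since only finitely many values of $D$ are involved, the verification is finite.
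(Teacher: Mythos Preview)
Your overall strategy coincides with the paper's: pick the unique reduced Model~B prototype $(w,1,0,e)\in\Pcal^B_D$ determined by $e+2<\sqrt{D}<e+4$, apply two switch moves, and observe that the resulting $e$-parameters differ by $2$ modulo $4$. The gap is in your claim that the admissibility of $S_1$ and $S_2$ is a ``routine verification'' that only fails for ``a handful of mid-range $D$''.

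This is false. With $h=1$ there is exactly one even $e$ placing $(w,1,0,e)$ in $\Pcal^B_D$, so you cannot appeal to a count over several $e$'s. For that unique $e$, the $S_1$ condition $w>e+2$ is equivalent to $D>(e+2)^2+4$, and the $S_2$ condition $w-e-1<\lambda$ fails precisely when $D\ge (e+4)^2-4$. Neither is implied by the Model~B inequalities $e+1<w<2e+4$ (equivalently $w/2<\lambda<w$); your parenthetical that one is ``a weakening of $\lambda>w/2$'' and the other ``reduces to a Model~B strict inequality'' is incorrect. The failures occur exactly at the boundary values
\[
D=(e+2)^2+4 \qquad\text{and}\qquad D=(e+4)^2-4,
\]
and each of these is an \emph{infinite} family of even non-square discriminants (for even $e\ge 0$, neither $(e+2)^2+4$ nor $(e+4)^2-4$ is a perfect square). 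So relegating them to a finite computer check does not work.

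The paper handles precisely these two infinite boundary families with the additional switch moves: for $D=(e+4)^2-4$ it pairs $S_1$ with $S_3$ (giving $e_1-e_3\equiv 2\bmod 4$), and for $D=(e+2)^2+4$ it pairs $S_2$ with $S_4$ (giving $e_2-e_4=2$). Incorporating this case split into your argument---rather than treating the boundary as exceptional---closes the gap; the genuinely finite residue is then only $D\in\{52,68,84\}$, dealt with separately in the appendix.
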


By Theorems~\ref{theo:onto:map} and~\ref{theo:H6:connect:PD}, it is enough to find a surface  $(X,\omega) \in \Omega E_D(6)$,
on which there exist two periodic directions such that the corresponding cylinder decompositions are both in Model A,  and the associated prototypes $p_i=(w_i,h_i,t_i,e_i), \ i=1,2$, satisfy $e_1-e_2\equiv 2 \mod 4$.

Our strategy is to look for a prototypical surface $(X,\omega)=X_D(w,h,t,e)\in \Omega E_D(6)$ having two simple cylinders $\mathcal C_1,\mathcal C_2$ in two different directions, say $\theta_1$ and $\theta_2$, for which one has
$$
\frac{8}{\lambda}\left(\mathrm{Area}(\mathcal C_1) - \mathrm{Area}(\mathcal C_2)\right) \not \equiv 0 \mod 4, \text{ where } \lambda=\frac{e+\sqrt{D}}{2}.
$$
Indeed, the corresponding cylinder decompositions associated to $\theta_1,\theta_2$ are of Model A with prototypes
$(w_1,h_1,t_1,e_1)$ and $(w_2,h_2,t_2,e_2)$. By Lemma~\ref{lm:cyl} one has
$\mathrm{Area}(\mathcal C_1) - \mathrm{Area}(\mathcal C_2)=\lambda/8(e_1-e_2)$.
Theorem~\ref{theo:H6:connect:PD} then implies
that all the prototypical surfaces of Model A belong to the same $\GL^+(2,\R)$-orbit. Since any $\GL^+(2,\R)$-orbit contains
a prototypical surface of Model A (by Proposition~\ref{prop:H6:noModA}), this will prove the theorem.

To this end we will use Proposition~\ref{prop:switch:move}. We will find $(w,h,t,e)\in \Pcal^B_D$ such that
there are $i,j \in \{1,2,3,4\}$ for which $e_i-e_j\equiv 2 \mod 4$ where $S_i(w,h,t,e)=(w_i,h_i,t_i,e_i)$ and $S_j(w,h,t,e)=(w_j,h_j,t_j,e_j)$.

\begin{proof}[Proof of Theorem~\ref{thm:D:even:n:sq}]
For $D\in \{8,12\}$, the theorem follows from Theorem~\ref{theo:onto:map} and Theorem~\ref{theo:H6:connect:PD}.
From now on we assume that $D\geq 20$ is a non square even discriminant.

We first assume that $D$ is not an exceptional
discriminant in Theorem~\ref{theo:H6:connect:PD}, namely $D\not \in \{52,68,84\}$.
Since $D$ is not a square, there is a unique natural number $e$ such that $e+2 < \sqrt{D} < e+4$ and $D \equiv e \mod 2$.
Then $(w,h,t,e)=(\frac{D-e^2}{4},1,0,e) \in \Pcal_D$. The condition $e+2 < \sqrt{D} < e+4$ is
equivalent to $ w/2 < \lambda < w$ thus $(w,h,t,e) \in \Pcal^B_D$. Let $(X,\omega):=X_D(w,1,0,e)$. \medskip

In view of applying Proposition~\ref{prop:switch:move} we rewrite the admissibility conditions of $S_1,S_2$ in terms of $D$:
$$
\left\{
\begin{array}{lll}
(e+2)^2 + 4 < D & \iff & 2h+e-w<0 \\
(e+2)^2 < D< (e+4)^2-4 & \implies & w-e-h < \lambda
\end{array}
\right.
$$
%
%
%
Since $D$ is an even discriminant satisfying $(e+2)^2<D<(e+4)^2$, one of the following holds: \medskip

\noindent {\bf First case: $(e+2)^2+4 < D < (e+4)^2-4$}.\\
$S_1$ and $S_2$ are admissible and we have: $e_1=3e -2w+4h$ and $e_2=3e-2w+2h$. Since $h=1$, we have that $e_1 -e_2 \equiv 2 \mod 4$. \medskip

\noindent {\bf Second case: $D=(e+4)^2-4$}.\\
$S_1$ is admissible and $e_1=3e-2w+4$. Since $w=\frac{D-e^2}{4} = 2e+3$ we draw $e_1=-e-2$.\\
Now $3h+3e/2-w=-e/2<0$. Hence $S_3$ is also admissible. We obtain $e_3 = 7e+12h-4w \equiv -e \mod 4$.
Again this gives $e_1-e_3 \equiv 2  \mod 4$.\medskip

\noindent {\bf Third case: $D=(e+2)^2+4$}.\\
Since $(e+2)^2 < D < (e+4)^2-4$, the move $S_2$ is admissible, and $e_2=3e-2w+2$. Since $w=\frac{D-e^2}{4} = e+2$ we draw $e_2=e-2$.\\
Now,  $w-e-h=1<\lambda/2$, hence the move $S_4$ is also admissible
and $e_4=5e-4w+4h=e-4$. We conclude $e_2-e_4=2$. \medskip

It remains to prove the theorem for the three exceptional cases $D \in \{52,68,84\}$. This is discussed in detail in Appendix~\ref{sec:except:1}. The proof of Theorem~\ref{thm:D:even:n:sq} is now complete.
\end{proof}


\section{Proof of Theorem~\ref{thm:H6:eig:comp} for $D=d^2$, with $d$ even}\label{sec:main:thm:D:square:even}

We now provide a proof of Theorem~\ref{thm:H6:eig:comp}  when $D$ is a square and even.
\begin{Theorem}
\label{thm:D:square:even}
For any even discriminant $D=d^2$ where $d\geq 14$, the locus $\Omega E_{D}(6)$ is connected.
\end{Theorem}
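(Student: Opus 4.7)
The strategy follows the same template as Section~\ref{sec:pf:H6:conn:D:even:n:sq}: by Theorem~\ref{theo:H6:connect:PD} and Theorem~\ref{theo:onto:map}, it suffices to exhibit, for each even $d\geq 14$, a single surface $(X,\omega)\in\Omega E_{d^2}(6)$ admitting two simple cylinders $\Cc_1,\Cc_2$ in distinct periodic directions whose associated Model~A prototypes $(w_i,h_i,t_i,e_i)$ satisfy $e_1-e_2\equiv 2\mod 4$. Normalizing $(X,\omega)$ to be a primitive square-tiled surface with $2d$ unit squares (which is possible by Proposition~\ref{prop:square:tiled}), Corollary~\ref{cor:cylinder:lambda} identifies $\Area(\Cc_i)=(d+e_i)/2$, so the condition amounts to $\Area(\Cc_1)-\Area(\Cc_2)$ being an odd integer. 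Combined with the classification of components in Theorem~\ref{theo:H6:connect:PD}, this identifies the two components of $\Pcal^A_D$ within a single $\GL^+(2,\R)$-orbit.

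The new difficulty compared to the non-square even case is that the prototypes $(w,1,0,e)\in\Pcal^B_D$ used in Section~\ref{sec:pf:H6:conn:D:even:n:sq} no longer exist. Indeed, for $D=d^2$ with $d$ even, the relation $D-e^2=4wh$ forces $e$ even, while the condition $w/2<\lambda<w$ with $h=1$ requires $e=d-3$, which is odd. The plan is therefore to replace the Model~B prototypical surface used there by a $2$-cylinder decomposition of an element of $\Omega E_D(6)$, normalize it via $\GL^+(2,\R)$ to have cylinders of width and twist determined by two real parameters $x,t$ and height parameter $y$ (as in the normalization used in the proof of Proposition~\ref{prop:H6:noModA}), and then define adapted \emph{switch moves} going directly from the $2$-cylinder decomposition to $4$-cylinder decompositions of Model~A along carefully chosen slopes.

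Concretely, I would identify slopes of the type $\pm(y+1)/t$, $\pm(y+1)/(x-t)$, $\pm 2(y+1)/(x-t)$ on the $2$-cylinder model that produce a single simple cylinder with explicitly computable area, analogous to the directions $\theta_1,\theta_2,\theta_3,\theta_4$ in Proposition~\ref{prop:switch:move}. For each such slope, the admissibility condition (that the candidate cylinder be simple, not cross the horizontal separatrix) translates into an explicit inequality in $(x,y,t)$, and the area of the resulting simple cylinder is a rational function of the parameters. Choosing the initial prototypical $2$-cylinder surface as a function of $d \mod 8$ (and possibly $d \mod 16$), I would then verify that at least two of these switch moves are simultaneously admissible and give simple cylinders whose areas differ by an odd integer.

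The expected main obstacle is the combinatorial parity verification: one must check that for every even $d\geq 14$, at least one pair of switch moves satisfies both the admissibility inequalities and the parity condition $\Area(\Cc_1)-\Area(\Cc_2)\equiv 1\mod 2$. Since the switch-move formulas involve divisibilities of $d$ and $e$, a finite residue-class analysis modulo $8$ (or modulo $16$, if the parity of $(d+e)/2$ intervenes as in the last paragraph of the proof of Proposition~\ref{prop:square:tiled}) should reduce the problem to a handful of cases. Finally, exceptional small values of $d$ (those for which the generic construction degenerates or Theorem~\ref{theo:H6:connect:PD} leaves $\Pcal^A_D$ with three components) would be treated by hand with computer assistance, following step~(6) of the outline.
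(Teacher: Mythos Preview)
Your high-level strategy matches the paper's exactly: produce a primitive square-tiled surface in $\Omega E_{d^2}(6)$ carrying two simple cylinders whose areas differ by an odd integer, then invoke Corollary~\ref{cor:cylinder:lambda} and Theorem~\ref{theo:H6:connect:PD}. The substantive gap is that your proposal never specifies a concrete $2$-cylinder model, and the parametrization you borrow from the proof of Proposition~\ref{prop:H6:noModA} is for a Model~B (four-cylinder) surface, not a two-cylinder one. A $2$-cylinder decomposition in $\Prym(6)$ has three horizontal saddle connections on each boundary component (paired by the Prym involution), so the slopes $\pm(y+1)/t$, $\pm(y+1)/(x-t)$ you list are not the relevant ones and the ``adapted switch moves'' you describe would have to be worked out from scratch.

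What the paper actually does is very explicit: it fixes integer lengths $l_A,l_B,l_C$ with $l_A+2l_B+2l_C=d$, builds a primitive square-tiled $2$-cylinder surface from them, and exhibits two simple cylinders $\Cc,\Cc'$ of areas $3l_B$ and $2l_B$ respectively (one in slope $3/(l_A+2l_B+l_C)$, the other vertical). Choosing $l_B$ odd gives $e-e'=2l_B\equiv 2\bmod 4$ immediately. The existence of such $l_B$ reduces to a single linear inequality $\tfrac{1}{7}d+\tfrac{4}{7}<l_B<\tfrac{2}{11}d+\tfrac{5}{11}$, which has an odd solution once $d>55$ and for most smaller $d$ as well. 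This bypasses any residue-class analysis modulo $8$ or $16$.

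One further point: your claim that Model~B prototypes ``no longer exist'' is only correct for $h=1$. Prototypes $(w,h,0,e)\in\Pcal^B_{d^2}$ with $h>1$ odd do exist, and the paper uses precisely these (together with the original switch moves $S_1,S_2$, giving $e_1-e_2=2h\equiv 2\bmod 4$) to handle the nine exceptional values $d\in\{14,18,20,22,24,32,34,36,46\}$ where the $2$-cylinder construction fails. So the exceptional cases are not done ``by hand with computer assistance'' but by reverting to Model~B with larger odd height.
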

\begin{proof}[Proof of Theorem~\ref{thm:D:square:even}]
We will construct a  surface $(X,\omega)$ as shown in Figure~\ref{fig:prym6:2cyl:dec}.
Observe that $X$ admits an involution $\tau$  that exchanges the two horizontal cylinders such that $\tau^*\omega=-\omega$.
Since $\tau$ has two fixed points, one of which is the unique zero of $\omega$, $(X,\omega)$ is a Prym from in $\mathcal{H}(6)$.

For $\alpha \in \{A,\bar{A},B,\bar{B},C,\bar{C}\}$, let $ l_\alpha$ denote the length of $\alpha$.
Note that, for $\alpha \in \{A,B,C\}$, $\tau$ exchanges $\alpha$ and $\bar{\alpha}$, therefore $l_\alpha=l_{\bar{\alpha}}$.
The heights of the two horizontal cylinders are set to be $1$.

\begin{figure}[!htb]
\centering
\begin{tikzpicture}[scale=0.28]
\fill[yellow!10!green!20] (8,0) --(10,0) -- (15,5) -- (15,7) -- cycle;
\fill[yellow!10!green!20] (0,7) -- (0,5) -- (3,8) -- (1,8) -- cycle;
\fill[yellow!10!green!20] (1,4) -- (3,4) -- (7,8) -- (5,8) -- cycle;
\draw[pattern=dots] (5,8) -- (8,0) -- (10,0) -- (7,8) -- cycle;

\draw (0,8) --(0,4) -- (5,4) -- (5,0) -- (20,0) -- (20,4) -- (15,4) -- (15,8) -- cycle;

\draw (0,7) -- (1,8) (0,5) -- (3,8) (1,4) -- (5,8) (3,4) -- (7,8) (8,0) -- (15,7) (10,0) -- (15,5);
\foreach \x in {(0,8), (0,4), (5,8), (5,4), (5,0), (7,8), (8,0), (10,8), (10,0), (12,8), (13,0), (15,8), (15,4), (15,0), (20,4), (20,0)} \filldraw[fill=black] \x circle (3pt);
\draw (2.5,8.5) node {\tiny $A$} (2.5,3.5) node {\tiny $A$};
\draw (6,8.5) node {\tiny $B$} (9,-0.5) node {\tiny $B$};
\draw (8.5, 8.5) node {\tiny $C$} (6.5,-0.5) node {\tiny $C$};
\draw (17.5,4.5) node {\tiny $\bar{A}$} (17.5,-0.5) node {\tiny $\bar{A}$};
\draw (11,8.5) node {\tiny $\bar{B}$} (14,-0.5) node {\tiny $\bar{B}$};
\draw (13.5,8.5) node {\tiny $\bar{C}$} (11.5,-0.5) node {\tiny $\bar{C}$};

\draw (7.5,4) node {\tiny $\Cc'$};
\draw (12,3) node {\tiny $\Cc$};
\draw (1,7) node {\tiny $\Cc$};
\draw (4,6) node {\tiny $\Cc$};
\end{tikzpicture}

\caption{A surface $(X,\omega) \in  \mathrm{Prym}(6)$ with two simple cylinders
$\mathcal C$ and $\mathcal C'$.}
\label{fig:prym6:2cyl:dec}
\end{figure}
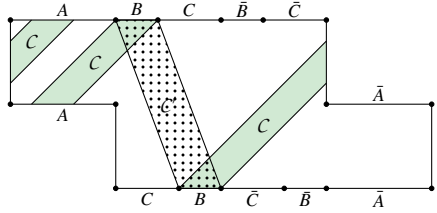
Elementary computation shows that the slope of the cylinder $\mathcal C$ is $\frac{3}{l_A+2l_B+l_C}$, and $\Cc$  exists if and only if the following inequalities hold:
$$
2l_B+l_C < 2l_A \qquad \textrm{ and } \qquad l_A < l_B + 2l_C
$$
or equivalently
\begin{equation}
\label{eq:cylinder}
l_A -2l_C < l_B < l_A-\frac1{2}l_C.
\end{equation}
Let us fixed a natural number $d$. For a given $l_B\in \N$, we let $l_C=l_B-1$ and $l_A = d-2l_B-2l_C=d-4l_B+2$.  Equation~\eqref{eq:cylinder} is then equivalent to
\begin{equation}
\label{eq:cylinder:2}
\frac1{7}d + \frac{4}{7} < l_B < \frac{2}{11}d + \frac{5}{11}
\end{equation}
Observe that if  there exists $l_B \in \N$ such that \eqref{eq:cylinder:2} holds then $l_A,l_B,l_C>0$. \medskip

If  $d$ is sufficiently large, for instance $d > 55 \Rightarrow \frac{2}{11}d + \frac{5}{11} - (\frac1{7}d + \frac{4}{7}) > 2$,
then there exists $l_B\in \N$, $l_B$ odd, such that \eqref{eq:cylinder:2} holds.
For $14 \leq d \leq 54$, we check that there exists $l_B$ odd such that \eqref{eq:cylinder:2} holds
if $d \not \in \{14,18,20,22,24,32,34,36,46\}$.

We first assume that $d \geq 14$ and  $d \not \in \{14,18,20,22,24,32,34,36,46\}$.
Then there exist $l_A,l_B,l_C \in \N$ such that
$$
\left\{
\begin{array}{l}
l_A-2l_C< l_B < l_A-\frac{1}{2}l_C,\\
l_C=l_B-1,\\
l_A+2l_B+2l_C=d,\\
l_B \textrm{ is odd}.
\end{array}
\right.
$$
Let $(X,\omega)$ be the surface constructed from the parameters $l_A,l_B,l_C$ as above,
and $h=1$, where $h$ is the height of both horizontal cylinders.
Since $(X,\omega)$ is square-tiled, its Veech group contains hyperbolic elements. Thus $(X,\omega)$ is a Prym eigenform
in $\Omega E_D(6)$, with $D$ being a square (see \cite{Mc7}).
Since $\gcd(l_B,l_C)=1$ and $h=1$, $(X,\omega)$ is primitive. A direct computation gives
$\mathrm{Area(X,\omega)}=2d$. Thus $(X,\omega) \in \Omega E_{d^2}(6)$ by Proposition~\ref{prop:square:tiled}.

Now the cylinder $\mathcal C$ is simple so that by Corollary~\ref{cor:cylinder:lambda}
there is $g \in \GL^+(2,\R)$ such that $g\cdot (X,\omega)=X_{d^2}(w,h,t,e_B)$, with $(w,h,t,e_B) \in \Pcal^A_{d^2}$, and
$$
\mathrm{Area}(\mathcal C) = 3l_B = \frac{d+e_B}{2}
$$
On the other hand, the cylinder $\mathcal C'$ is also simple, thus there is $g'$ such that $g'\cdot(X,\omega)=X_{d^2}(w',h',t',e'_B)$ and
$$
\mathrm{Area}(\mathcal C') = 2l_B = \frac{d+e'_B}{2}
$$
We draw
$$
e_B-e'_B = 2l_B \equiv 2 \mod 4
$$
since $l_B$ is odd. Thus the two components of the set of prototypes in $\Pcal^A_{d^2}$ are connected. This proves Theorem~\ref{thm:D:square:even}
for $d \not\in \{14,18,20,22,24,32,34,36,46\}$ . \medskip

A short argument handles the remaining cases by using specific prototype of model $B$ satisfying Proposition~\ref{prop:switch:move}
as follows: observe that for a prototype $(w,h,0,e) \in \Pcal^B_{d^2}$, the moves $S_1$ and $S_2$ are admissible if and only if
$$
h < w-e-h<\frac{e+d}{2}
$$
For each exceptional $d$, we find a find a suitable $(w,h,0,e)\in \Pcal^B_{d^2}$ where $h$ is odd. This will give
$S_1(w,h,0,e)=(w_1,h_1,t_1,e_1)$ and $S_2(w,h,0,e)=(w_2,h_2,t_2,e_2)$ with
$$
e_1 -e_2 =2h\equiv 2 \mod 4
$$
concluding the proof of the theorem. This is done in Table~\ref{table:except:Dsq:even:link} below.
\begin{table}[htbp]
$$
\begin{array}{ll}
\begin{array}{|c|c|c|}
\hline
d & \textrm{$(w,h,t,e) \in \mathcal \Pcal^B_{D}$} & h<w-e-h < (e+d)/2 \\
\hline
\hline
14& {\scriptstyle (15, 3, 0, 4)}& {\scriptstyle 3<8<9} \\
18& {\scriptstyle (16, 5, 0, 2)}& {\scriptstyle 5<9<10} \\
20& {\scriptstyle (25, 3, 0, 10)}& {\scriptstyle 3<12<15} \\
22& {\scriptstyle (21, 5, 0, 8)}& {\scriptstyle 5<8<15} \\
24& {\scriptstyle (20, 7, 0, 4)}& {\scriptstyle 7<9<14} \\
\hline
\end{array} &
\begin{array}{|c|c|c|}
\hline
d & \textrm{$(w,h,t,e) \in \mathcal \Pcal^B_{D}$} & h<w-e-h < (e+d)/2 \\
\hline
\hline
32& {\scriptstyle (28, 9, 0, 4)}& {\scriptstyle 9<15<18} \\
34& {\scriptstyle (45, 5, 0, 16)}& {\scriptstyle 5<24<25} \\
36& {\scriptstyle (35, 9, 0, 6)}& {\scriptstyle 9<20<21} \\
46& {\scriptstyle (35, 15, 0, 4)}& {\scriptstyle 15<16<25} \\
&&\\
\hline
\end{array}
\end{array}
$$
\caption{
Connecting  components of $\Pcal^A_D$ through model $B$ for exceptional discriminants $D=d^2$.
}
\label{table:except:Dsq:even:link}
\end{table}
\end{proof}


\section{Proof of Theorem~\ref{thm:H6:eig:comp} when $D\equiv 1 \mod 8$}
\label{sec:D1mod8}

In this section we prove Theorem~\ref{thm:H6:eig:comp} for $D \equiv 1 \mod 8$.
\begin{Theorem}
\label{thm:D1mod8:connect:PA1:PA2}
For any discriminant $D \equiv 1 \mod 8$, $D>9$, $\Omega E_D(6)$ contains a single $\GL^+(2,\R)$-orbit.
\end{Theorem}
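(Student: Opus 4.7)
The plan is to adapt the strategy used in Sections~\ref{sec:pf:H6:conn:D:even:n:sq} and~\ref{sec:main:thm:D:square:even}: connect the two components of $\Pcal^A_D$ by exhibiting a single Veech surface that simultaneously carries Model~A cylinder decompositions realizing prototypes in each of them. By Theorem~\ref{theo:H6:connect:PD}, for the small discriminants $D\in\{17,25,33,41,49\}\subset\mathrm{Exc}_1$ the space $\Pcal^A_D$ is already connected and nothing is to prove; the finitely many $D\in\mathrm{Exc}_2$, where $\Pcal^A_D$ has three components, will be dealt with by direct construction with computer assistance. The heart of the argument is the generic case $D\equiv 1\pmod 8$, $D>9$, $D\notin\mathrm{Exc}_1\cup\mathrm{Exc}_2$, for which Theorem~\ref{theo:H6:connect:PD} gives $\Pcal^A_D=\Pcal^{A_1}_D\sqcup\Pcal^{A_2}_D$, and it suffices to produce a Prym eigenform realizing prototypes in both components.

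The first ingredient is the ``simple sufficient condition on the $e$-parameter'' alluded to in the outline. For any $(w,h,t,e)\in\Pcal^A_D$, the identity $D-e^2=4wh$ shows that if both $w$ and $h$ are even, then $4wh\equiv 0\pmod{16}$ and hence $e^2\equiv D\pmod{16}$. Contrapositively,
\[
e^2\not\equiv D\pmod{16}\ \Longrightarrow\ \text{at least one of $w,h$ is odd}\ \Longrightarrow\ (w,h,t,e)\in\Pcal^{A_1}_D.
\]
Since $D\equiv 1\pmod 8$ forces $e$ to be odd, and odd squares lie in $\{1,9\}\pmod{16}$, this condition pins $e$ to a specific residue class modulo~$8$ determined by $D\bmod 16$. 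In particular, every $(w_0,2,0,e_0)\in\Sc_D^2\subset\Pcal^{A_2}_D$ (with $w_0$ even) automatically satisfies $e_0^2\equiv D\pmod{16}$, and therefore places $e_0$ in the \emph{opposite} residue class: it is this asymmetry that will be exploited.

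Armed with this sufficient condition, the concrete plan parallels Section~\ref{sec:pf:H6:conn:D:even:n:sq}: pick a Model~B prototype $(w,h,0,e)\in\Pcal^B_D$ for which two different Switch moves of Proposition~\ref{prop:switch:move} are admissible, and arrange $(w,h,e)$ so that the resulting Model~A prototypes $S_i(w,h,0,e)=(w_i,h_i,t_i,e_i)$ satisfy $e_i^2\not\equiv D\pmod{16}$ for one index $i$ while $e_j^2\equiv D\pmod{16}$ for another index $j$. The sufficient condition then forces $p_i\in\Pcal^{A_1}_D$, whereas $p_j$ is a priori free to lie in $\Pcal^{A_2}_D$; since $p_i$ and $p_j$ are prototypes of two distinct Model~A decompositions of the same Veech surface, they belong to the same $\GL^+(2,\R)$-orbit, and Theorems~\ref{theo:onto:map} and~\ref{theo:H6:connect:PD} finish the generic case. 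The bookkeeping is done via the explicit formulas
\[
e_1=3e-2w+4h,\quad e_2=3e-2w+2h,\quad e_3=7e+12h-4w,\quad e_4=5e-4w+4h,
\]
together with the admissibility inequalities of Proposition~\ref{prop:switch:move}, which split $D$ into a small number of intervals inside $((e+2h)^2,(e+4h)^2)$ just as in the proof of Theorem~\ref{thm:D:even:n:sq}.

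The main obstacle is securing all of these constraints simultaneously. The natural starting point is $h=1$, for which $e_1-e_2=2$ makes $e_1^2\not\equiv e_2^2\pmod{16}$ precisely when $e$ lies in two of the four odd residues modulo~$8$; the remaining residues must be recovered using $S_3$ or $S_4$, whose effect on $e$ modulo~$8$ shifts by a multiple of~$4$ depending on $w,h,e$. Additional constraints come from ensuring both $S_i$ and $S_j$ are admissible on the chosen $(w,h,0,e)\in\Pcal^B_D$, which forces $(w,h,0,e)$ into specific sub-intervals of Model~B. A short case analysis in the spirit of Section~\ref{sec:pf:H6:conn:D:even:n:sq} handles the bulk of generic $D$, and a finite list of residual small discriminants together with the elements of $\mathrm{Exc}_2$ are dispatched by direct ad~hoc construction, using Lemma~\ref{lm:cyl} to verify that some explicit simple cylinder on a hand-picked surface has area producing an $e$-parameter in the ``good'' residue class.
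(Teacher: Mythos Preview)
Your sufficient criterion $e^2\not\equiv D\pmod{16}\Rightarrow(w,h,t,e)\in\Pcal^{A_1}_D$ is correct and is exactly the observation the paper uses. The gap is on the other side. You propose to pick a Model~B prototype and apply two Switch moves $S_i,S_j$, arranging the $e$-parameters so that $e_i^2\not\equiv D\pmod{16}$ while $e_j^2\equiv D\pmod{16}$, and then you write that ``$p_j$ is a priori free to lie in $\Pcal^{A_2}_D$''. But this is not enough: the condition $e_j^2\equiv D\pmod{16}$ is only \emph{necessary} for $p_j\in\Pcal^{A_2}_D$, not sufficient. The Switch moves of Proposition~\ref{prop:switch:move} give you only $e_j$, not $(w_j,h_j,t_j)$, and it is perfectly possible that $w_jh_j\equiv 0\pmod 4$ with $w_j$ odd (so $p_j\in\Pcal^{A_1}_D$). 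In that case both $p_i$ and $p_j$ lie in $\Pcal^{A_1}_D$ and you have proved nothing. Your argument never certifies that \emph{any} prototype on your surface belongs to $\Pcal^{A_2}_D$, so it does not connect the two components.

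The paper avoids this asymmetry by reversing the logic: it \emph{starts} from the prototypical surface $X_D(w,2,0,e)$ of a prototype in $\Sc^2_D\subset\Pcal^{A_2}_D$, so that membership in $\Pcal^{A_2}_D$ is given for free by construction. On this surface it then exhibits (via a geometric construction on the universal cover, not via the Model~B Switch moves) a new simple cylinder whose associated $e'$ satisfies $e'\equiv 3e\pmod 8$, which forces ${e'}^2\not\equiv D\pmod{16}$ and hence membership in $\Pcal^{A_1}_D$ by your own criterion. The Switch moves through $\Pcal^B_D$ are used only for the finitely many exceptional discriminants, and there too the argument starts from a prototype already known to lie in $\Pcal^{A_2}_D$ (via Lemma~\ref{lm:connect:PA:PB:D:nonsq}) before moving to $\Pcal^{A_1}_D$. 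To salvage your approach you would need either a sufficient criterion for $\Pcal^{A_2}_D$ in terms of the Switch-move data, or to anchor one end of your chain of moves at a surface explicitly built from a $\Pcal^{A_2}_D$ prototype.
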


\subsection{Connecting $\Pcal^{A_1}_D$ and $\Pcal^{A_2}_D$ for generic values of $D$}\label{sec:D1mod8:generic}
Let us introduce some necessary material for the proof.
For $D\equiv 1\mod 8$  large enough, we know by Theorem~\ref{theo:H6:connect:PD} that $\Pcal^A_D$ has two components $\Pcal^{A_1}_D$ and $\Pcal^{A_2}_D$.
Let $(X,\omega):=X_D(w,h,0,e)$ be a prototypical surface, where $(w,h,0,e) \in \Pcal_D^{A_2}$.
To prove Theorem~\ref{thm:D1mod8:connect:PA1:PA2}, it is sufficient to find a periodic direction $\theta$ with prototype
$(w',h',t',e') \in \Pcal_D^{A_1}$.
However such a direction is rather difficult to exhibit. We will work on the universal cover of $(X,\omega)$
to find a simple cylinder  with associated prototype in $\Pcal^{A_1}_D$.
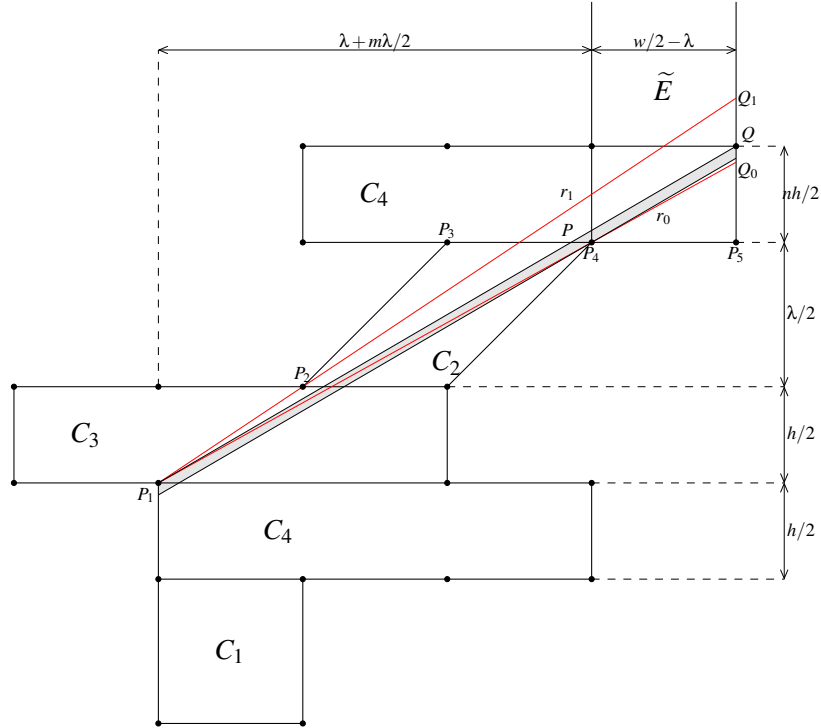
\begin{figure}[htb]
\begin{tikzpicture}[scale=0.32]
\fill[blue!50!yellow!20] (6,4) -- (intersection of 0,0--24,14 and 6,0--6,4) -- (30,17.5) -- (30,18) -- cycle;
\draw[thin] (24,24) -- (24,18) -- (12,18) -- (12,14) -- (18,14) -- (12,8) -- (0,8) -- (0,4) -- (6,4) -- (6,-6) --(12,-6) -- (12,0) -- (24,0) -- (24,4) -- (18,4) -- (18,8) -- (24,14) -- (30,14) -- (30,24);
\draw[thin] (24,18) -- (30,18) ;
\draw[thin] (18,14) -- (24,14) -- (24,18);
\draw[thin] (12,8) -- (18,8);
\draw[thin] (6,4) -- (18,4);
\draw[thin] (6,0) -- (12,0);
\draw[very thin, red] (6,4) -- (30, 20);
\draw[very thin, red ] (6,4) -- (intersection of 24,14-- 33,19 and 30,14--30,18);
\draw[very thin] (6,4) -- (30,18);
\draw[very thin] (intersection of 0,0--24,14 and 6,0--6,4) -- (30,17.5);
\foreach \x in {(0,8),(0,4), (6,8),(6,4),(6,0),(6,-6),(12,18), (12,14),(12,8),(12,0),(12,-6),(18,18), (18,14), (18,8), (18,4), (18,0), (24,18), (24,14), (24,4), (24,0), (30,18), (30,14)} \filldraw[fill=black] \x circle (3pt);

\draw[very thin, <->, >= angle 45] (6,22) -- (24,22);
\draw[very thin, dashed]  (6,8) -- (6,22);

\draw[very thin, dashed] (30,18) -- (32,18);
\draw[very thin, dashed] (30,14) -- (32,14);
\draw[very thin, dashed] (18,8) --(32,8);
\draw[very thin, dashed] (24,4) -- (32,4);
\draw[very thin, dashed] (24,0) -- (32,0);
\draw[very thin, <->, >= angle 45] (24,22) -- (30,22);
\draw[very thin, <->, >= angle 45] (32,18) -- (32,14);
\draw[very thin, <->, >= angle 45] (32,14) -- (32,8);
\draw[very thin, <->, >= angle 45] (32,8) -- (32,4);
\draw[very thin, <->, >= angle 45]  (32,4) -- (32,0);

\draw (15,22.3) node {\tiny $\lambda+m\lambda/2$}  (27,22.3) node {\tiny $w/2-\lambda$};
\draw (32.7, 16) node {\tiny $nh/2$} (32.7,11) node {\tiny $\lambda/2$} (32.7,6) node {\tiny $h/2$} (32.7,2) node {\tiny $h/2$};

\draw (27,20.5) node {$\widetilde{E}$} (15,16) node {$C_4$} (18,9) node {$C_2$} (3,6) node {$C_3$} (11,2) node {$C_4$}  (9,-3) node {$C_1$};
\draw (5.5,3.5) node {\tiny $P_1$} (12,8.5) node {\tiny $P_2$} (18,14.5) node {\tiny $P_3$} (24,13.5) node {\tiny $P_4$} (30,13.5) node {\tiny $P_5$};
\draw (23,14.5) node {\tiny $P$} (30.5,20) node {\tiny $Q_1$} (30.5,17) node {\tiny $Q_0$} (30.5,18.5) node {\tiny $Q$};
\draw (23,16) node {\tiny $r_1$} (27,15) node {\tiny $r_0$};
\end{tikzpicture}
\caption{Searching for periodic directions of model $A$ with prototype in $\Pcal_D^{A_1}$: the shaded region corresponds to a simple cylinder.}
 \label{fig:D1mod8:connect:PA1:PA2}
\end{figure}

In what follows, we will refer to Figure~\ref{fig:D1mod8:connect:PA1:PA2}.
We denote the ray starting from $P_1$ and passing through $P_2$ by $r_1$. Its direction is $\theta_1$
and its slope is
$$
k_1=\frac{h}{\lambda}.
$$
This ray eventually exits the cylinder $C_2$ through its top border.

\begin{Lemma}\label{lm:D1mod8:new:dir:m}
On the universal cover, there is a horizontal segment  $\ol{P_3P_4}$ representing
the top border of $C_2$ ($P_3,P_4$ correspond to the unique singularity of $X$) that intersects $r_1$.
As a vector in $\R^2$, we have $\overrightarrow{P_1P_4}=(\lambda+m\frac{\lambda}{2},\frac{h}{2}+\frac{\lambda}{2})$,
with $m=\lfloor\frac{\lambda}{h}\rfloor \in \N \cup\{0\}$, where $\lfloor.\rfloor$ is the integral part function.
Note that $m$  is the number of times $r_1$ intersects the unique vertical saddle connection in $C_2$.
\end{Lemma}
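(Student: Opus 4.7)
The plan is to place $P_2=(0,0)$ on the universal cover so that the simple cylinder $C_2$ occupies the strip $\{0\le y\le\lambda/2\}$, identify the lattice of singular preimages explicitly, and read off $\overrightarrow{P_1P_4}$ via a one-step Euclidean division. Because $C_2$ is a simple cylinder of dimensions $\lambda/2\times\lambda/2$ with a unique vertical saddle connection, the lifts of that saddle connection inside the strip are the vertical segments $\{x=k\lambda/2\}\cap[0,\lambda/2]$ for $k\in\Z$, and the singular preimages on the bottom and top borders of the strip are exactly the points $(k\lambda/2,0)$ and $(k\lambda/2,\lambda/2)$ respectively. Since the prototype $(w,h,0,e)\in\Pcal_D^{A_2}$ has vanishing twist $t=0$, the cylinder of height $h/2$ immediately below $C_2$ has its top and bottom singular preimages in matching horizontal positions; this, combined with the slope $h/\lambda$ of $r_1$ and the vertical gap $h/2$ between $P_1$ and $P_2$, forces $P_1=(-\lambda/2,-h/2)$.

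In these coordinates $r_1$ is the line $y=(h/\lambda)x$, which leaves $C_2$ at abscissa $x=\lambda^2/(2h)$. Writing the Euclidean division $\lambda=mh+r$ with $0\le r<h$ and $m=\lfloor\lambda/h\rfloor\in\N\cup\{0\}$, we rewrite
\[
\frac{\lambda^2}{2h}=\frac{m\lambda}{2}+\frac{r\lambda}{2h},\qquad 0\le\frac{r\lambda}{2h}<\frac{\lambda}{2}.
\]
For generic $D\equiv 1\mod 8$, i.e.\ $D$ not a perfect square, $\lambda/h$ is irrational and $r\ne 0$, so the exit abscissa lies strictly inside the open interval $\bigl(m\lambda/2,\,(m+1)\lambda/2\bigr)$ on the horizontal line $y=\lambda/2$. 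Setting $P_3:=(m\lambda/2,\lambda/2)$ and $P_4:=((m+1)\lambda/2,\lambda/2)$, both are singular preimages by the lattice description of the first paragraph, $\overline{P_3P_4}$ is the desired lift of the top border of $C_2$ (it has length $\lambda/2$, the length of the single saddle connection on top of $C_2$), and a direct subtraction gives
\[
\overrightarrow{P_1P_4}=P_4-P_1=\Bigl((m+2)\tfrac{\lambda}{2},\;\tfrac{h+\lambda}{2}\Bigr)=\Bigl(\lambda+\tfrac{m\lambda}{2},\;\tfrac{h}{2}+\tfrac{\lambda}{2}\Bigr).
\]

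For the final claim, $r_1$ meets the vertical line $\{x=k\lambda/2\}$ at height $kh/2$, which lies in $(0,\lambda/2)$ iff $0<k<\lambda/h$, giving exactly $m=\lfloor\lambda/h\rfloor$ positive integers; this is the number of times $r_1$ crosses a lift of the unique vertical saddle connection of $C_2$. The main obstacle lies in the first paragraph, namely fixing $P_1=(-\lambda/2,-h/2)$: this requires a careful look at the distribution of singular preimages on the boundary shared by $C_2$ and the height-$h/2$ cylinder below, and it is precisely where the hypothesis $t=0$ (i.e.\ the prototype belongs to $\Pcal_D^{A_2}$) is used. Once this is in place, the remainder of the argument is a mechanical slope-and-floor calculation.
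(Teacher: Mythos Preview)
Your proof is correct and follows essentially the same route as the paper's: both use the slope $h/\lambda$ of $r_1$ to locate, via a floor computation, the segment $\overline{P_3P_4}$ on the top border of $C_2$ that $r_1$ crosses, and then read off $\overrightarrow{P_1P_4}$. The paper's argument is terser---it asserts the form of $\overrightarrow{P_1P_4}$ with $m$ equal to the number of crossings (which is geometrically immediate from the figure) and then verifies $m=\lfloor\lambda/h\rfloor$ by the same inequality $(m+1)\tfrac{\lambda}{2}\le(\tfrac{h}{2}+\tfrac{\lambda}{2})\cdot\tfrac{\lambda}{h}<(m+2)\tfrac{\lambda}{2}$ that your Euclidean division encodes---whereas you make the coordinates explicit, but the content is the same.
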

\begin{proof}
We have $\overrightarrow{P_1P_4}=(\lambda+m\frac{\lambda}{2},\frac{h}{2}+\frac{\lambda}{2})$,
where $m\in \N \cup\{0\}$  is the number of times $r_1$ intersects the unique vertical saddle connection in $C_2$.

Let $P$ be the intersection of $r_1$ and $\ol{P_3P_4}$.
Comparing the horizontal components of the vectors $\overrightarrow{P_1P_3}, \overrightarrow{P_1P}, \overrightarrow{P_1P_4}$, we have
 $$
 (m+1)\frac{\lambda}{2} \leq  (\frac{h}{2}+\frac{\lambda}{2})\cdot\frac{\lambda}{h} < (m+2)\frac{\lambda}{2} \Leftrightarrow m \leq \frac{\lambda}{h} < m+1.
 $$
\end{proof}

The ray from $P_1$ which passes through $P_4$ is denoted $r_0$. Its direction is $\theta_0$
and its slope is
$$
k_0=\frac{h/2+\lambda/2}{\lambda+m \lambda/2}=\frac{h+\lambda}{(m+2)\lambda}.
$$
Since the top of $C_2$ is glued to the bottom of $C_4$, we draw a copy of $C_4$ above $C_2$.
The ray $r_1$ then enters $C_4$ and crosses the left border of the vertical simple cylinder $E$ that is contained in $\ol{C_4}$.
We can represent the universal cover $\tilde{E}$ of $E$ as an infinite vertical band intersecting this copy of $C_4$
in a rectangle representing $E$. Let $Q_1$ denote the intersection of $r_1$ with the right border of $\tilde{E}$.
The ray $r_0$ also crosses $\tilde{E}$. We denote its intersection with the right border of $\tilde{E}$ by $Q_0$.

For $i=1,2$, we define  the {\em  $x$-coordinate ({\rm resp.} $y$-coordinate) of $Q_i$} to be the horizontal (resp. vertical) component of the vector $\overrightarrow{P_1Q_i}$.
In other words, these are the coordinates of $Q_i$ in the plane with origin being $P_1$.
An easy computation shows that the $y$ coordinate of $Q_i$  is
$$
(Q_i)_y = k_i \cdot \frac{w+m\lambda}{2}.
$$
The next lemma gives a sufficient condition to ensure the existence of a simple cylinder.
\begin{Lemma}
\label{lm:existence}
If there exists $n\in \N$ such that
$$
(Q_0)_y < \cfrac{\lambda}{2} + (n+1)\frac{h}{2} < (Q_1)_y
$$
then there is a simple cylinder in direction with slope $k=\frac{(n+1)h+\lambda}{w+m\lambda}$.
\end{Lemma}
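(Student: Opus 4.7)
The plan is to realize the desired simple cylinder as an immersed parallelogram in the universal cover of $(X,\omega)$, bounded by two parallel saddle connections in direction $k$, following the shaded region in Figure~\ref{fig:D1mod8:connect:PA1:PA2}.

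First, I would translate the hypothesis into an angular statement. Since $(Q_i)_y = k_i \cdot \tfrac{w+m\lambda}{2}$ for $i=0,1$, and the middle term satisfies $\tfrac{\lambda}{2}+(n+1)\tfrac{h}{2} = k \cdot \tfrac{w+m\lambda}{2}$, the hypothesis $(Q_0)_y < \tfrac{\lambda}{2}+(n+1)\tfrac{h}{2} < (Q_1)_y$ is equivalent to $k_0 < k < k_1$. Consequently the ray $\rho$ issuing from $P_1$ with slope $k$ lies strictly inside the angular sector at $P_1$ bounded by $r_0$ and $r_1$.

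Next, I would trace $\rho$ in the developing picture. Sharing the combinatorial itinerary of $r_0$ and $r_1$ because its slope interpolates theirs, $\rho$ crosses $C_3$, enters $C_2$, winds exactly $m$ times across the unique vertical saddle connection of $C_2$ (just as $r_0$ and $r_1$ do, by Lemma~\ref{lm:D1mod8:new:dir:m}), exits through the segment $\overline{P_3P_4}$ into $C_4$, and enters the simple vertical cylinder $\tilde E$. Since $\tilde E$ contains no zero of $\omega$ in its interior, $\rho$ crosses $\tilde E$ unobstructed and exits its right border at height exactly $\tfrac{\lambda}{2}+(n+1)\tfrac{h}{2}$, strictly between $(Q_0)_y$ and $(Q_1)_y$.

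The key step is then to close $\rho$ up by the Prym involution $\tau$. The right border of $\tilde E$ sits on an axis of symmetry of $(X,\omega)$, so extending $\rho$ past it amounts to applying $\tau$ to its initial itinerary. The mirrored trajectory returns to a $\tau$-translate of $P_1$ after total holonomy $\bigl(\tfrac{w+m\lambda}{2},\tfrac{(n+1)h+\lambda}{2}\bigr)$, producing a saddle connection $\sigma$ of slope $k$. The parallel companion $\tau(\sigma)$ together with $\sigma$ bounds an immersed parallelogram on $X$; the strict two-sided inequality $k_0 < k < k_1$ provides positive room on both sides, so this parallelogram embeds as an honest flat cylinder whose two boundary components are exactly $\sigma$ and $\tau(\sigma)$, which is precisely the definition of a simple cylinder.

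The hard part will be the third step: verifying rigorously that $\rho$ closes up at a lift of $P_1$ rather than at a regular point. I would handle this by comparing $\rho$ carefully with the two flanking saddle connections $r_0$ and $r_1$, both of which end at lifts of $P_1$ by construction, and using the involution $\tau$ to match the exit point of $\rho$ from $\tilde E$ with its mirror entry point from the opposite side, tracing identifications along the Prym axis of symmetry.
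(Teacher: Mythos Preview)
Your first two steps are sound, but Step 3 both misidentifies the geometry and overlooks the point that makes the lemma immediate. The right border of $\tilde E$ is the lift of a boundary saddle connection of the simple vertical cylinder $E\subset\overline{C_4}$; since $E$ has circumference $h/2$, the lifts of the unique zero of $\omega$ sit on this border at heights $\frac{\lambda}{2}+\frac{h}{2},\ \frac{\lambda}{2}+2\cdot\frac{h}{2},\ \dots$ above $P_1$. The hypothesis therefore says precisely that one such lift, call it $Q$, lies strictly between $Q_0$ and $Q_1$. Your ray $\rho$ already terminates at $Q$: the segment $\overline{P_1Q}$ is a saddle connection, because $k_0<k<k_1$ keeps it inside the open sector bounded by $r_0$ and $r_1$, and the only singularity in that sector up to the right border of $\tilde E$ is $P_4$, which lies on $r_0$. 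There is nothing to ``close up''.

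The Prym argument you propose does not work as stated. The involution $\tau$ satisfies $\tau^*\omega=-\omega$, so in flat charts it acts as a half-turn about its two isolated fixed points, not as a reflection across a line; the right border of $\tilde E$ is not an axis of symmetry. Moreover, $\tau$ permutes the cylinders in any periodic direction in pairs, so the simple cylinder $C$ we seek is \emph{not} bounded by $\sigma$ and $\tau(\sigma)$; rather $\tau(C)$ is a disjoint second simple cylinder. In the paper's proof, one boundary of $C$ is $\overline{P_1Q}$ and the other is the parallel saddle connection through $P_4$: the open strip between these two segments lies in the same singularity-free sector, hence fills an embedded cylinder whose boundary consists of two single saddle connections. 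That is the whole argument; no appeal to $\tau$ is needed.
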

\begin{proof}
The assumption means that the segment $\ol{Q_0Q_1}$ contains a pre-image $Q$ of the singularity of $X$.
Note that the distance from $Q$ to the bottom right vertex of the rectangle representing $C_4$ equals $nh/2$.
Let $\theta$ be the direction of $\ol{P_1Q}$. Then the slope of $\theta$ is $k=\frac{(n+1)h+\lambda}{w+m\lambda}$.
One can easily check that the segment $\ol{P_1Q}$ represents a saddle connection in $M$
which is a boundary component of a simple cylinder $C$.
The other boundary component of $C$ is represented by a segment in direction $\theta$ passing through $P_4$.
\end{proof}

\begin{Lemma}\label{lm:D1mod8:new:dir:par}
Let $(w',h',t',e')$
be the prototype associated to the cylinder decomposition in direction $\theta$.
Then
$$
e'   = 3e-2w+4h+2n(m+2)h.
$$
Moreover if $w \equiv n\cdot m \cdot h \mod 4$ then $(w',h',t',e') \in \Pcal_D^{A_1}$.
\end{Lemma}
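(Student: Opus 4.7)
The plan is to compute the area of the simple cylinder $C$ produced by Lemma~\ref{lm:existence} directly from coordinates on the universal cover, then use Lemma~\ref{lm:cyl} to read off $\lambda'$ and hence $e'$, and finally analyze the parity of $w'h'$ in order to locate $(w',h',t',e')$ in the right component.

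Place $P_1$ at the origin. By construction $\overrightarrow{P_1 Q} = \bigl(\tfrac{w+m\lambda}{2},\tfrac{(n+1)h+\lambda}{2}\bigr)$, while Lemma~\ref{lm:D1mod8:new:dir:m} gives $\overrightarrow{P_1 P_4} = \bigl(\tfrac{(m+2)\lambda}{2},\tfrac{h+\lambda}{2}\bigr)$. The two boundary components of $C$ are parallel saddle connections in direction $\theta$ through $P_1$ and $P_4$ respectively, so the area of $C$ equals the length of $\overrightarrow{P_1Q}$ times the perpendicular distance from $P_4$ to the line it spans; this product is precisely $|\det(\overrightarrow{P_1 Q},\,\overrightarrow{P_1 P_4})|$. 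Expanding this determinant and using the identity $wh = \lambda^{2}-e\lambda$ collapses it to
\[
\mathrm{Area}(C) = \frac{\lambda}{4}\bigl|\lambda + e + 2h + n(m+2)h - w\bigr|.
\]
Comparing with $\mathrm{Area}(C) = \lambda\lambda'/4$ from Lemma~\ref{lm:cyl} and using $\lambda'>0$ yields $\lambda' = \lambda + e + 2h + n(m+2)h - w$. Substituting $2\lambda = e+\sqrt{D}$ and $2\lambda' = e'+\sqrt{D}$ then gives the announced formula $e' = 3e - 2w + 4h + 2n(m+2)h$.

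For the parity statement, write $w=2a$, $h=2b$ (both are even because $(w,h,0,e)\in\Pcal_D^{A_2}$) and note that $e$ is odd since $D\equiv 1 \pmod 8$. Setting $r := \lambda'-\lambda = e - w + 2h + n(m+2)h$, one has $e' = e+2r$ and the identity
\[
w'h' \;=\; \lambda'(\sqrt{D}-\lambda') \;=\; wh - r(e+r).
\]
A short mod-$2$ computation shows that $r$ is odd and that $(e+r)/2 \equiv 1 + a + nmb \pmod 2$, so the hypothesis $w\equiv nmh \pmod 4$ (equivalent to $a\equiv nmb \pmod 2$) forces $(e+r)/2$ to be odd. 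Hence $e+r \equiv 2 \pmod 4$ and $r(e+r) \equiv 2 \pmod 4$, whereas $wh = 4ab \equiv 0 \pmod 4$. Combining these gives $w'h' \equiv 2 \pmod 4$, which rules out $w'$ and $h'$ being simultaneously even; therefore $(w',h',t') \not\equiv (0,0,0) \pmod 2$ and Theorem~\ref{theo:H6:connect:PD} places the new prototype in $\Pcal_D^{A_1}$. The one step requiring geometric care is the identification $\mathrm{Area}(C)=|\det(\overrightarrow{P_1Q},\overrightarrow{P_1P_4})|$, which depends on checking from Figure~\ref{fig:D1mod8:connect:PA1:PA2} that $P_4$ actually lies on the boundary of $C$ opposite to the one through $P_1$; once this is secured, the remainder of the argument is routine algebra and modular arithmetic.
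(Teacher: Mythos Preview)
Your proof is correct. The first part follows the same strategy as the paper---compute $\mathrm{Area}(C)$ and invoke Lemma~\ref{lm:cyl}---only you package the area as $|\det(\overrightarrow{P_1Q},\overrightarrow{P_1P_4})|$ whereas the paper computes the horizontal width $x=|\overline{PP_4}|$ and writes $\mathrm{Area}(C)=x\cdot\frac{(n+1)h+\lambda}{2}$; since $\overrightarrow{P_1P}$ is parallel to $\overrightarrow{P_1Q}$, these two expressions are identical. One small point: when you drop the absolute value and assert $\lambda'=\lambda+e+2h+n(m+2)h-w$, the positivity is not immediate from $\lambda'>0$ alone but follows from the inequality $(Q_0)_y<\tfrac{\lambda}{2}+(n+1)\tfrac{h}{2}$ in Lemma~\ref{lm:existence}; the paper sidesteps this because $x$ is a length and hence manifestly positive.

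For the second assertion you take a slightly different and arguably cleaner route. The paper first shows that the hypothesis forces $e'\equiv 3e\pmod 8$, then splits into the cases $D\equiv 1$ and $D\equiv 9\pmod{16}$ to deduce $4\nmid\frac{D-e'^2}{4}$. You instead use the identity $w'h'=wh-r(e+r)$ and work directly modulo~$4$, which avoids the case distinction entirely and reaches the same conclusion $w'h'\equiv 2\pmod 4$. Both arguments are short; yours has the advantage of being uniform in $D\pmod{16}$.
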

\begin{proof}
Let $P$ be the intersection of $\ol{P_1Q}$ and $\ol{P_3P_4}$. We first compute $x=|\ol{PP_4}|$.
We have
 $$
 \begin{array}{ccl}
  x & = & (m+2)\frac{\lambda}{2}-(\frac{h}{2}+\frac{\lambda}{2})\cdot\frac{1}{k}\\
    & = & \dfrac{\lambda}{2}\left((m+2)- \dfrac{(m+1)\lambda-e  + w + mh}{(n+1)h+\lambda}\right) \mbox{(here, we used the fact that $\lambda^2=e\lambda+wh$)}\\
    & = & \dfrac{\lambda}{2} \cdot \dfrac{\lambda+e-w+2h+n(m+2)h}{(n+1)h+\lambda}
 \end{array}
$$
Now, the area of $C$ is
$$
\Aa(C)=x\cdot\frac{(n+1)h+\lambda}{2}=\frac{\lambda}{4}\cdot(\lambda+e-w+2h+n(m+2)h).
$$
The first assertion then follows from Lemma~\ref{lm:cyl}.

We now prove the second assertion of the lemma.
Recall that $(w,h,0,e)\in \Pcal^{A_2}_D$, which means that $w$ and $h$ are even.
Therefore, $4\mid wh=\frac{D-e^2}{4}$.
Since $D \equiv 1 \mod 8$, we have two cases: if $D\equiv  1 \mod 16$ then $e \equiv \pm 1 \mod 8$, and if $D \equiv 9 \mod 16$ then $e \equiv \pm 3 \mod 8$.
The assumption then implies that $e' \equiv 3e \mod 8$.  An elementary computation shows that in either case $4\nmid \frac{D-{e'}^2}{4}$,
 which means that $w'$ and $h'$ cannot be both even, hence $(w',h',t',e') \in \Pcal_D^{A_1}$.
\end{proof}

\begin{Proposition}
\label{prop:D1mod8:connect:Dlarge}
For any $D \equiv 1 \mod 8$ with $D>9$ and $D\not\in \{17,25,33,41,49,65,73,105\}$ there exists $(w,2,0,e) \in \Sc^2_D$ such that there is a simple cylinder in direction $\theta$ with associated prototype in $\Pcal^{A_1}_D$.
\end{Proposition}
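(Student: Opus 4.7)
The plan is to construct, for each $D \equiv 1 \pmod 8$ outside the finite exceptional list, an almost-reduced prototype $(w,2,0,e) \in \Sc^2_D$ on which the machinery of Lemmas~\ref{lm:D1mod8:new:dir:m}, \ref{lm:existence}, and \ref{lm:D1mod8:new:dir:par} produces a simple cylinder whose associated prototype lies in $\Pcal^{A_1}_D$. Concretely, starting from such a prototype (so $h=2$), I set $m = \lfloor \lambda/2 \rfloor$ and search for an integer $n \geq 0$ satisfying simultaneously the cylinder-existence condition
\[
(Q_0)_y \;<\; \tfrac{\lambda}{2} + (n+1) \;<\; (Q_1)_y
\]
from Lemma~\ref{lm:existence}, and the parity condition $w \equiv 2nm \pmod 4$, equivalently $nm \equiv w/2 \pmod 2$, that Lemma~\ref{lm:D1mod8:new:dir:par} requires in order to land in $\Pcal_D^{A_1}$.

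First I would verify that $\Sc^2_D$ is non-empty in the stated range. Writing $D = e^2 + 8w$ with $e$ odd, the condition that $w$ be even amounts to $D \equiv e^2 \pmod{16}$, achievable by taking $e$ odd in the class $\pm 1 \pmod 8$ when $D \equiv 1 \pmod{16}$, and in the class $\pm 3 \pmod 8$ when $D \equiv 9 \pmod{16}$. The Model~$A$ condition $\lambda < w/2$ reads $(e+8)^2 < D$, automatic for all sufficiently large $D$ once the smallest admissible $e$ is fixed. Letting $e$ vary over odd integers in the appropriate residue class then produces a finite family of candidate prototypes in $\Sc^2_D$.

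Using the explicit expressions
\[
(Q_1)_y = \frac{w + m\lambda}{\lambda}, \qquad (Q_0)_y = \frac{(2+\lambda)(w + m\lambda)}{2(m+2)\lambda},
\]
a direct computation shows that the allowed interval for $n+1$ has length
\[
\frac{(w + m\lambda)(2m+2-\lambda)}{2(m+2)\lambda},
\]
which is comparable to $2m+2-\lambda \in (0, 2]$. Hence the interval typically contains one or two integer values, and whether the parity condition can be satisfied depends on $m \bmod 2$ and $w/2 \bmod 2$: if $m$ is even one needs $4 \mid w$, while if $m$ is odd any $n$ of the correct parity works.

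The main obstacle is showing that, as $e$ ranges over the admissible odd values for a fixed $D$, at least one resulting prototype furnishes both an integer $n+1$ in the allowed interval \emph{and} the correct parity. This amounts to tracking the fractional part of $\lambda$, together with $w \bmod 4$ and $m \bmod 2$, across the candidate family; since the interval length is bounded by $2$, there is little room to adjust, and the analysis must be carried out case by case according to $D \bmod 16$ and the parity behavior of $m$. The excluded discriminants $\{17,25,33,41,49,65,73,105\}$ are exactly those small values of $D$ where the candidate family in $\Sc^2_D$ is too constrained for this simultaneous matching to succeed; I would handle them separately by direct, computer-assisted verification.
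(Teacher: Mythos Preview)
Your setup is correct: you have the right lemmas, the right formulas for $(Q_0)_y$ and $(Q_1)_y$, and the parity condition $w\equiv 2nm\pmod 4$ is exactly what Lemma~\ref{lm:D1mod8:new:dir:par} gives. The gap is in the analysis of the interval length, and consequently in the choice of $e$.

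Your claim that the length
\[
(Q_1)_y-(Q_0)_y=\frac{(w+m\lambda)(2m+2-\lambda)}{2(m+2)\lambda}
\]
is ``comparable to $2m+2-\lambda\in(0,2]$'' is only valid when the prefactor $\dfrac{w+m\lambda}{2(m+2)\lambda}$ is close to $1$. Rewriting this prefactor (using $2w=\lambda(\lambda-e)$) as $\dfrac{\lambda-e+2m}{4(m+2)}\approx\dfrac{\sqrt{D}}{2\lambda}$, you see it is of order $1$ only for $e$ near $0$; if instead $e$ is taken close to $-\sqrt{D}$, then $\lambda$ is small, $m$ is bounded, and the prefactor is of order $\sqrt{D}$. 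The paper exploits exactly this: it fixes $e\in(-\sqrt{D},-\sqrt{D}+21)$, which forces $0\le m\le 5$, and then shows (Lemma~\ref{lm:D1mod8:good:par}) that within that window one can choose $e$ so that simultaneously $4\mid w$ \emph{and} $(m+1)-\lambda/2\ge 1/2$. With those two conditions and $D>57^2$, a short estimate gives $(Q_1)_y-(Q_0)_y>2$, so an \emph{even} $n$ always lies in the interval; since $4\mid w$, the congruence $w\equiv 2nm\pmod 4$ is then automatic regardless of $m$. The range $9<D\le 57^2$ is handled by direct check.

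So the missing idea is not a case analysis of fractional parts and parities over a generic family of $e$'s (which you correctly sense would be delicate when the interval is short), but rather the observation that by pushing $e$ toward $-\sqrt{D}$ you can \emph{make the interval long}, and that a short auxiliary lemma lets you do this while also forcing $4\mid w$. Your outline as stated does not reach a proof because the ``case by case'' step is never carried out, and the premise that the interval has length at most $2$ removes precisely the flexibility the argument needs.
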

For the proof of Proposition~\ref{prop:D1mod8:connect:Dlarge}, we first need the following
\begin{Lemma}\label{lm:D1mod8:good:par}
Assume that  $D > 21^2$, then there exists a prototype
$(w,2,0,e) \in \Sc^2_D$ with $e \in (-\sqrt{D},-\sqrt{D}+21)$ such that
$$
\left\{
\begin{array}{l}
(\lfloor\frac{\lambda}{2}\rfloor+1)- \frac{\lambda}{2} \geq \frac{1}{2}, \text{ and }\\
 4 \mid w.
\end{array}
\right.
$$
\end{Lemma}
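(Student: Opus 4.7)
The plan is to translate both conditions into residue constraints on $e$ and then combine them via a pigeonhole argument on an arithmetic progression of five candidates.

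\textbf{Residue conditions.} Writing $w = (D-e^2)/8$, the divisibility $4 \mid w$ is equivalent to $e^2 \equiv D \pmod{32}$. Since $D \equiv 1 \pmod 8$, the residue $D \bmod 32$ lies in $\{1,9,17,25\}$, so there is a unique odd $a \in \{1,3,5,7\}$ with $a^2 \equiv D \pmod{32}$, and the solution set mod $32$ is $S_D = \{a,\, 16-a,\, 16+a,\, 32-a\}$. This splits into two pairs $\{a, 16+a\}$ and $\{16-a, 32-a\}$, each consisting of two elements that agree modulo $4$ and differ by $16$. The fractional-part condition $\{\lambda/2\} \leq 1/2$ is equivalent to $\lambda \in [2k,\, 2k+1]$ for some $k \in \N \cup \{0\}$, i.e., $e \in J_k := [4k-\sqrt{D},\, 4k+2-\sqrt{D}]$; restricted to odd integers, this selects a single residue class modulo $4$, determined by the parity of $\lceil \sqrt{D}\, \rceil$. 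Combining the two requirements, we seek $e$ whose residue modulo $32$ lies in a specific pair $\{r,\, r+16\} \subset S_D$.

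\textbf{Other prototype constraints.} For $e \in I := (-\sqrt{D},\, -\sqrt{D}+21)$ and $D > 441$, a direct check verifies: $w > 0$ (since $|e| < \sqrt{D}$); $\gcd(w, 2, 0, e) = 1$ because $e$ is odd; the Model-A inequality $(e+8)^2 < D$ (since $-\sqrt{D}+29 < \sqrt{D}$ whenever $D > 210$); and $\lambda \neq w/2$, which is automatic when $D$ is not a square, and when $D = d^2$ follows from the fact that $d - e > 2d - 21 > 8$ for $d > 21$.

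\textbf{Pigeonhole step.} The hypothesis $D > 441$ guarantees that $J_0, J_1, J_2, J_3, J_4 \subset I$. Each $J_k$ has length $2$, so generically contains two consecutive integers, of which exactly one is odd. We thus obtain five odd candidates $e_0 < e_1 < \cdots < e_4$ in arithmetic progression with common difference $4$, all in the mod-$4$ class selected by the fractional-part condition. Their residues modulo $32$, namely $\bar{e}_0,\, \bar{e}_0+4, \ldots,\, \bar{e}_0+16$, occupy five consecutive positions among the eight residues of that mod-$4$ class (viewed as $\Z/8\Z$). The two target residues $\{r,\, r+16\}$ sit at positions exactly $4$ apart in $\Z/8\Z$. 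The complement of our five positions in $\Z/8\Z$ consists of three cyclically consecutive positions, whose pairwise cyclic distances are at most $2$; it therefore cannot contain two residues that are $4$ apart. Consequently, at least one $e_k$ lies in $\{r,\, r+16\} \pmod{32}$, furnishing the required prototype.

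The delicate point is the combinatorial argument in the last paragraph: verifying that the five candidates really are distinct modulo $32$ and that the complement argument closes. The only other subtlety is the degenerate case when $D$ is a perfect square, where $J_k$ may contain three integers instead of two (only enlarging the candidate pool); the threshold $D > 21^2$ is tight for fitting $J_0, \dots, J_4$ inside $I$.
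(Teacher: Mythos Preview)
Your proof is correct and takes a genuinely different route from the paper's. The paper argues by a two-step constructive adjustment: first choose any $e_0 \in (-\sqrt{D}, -\sqrt{D}+7)$ with $e_0^2 \equiv D \pmod{16}$, then shift by $+2$ or $+6$ (which preserves the mod-$16$ condition while flipping $\{\lambda/2\}$ by $1/2$) to enforce the fractional-part requirement, and finally shift by $+8$ if needed (which preserves both previous conditions while toggling $w \bmod 4$, since $e_1$ is odd) to obtain $4 \mid w$. You instead recast both conditions as a single residue constraint $e \bmod 32 \in \{r, r+16\}$ and run a pigeonhole on five consecutive terms of a step-$4$ arithmetic progression in $\Z/32\Z$: since the three positions complementary to your five are cyclically consecutive (diameter $\leq 2$), they cannot contain both of $\{r, r+16\}$. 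Your approach treats the cases $D \equiv 1$ and $D \equiv 9 \pmod{16}$ uniformly and makes the structure of the solution set $S_D$ transparent; the paper's is more mechanical and avoids the bookkeeping of explicitly identifying $S_D$. Two minor quibbles: the containment $J_0 \subset I$ fails at the left endpoint since $I$ is open at $-\sqrt{D}$, though the odd integer in $J_0$ lies strictly inside $I$; and in the square case $D=d^2$ your remark about ``enlarging the candidate pool'' should be sharpened to observe that the mod-$4$ class of $2-d$ still yields five candidates $\{2-d, 6-d, \dots, 18-d\} \subset I$, so the pigeonhole applies verbatim.
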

\begin{proof}
Since $D \equiv 1 \mod 8$, we have $D \equiv 1,9 \mod 16$.
For the rest of this prove we will assume that $D\equiv 1 \mod 16$,
the other case follows from the same argument.

\noindent \underline{Step 1:} let $e_0 \in (-\sqrt{D},-\sqrt{D}+7)$ be an integer such that $e_0\equiv \pm 1 \mod 8$.
 If $\frac{e_0+\sqrt{D}}{4}-\lfloor\frac{e_0+\sqrt{D}}{4}\rfloor \leq \frac{1}{2}$ then we choose $e_1=e_0$.
Otherwise, either $e_1=e+2$ or $e_1=e+6$ satisfies $e_1\equiv \pm 1 \mod 8$. Note that in either case, we have
$$
\frac{e_1+\sqrt{D}}{4}-\lfloor\frac{e_1+\sqrt{D}}{4}\rfloor= \frac{e_0+\sqrt{D}}{4}-\lfloor\frac{e_0+\sqrt{D}}{4}\rfloor-\frac{1}{2} < \frac{1}{2}.
$$
Thus there exists $e_1 \in (-\sqrt{D},-\sqrt{D}+13)$ such that
$$
(\lfloor\frac{e_1+\sqrt{D}}{4}\rfloor+1)-\frac{e_1+\sqrt{D}}{4} \geq \frac{1}{2}.
$$

\noindent \underline{Step 2:} consider now $w_1=\frac{D-e^2_1}{8}$. Note that by assumption, $w_1$ is even.
If $4 \mid w_1$, then $(w_1,2,0,e_1)$ is the desired prototype. Otherwise, consider $e_2=e_1+8 \in (-\sqrt{D},-\sqrt{D}+21)$. We have
$$
w_2:=\frac{D-e_2^2}{8}=\frac{D-(e_1+8)^2}{8}=w_1+2e_1+8.
$$
Since $e_1$ is odd, we have $4 | w_2$.
Moreover, we also have
$$
\frac{e_2+\sqrt{D}}{4}-\lfloor\frac{e_2+\sqrt{D}}{4}\rfloor=\frac{e_1+\sqrt{D}}{4}-\lfloor\frac{e_1+\sqrt{D}}{4}\rfloor \leq \frac{1}{2}
$$
Therefore, $(w_2,2,0,e_2)$ is the desired prototype.
\end{proof}

\begin{proof}[Proof of Proposition~\ref{prop:D1mod8:connect:Dlarge}]
Lemma~\ref{lm:D1mod8:new:dir:par} provides us with a sufficient condition to guarantee that the prototype of the direction $\theta$ belongs to $\Pcal^{A_1}_D$. For any fixed $D\equiv 1 \mod 8$, we can use this criterion to check Proposition~\ref{prop:D1mod8:connect:Dlarge} for $9 < D \leq 57^2=3249$. Thus assume that $D>57^2$.

 Let $(w,2,0,e)\in\Pcal_D^{A_2}$, with $e \in(-\sqrt{D},-\sqrt{D}+21)$ that satisfies the conditions of Lemma~\ref{lm:D1mod8:good:par}. If there is $n\in \N$ such that
 $$
(Q_0)_y < \cfrac{\lambda}{2} + (n+1)\frac{h}{2} < (Q_0)_y
$$
then Lemma~\ref{lm:existence} implies the existence of a simple cylinder and
a prototype $(w',h',t',e') \in\Pcal_D^{A}$. By Lemma~\ref{lm:D1mod8:new:dir:par}
$(w',h',t',e') \in\Pcal_D^{A_1}$ if $w \equiv n\cdot m \cdot h \mod 4$. Since $w\equiv 0 \mod 4$
it suffices to show that $n$ can be chosen even. This is obviously the case if we have
$$
y:=(Q_1)_y-(Q_0)_y > 2\cdot\frac{h}{2}=2.
$$
By construction, the left hand side of the above inequality is (recall $h=2$):
$$
\begin{array}{ccl}
y & = & \dfrac{w+m\lambda}{2}\times (k_1-k_0), \\
  & = & \dfrac{w+m\lambda}{2(m+2)}\times\dfrac{h}{\lambda}\times\left((m+1)-\dfrac{\lambda}{h}\right)\\
  & = & \dfrac{\lambda-e+mh}{2(m+2)}\times\left((m+1)-\dfrac{\lambda}{h}\right)\mbox{(here, we used the fact that $\lambda^2=e\lambda+wh$)}\\
\end{array}
$$
Since $e \in (-\sqrt{D},-\sqrt{D}+21)$:
$$
\lambda-e=\frac{\sqrt{D}-e}{2} > \frac{2\sqrt{D}-21}{2} >\sqrt{D}-11.
$$
and
$$
0 < \frac{\lambda}{h}=\frac{e+\sqrt{D}}{4} < \frac{21}{4}
$$
which implies
 $$
 0 \leq m=\lfloor\frac{\lambda}{h}\rfloor \leq 5.
$$
Since $(m+1)-\frac{\lambda}{h} =(\lfloor\frac{\lambda}{h}\rfloor+1)-\frac{\lambda}{h} \geq \frac{1}{2}$ by the choice of $e$, we get
$$
\begin{array}{ccl}
 y & > &  \dfrac{1}{2}\times\dfrac{\sqrt{D}-11+2m}{2(m+2)} \\
   & > & \dfrac{1}{2}\times \left(\dfrac{\sqrt{D}-15}{2(m+2)}+1\right)\\
   & > & \dfrac{1}{2}\times \left( \dfrac{57-15}{2\times 7}+1\right) =2.
\end{array}
$$
This completes the proof of the proposition.
\end{proof}

\subsection{Proof of Theorem~\ref{thm:D1mod8:connect:PA1:PA2}}
\begin{proof}
By Lemma~\ref{lm:4cyl:dec:exist} and Proposition~\ref{prop:H6:noModA},
we know that every $\GL^+(2,\R)$-orbit in $\Omega E_D(6)$ contains a prototypical surface $X_D(w,h,t,e)$,
with $p=(w,h,t,e) \in \Pcal^A_D$.

If $D\not\in \{17,25,33,41,49,65,73,105\}$ and $D \not\in \mathrm{Exc}_2$, then Theorem~\ref{theo:H6:connect:PD} implies that $\Pcal^A_D$ has two components $\Pcal^{A_1}_D$ and $\Pcal^{A_2}_D$. It follows from Proposition~\ref{prop:D1mod8:connect:Dlarge} that there is a prototype in $\Pcal^{A_2}_D$ that is equivalent a prototype in $\Pcal^{A_1}_D$. Thus the theorem is proved for this case.

For $D\in \{17,25,33,49\}$, $\mathcal S^2_D$ is empty and $\Pcal_D$ has one component so there is nothing to prove.

For $D\in \{41,65,73,105\}$, and $D\in \mathrm{Exc}_2$ we can use the prototypes in $\Pcal^B_D$ and the switch moves to connect all the components of $\Pcal^A_D$. Details are given in Appendix~\ref{sec:D1mod8:except}.
\end{proof}


\section{Proof of the main theorems}\label{sec:prf:main:thms}
\subsection{Proof of Theorem~\ref{thm:H6:eig:comp}}
\begin{proof}
Let $(X,\omega)$ be a translation surface in $\Omega E_D(6)$ for a discriminant $D \geq 4$.
By Lemma~\ref{lm:4cyl:dec:exist} and Proposition~\ref{prop:normalize:A}, the $\GL^+(2,\R)$-orbit of $(X,\omega)$ contains a prototypical surface associated to some prototype $p$ in $\Pcal_D$.
Since $\Pcal_4=\Pcal_9=\varnothing$, the loci $\Omega E_4(6)$ and $\Omega E_9(6)$ are empty.

If $D=5$ then $\Pcal_D=\Pcal^B_D=\{(1,1,0,-1)\}$. Thus $\Omega E_5(6)=\GL^+(2,\R)\cdot X_5(1,1,0,-1)$.

Assume from now on that $D\neq 5$. Then by Proposition~\ref{prop:H6:noModA} we can take that $p\in \Pcal^A_D$.

\begin{itemize}

\item  Case $D\equiv 5 \mod 8$ follows from Theorem~\ref{theo:onto:map} and Theorem~\ref{theo:H6:connect:PD} (1).

\item Case $D \equiv 1 \mod 8$ and $D>9$ follows from Theorem~\ref{thm:D1mod8:connect:PA1:PA2}.

\item  Case $D$ even and $D >4$, by Theorems~\ref{thm:D:even:n:sq} and~\ref{thm:D:square:even} we get the desired conclusion for $\allowbreak D\not\in \{4,16,36,64,100,144\}$.
 For the remaining values of $D$ we have

 \begin{itemize}
 \item[.] {\bf $D=16$:} in  this case $\Pcal^A_{16}=\{(3,1,0,-2)\}$, thus $\Omega E_{16}(6)$ has one component.

 \item[.] {\bf $D=36$:} in this case $\Pcal^A_{36}$ has two components $\allowbreak \{(5,1,0,-4), (9,1,0,0)\}$ and $\allowbreak \{(8,1,0,-2)\}$.
 Consider the square-tiled   in Theorem~\ref{thm:D:square:even}, with $(l_A,l_B,l_C)=(2,1,1)$.
 This surface has a simple cylinder $C_1$ in the vertical direction of area $2$, and another simple cylinder $C_2$ in the direction of slope $\frac{3}{5}$ of area $3$.
 The prototype of the cylinder decomposition in the vertical direction is $(8,1,0,-2)$, and the prototype for the decomposition in the direction $\frac{3}{5}$ is $(9,1,0,0)$.
 Thus $\Omega E_{36}(6)$ has one component.

 \item[.] {\bf $D=64$:} $\Pcal^A_{64}$ has two components
 $$
 \left\{
 \begin{array}{ccl}
 \Pcal^{A^1}_{64} & = & \{(w,h,t,e) \in \Pcal^A_{64}, \ e \equiv 0 \mod 4\}, \\
 \Pcal^{A^2}_{64} & = &\{(w,h,t,e) \in \Pcal^A_{64}, \ e \equiv 2 \mod 4\}.
 \end{array}
 \right.
 $$
 Consider the square-tiled surface in Theorem~\ref{thm:D:square:even}, with $(l_A,l_B,l_C)=(2,2,1)$.
 This surface has a simple cylinder $C_1$ in the vertical  with $\Aa(C_1)=2$, and a simple cylinder $C_2$ in the direction of slope $\frac{2}{7}$ with $\Aa(C_2)=3$.
 The prototype of the cylinder decomposition in the vertical direction is $(\cdot,\cdot,\cdot,-4) \in \Pcal^{A^1}_{64}$, while the prototype of the decomposition in the direction of $C_2$ is $(\cdot,\cdot,\cdot,-2)\in \Pcal^{A_2}_{64}$.
 Thus $\Omega E_{64}(6)$ has only one component.

 \item[.] {\bf $D=100$:} $\Pcal^A_{100}$ has three components
 $$
 \left\{
 \begin{array}{ccl}
 \Pcal^{A^1}_{100} & = & \{(w,h,t,e) \in \Pcal^{A}_{100}, \, e\in \{-8,-4,0,4\}\}\\
 \Pcal^{A^2}_{100} & = & \{(16,1,0,-6),(12,2,1,-2),(14,1,0,2)\} \\
 \Pcal^{A^3}_{100} & = & \{(8,2,1,-6), (24,1,0,-2)\}.
 \end{array}
 \right.
 $$
 Let $(X,\omega)$ be the primitive square-tiled surface associated with the prototype $\allowbreak (24,1,0,-2) \in \Pcal^{A^3}_{100}$.
 By considering the cylinder decomposition in the direction of slope $\frac{1}{2}$, we see that $\GL^+(2,\R)\cdot(X,\omega)$ contains the square-tiled surface $(X',\omega')$ constructed in Theorem~\ref{thm:D:square:even} with $(l_A,l_B,l_C)=(4,1,2)$.
 We observe that $(X',\omega')$ has a simple cylinder in direction of slope $\frac{3}{8}$ of area $3$.
 The prototype of the corresponding cylinder  decomposition is $(\cdot,\cdot,\cdot,-4) \in \Pcal^{A^1}_{100}$.
 Thus the surfaces associated with prototypes in $\Pcal^{A^1}_{100}$ and $\Pcal^{A^3}_{100}$ belong to the same $\GL^+(2,\R)$-orbit.

 Consider now the square-tiled surface in Theorem~\ref{thm:D:square:even}, with $(l_A,l_B,l_C)=(2,3,1)$.
 This surface has a simple cylinder $C_1$ in the direction of slope $-2$ with $\Aa(C_1)=6$, and a simple cylinder $C_2$ in the direction of slope $\frac{2}{9}$ with $\Aa(C_2)=5$.
 The prototype of the cylinder decomposition in the direction of $C_1$ is $(14,1,0,2) \in \Pcal^{A^2}_{100}$, and the prototype of the decomposition in the direction of $C_2$ is $(25,1,0,0) \in \Pcal^{A^1}_{100}$. Thus $\Omega E_{100}(6)$ consists of a single $\GL^+(2,\R)$-orbit.

 \item[.] {\bf $D=144$:} we have $\Pcal^A_{144}$ has two components
 $$
 \left\{
 \begin{array}{ccl}
  \Pcal^{A^1}_{144} &  = & \{(w,h,t,e) \in \Pcal^A_{144}, \, e \equiv 0 \mod 4\}, \\
  \Pcal^{A^2}_{144} &  = & \{(w,h,t,e) \in \Pcal^A_{144}, \, e \equiv 2 \mod 4\}.
 \end{array}
 \right.
 $$
Consider  the square-tiled surface in Theorem~\ref{thm:D:square:even}, with $(l_A,l_B,l_C)=(2,4,1)$.
This surface has a simple cylinder $C_1$ in the vertical direction with $\Aa(C_1)=2$, and a simple cylinder $C_2$ in the direction of slope $\frac{2}{11}$ with $\Aa(C_2)=7$.
The prototype of the cylinder decomposition in the direction of $C_1$ is $(\cdot,\cdot,\cdot,-8) \in \Pcal^{A^1}_{144}$, and the prototype of the decomposition in the direction of $C_2$ is $(27,1,0,6) \in \Pcal^{A^2}_{144}$. Thus $\Omega E_{144}(6)$ consists of a single $\GL^+(2,\R)$-orbit.
\end{itemize}
The proof of the theorem is now complete.
\end{itemize}
\end{proof}

\subsection{Proof of Theorem~\ref{thm:H6:eig:comp:2}}
\begin{proof}
Theorem~\ref{thm:H6:eig:comp:2} is a direct consequence of Theorem~\ref{thm:H6:eig:comp} and Proposition~\ref{prop:square:tiled}.
\end{proof}

\appendix

\section{Proof of Theorem~\ref{theo:H6:connect:PD}}\label{sec:prf:connect:PD}

\subsection{Spaces of reduced  prototypes and almost-reduced prototypes}
The proof of Theorem~\ref{theo:H6:connect:PD} uses the {\em reduced  prototypes} and {\em almost  reduced prototypes} defined in Section~\ref{sec:red:prototypes}. It will be convenient to parametrize the set of reduced prototypes of discriminant $D$ by
$$
\Sc^1_{D}= \left\{ e\in \mathbb Z : e^2 \equiv D \mod 4 \textrm{ and } e^{2},\ (e+4)^{2} < D   \right\}.
$$
Similarly, when $D\equiv 1 \mod 8$, we will use the set
$$
\Sc^2_{D}  = \{  e\in \Z,\  e^{2} \equiv  D \mod  16,\ e^{2}, \textrm{ and } (e+8)^{2} < D \}.
$$
to parametrize the set of almost-reduced  prototypes.
For $h=1,2$, each element $e \in \SD$ gives rise to a prototype  $[e] := (w,h,0,e) \in \Pcal^A_D$, where $w:=(D-e^2)/4h$.

Recall that by Lemma~\ref{lm:reduced}, every component of $\Pcal_D^A$ contains an element of $\SD$.
As a consequence of Proposition~\ref{prop:BMpq:par:change}, we have
\begin{Lemma}\label{lm:BM:preserve:S2}
 Let $(w,2,0,e)$ be a prototype in $\Sc_D^2$. Let $q$ be a positive integer such that $\gcd(w/2,q)=1$, or $q=\infty$.
 If the Butterfly move $B_q$ is admissible then $B_q(w,2,0,e) \in \Sc_D^2$.
\end{Lemma}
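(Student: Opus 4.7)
Write $(w',h',t',e'):=B_q(w,2,0,e)$. The plan is to verify, in this order, that $h'=2$, that $w'$ is even, and that $t'=0$; together with $p'\in\Pcal_D^A$ (which holds by Proposition~\ref{prop:BM:transform:rule} because $B_q$ is admissible), this places $p'$ in $\Sc_D^2$.

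First I would compute $h'$ using the explicit formulas of Proposition~\ref{prop:BM:transform:rule}. For $q=\infty$ we get $h'=\gcd(t,h)=\gcd(0,2)=2$. For $q\in\N$ we have $h'=\gcd(qh,w+qt)=\gcd(2q,w)$; writing $w=2(w/2)$ and using $\gcd(w/2,q)=1$ yields $h'=2\gcd(q,w/2)=2$. This is the only place where the coprimality hypothesis enters.

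Second, I would show that $w'$ is even by a congruence argument. From Proposition~\ref{prop:BM:transform:rule}, $e'=-e-4qh=-e-8q$ when $q\in\N$, and $e'=-e-4h=-e-8$ when $q=\infty$; in both cases $e'\equiv -e\mod 8$, hence ${e'}^2\equiv e^2\mod 16$. Since $p\in\Sc_D^2$, $D=e^2+8w$ with $w$ even, so $D\equiv e^2\mod 16$. Therefore $D-{e'}^2\equiv 0\mod 16$, and $w'=(D-{e'}^2)/(4h')=(D-{e'}^2)/8$ is even.

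The main obstacle is the third step, showing $t'=0$. The defining inequality of $\Pcal_D$ gives $0\le t'<\gcd(w',h')=\gcd(w',2)=2$, so $t'\in\{0,1\}$; the goal is to rule out $t'=1$. I would adapt the mod-$2$ argument from the proof of Lemma~\ref{lm:D1mod8:S1:no:connect:S2}. The matrix $T_p$ of the generator of $\Ord_D$ attached to $p=(w,2,0,e)\in\Sc_D^2$ satisfies
\[
T_p\equiv\begin{pmatrix}\Id_2 & 0\\ 0 & 0\end{pmatrix}\mod 2,
\]
since $w$ is even and $e$ is odd (the latter follows from $\gcd(w,h,t,e)=1$ with $w,h$ even). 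The basis change matrix of a Butterfly move has the block form $M_1\cdot M_2\cdot M_3$ of Proposition~\ref{prop:BMpq:par:change}, and a direct check (identical to the one in Lemma~\ref{lm:D1mod8:S1:no:connect:S2}) shows that conjugation by $M_1M_2M_3$ preserves the mod-$2$ shape above. Hence the matrix $T'_p$ of $T_p$ in the new symplectic basis still reduces to $\bigl(\begin{smallmatrix}\Id_2 & 0\\ 0 & 0\end{smallmatrix}\bigr)\mod 2$. Because $T_{p'}$ and $T'_p$ are both generators of $\Ord_D$, there exists $f\in\Z$ with $T_{p'}=\pm T'_p+f\Id_4$; reading off the top-right $2\times 2$ block of $T_{p'}$ as in Proposition~\ref{prop:normalize:A} gives $\bigl(\begin{smallmatrix}w' & t'\\ 0 & h'\end{smallmatrix}\bigr)\equiv 0\mod 2$, so $t'$ is even. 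Combined with $t'\in\{0,1\}$, this forces $t'=0$ and concludes the proof.
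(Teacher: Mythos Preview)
Your argument is correct, but the route differs from the paper's and contains a redundancy worth noting. The paper proves $w'$ and $t'$ even \emph{simultaneously} by reading off the explicit entries $a,b,c,d$ from the proof of Proposition~\ref{prop:BMpq:par:change}: with $h=2$, $t=0$, and $w$ even one has $a=2s$, $b=-8q-rw-2e$, $c=-2q$, $d=pw$, all even, and since $\left(\begin{smallmatrix}w'&t'\\0&h'\end{smallmatrix}\right)=\left(\begin{smallmatrix}1&-n\\0&1\end{smallmatrix}\right)\left(\begin{smallmatrix}a&b\\c&d\end{smallmatrix}\right)A$ the conclusion is immediate. Your third step---the mod-$2$ conjugation argument borrowed from Lemma~\ref{lm:D1mod8:S1:no:connect:S2}---is essentially this same computation repackaged: the off-diagonal blocks of $T'_p$ vanish mod~$2$ precisely because $a,b,c,d$ are even. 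In particular, that step already yields $w'\equiv 0\bmod 2$, so your second step (the separate congruence argument via $e'^2\equiv e^2\bmod 16$) is unnecessary. The paper's direct approach is shorter; your version has the advantage of reusing a lemma already proved, but you could streamline by dropping the arithmetic step for $w'$ and noting that the top-right block conclusion of the conjugation argument handles both $w'$ and $t'$ at once.
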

\begin{proof}
Let $(w',h',t',e')=B_q(w,2,0,e)$. We first claim that $h'=2$.
Indeed, from Proposition~\ref{prop:BMpq:par:change}, we know that
$h'=\gcd(2q,w)$ if $q \in \N$, or $h'=\gcd(2,0)=2$ if $q=\infty$.
In the former case, since $\gcd(q,w/2)=1$ and $w$ is even we also have $h'=2$.

We now claim that both $w'$ and $t'$ is even. To see that, observe that the matrix
$\left(\begin{smallmatrix} a & b \\ c & d \end{smallmatrix}\right)$ in the proof of
Proposition~\ref{prop:BMpq:par:change} satisfies
$a\equiv b \equiv c \equiv d \equiv 0 \mod 2$. Since we have
$\left(
\begin{smallmatrix}
 w' & t' \\ 0 & h'
\end{smallmatrix}
\right)=\left(\begin{smallmatrix} 1 & -n \\ 0 & 1  \end{smallmatrix}\right)\cdot\left( \begin{smallmatrix} a & b \\ c & d \end{smallmatrix}\right) \cdot A
$ the claim follows.
Now, since $t' < \gcd(w',h')=2$, we must have $t'=0$, which means that $(w',h',t',e')\in \Sc_D^2$.
\end{proof}

\subsection{Connected components  of $\SD$}
We equip $\SD$ with the relation $e\sim e'$ if $[e']= B_q([e])$,
for some $q\in \N \cup \{\infty\}$ if $(e+4qh)^2<D$. Note that this condition implies that $e'=-e-4qh$, and  $\gcd(w,qh)=h$,
 when $q\in  \N\setminus \{0\}$, and  $e'=-e-4h$, when $q=\infty$.  An equivalence class  of the
equivalence  relation generated  by  this relation  is  called a  {\em  component} of $\SD$.

\begin{Theorem}[Components of $\SD$]
\label{theo:H6:connect:SD}
Let $D\geq 12$  be a  discriminant.  Let us assume that
$$ D   \not \in
\left\{
\begin{array}{l}
12,16,17,20,25,28,36,73,88,97,105,112,121,124,136,145,148, \\
169,172,184,193,196,201,217,220,241,244,265,268,292,304, \\
316, 364,385,436,484,556,604,676,796,844,1684
\end{array}
\right\}.
$$
Then the set ${\mathcal S}^1_{D}$ is non empty and has either
\begin{itemize}
\item three components, $\{e\in {\mathcal S}^1_{D},\ e\equiv 0 \textrm{ or } 4 \mod 8\}$, $\{e\in {\mathcal S}^1_{D},\ e\equiv 2 \mod 8\}$ and \\
$\{e\in {\mathcal S}_{D}^1,\ e\equiv -2 \mod 8\}$, if $D \equiv 4 \mod 8$,
\item two components,
\begin{itemize}
\item $\{e\in {\mathcal S}^1_{D},\ e\equiv 1 \textrm{ or } 3 \mod 8\}$ and $\{e\in {\mathcal S}^1_{D},\ e\equiv
-1 \textrm{ or } -3 \mod 8\}$ if $D \equiv 1 \mod 8$,
\item $\{e\in {\mathcal S}^1_{D},\ e\equiv 0 \textrm{ or } 4 \mod 8\}$ and $\{e\in {\mathcal S}^1_{D},\ e\equiv
+ 2 \textrm{ or } -2 \mod 8\}$ if $D \equiv 0 \mod 8$,
\end{itemize}
\item only one component, otherwise.
\end{itemize} \medskip

\noindent Let $D\geq 12$  be a  discriminant with $D\equiv 1 \mod 8$.  If
$$ D   \not \in
\left\{
17, 25, 33, 49,113, 145, 153, 177, 209, 217, 265, 273, 313, 321, 361, 385, 417, 481, 513
\right\}
$$
then the set ${\mathcal S}^2_{D}$ is non empty and connected.
\end{Theorem}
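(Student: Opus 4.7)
The plan is to mirror the reduction approach used for the analogous statements in genus two~\cite{Mc4} and genus three~\cite{Lanneau:Manh:H4}: translate the butterfly moves into concrete arithmetic operations on the single integer $e$, extract parity invariants that bound the number of components from above, and within each invariant class run a Euclidean-style reduction to connect any two elements.

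\textbf{Reformulation.} For $[e]=(w,1,0,e)\in\Sc_D^1$ with $w=(D-e^2)/4$, Proposition~\ref{prop:BM:transform:rule} shows that $B_q$ is admissible when $(e+4q)^2<D$ and takes values in $\Sc_D^1$ precisely when $\gcd(q,w)=1$, in which case it acts as $e\mapsto -e-4q$; the move $B_\infty$ always preserves $\Sc_D^1$ and coincides with $B_1$ on the $e$-coordinate. Similarly, by Lemma~\ref{lm:BM:preserve:S2}, $B_q$ preserves $\Sc_D^2$ whenever $\gcd(q,w/2)=1$, and then acts as $e\mapsto -e-8q$.

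\textbf{Invariants and upper bound.} A direct computation of $w\mod 2$ in terms of $D\mod 8$ and $e\mod 4$ gives: when $D\equiv 4\mod 8$, $w$ is even iff $e\equiv 2\mod 4$; when $D\equiv 0\mod 8$, $w$ is even iff $e\equiv 0\mod 4$; when $D\equiv 1\mod 8$, $w$ is always even; when $D\equiv 5\mod 8$, $w$ is always odd. Whenever $w$ is even, admissibility forces $q$ odd, so $e+e'=-4q\equiv 4\mod 8$; otherwise $q$ is unconstrained and only $e+e'\equiv 0\mod 4$ is forced. Tracking these congruences case by case shows that the mod-$8$ classes listed in the statement are preserved under $\sim$, yielding the announced upper bounds: three components when $D\equiv 4\mod 8$, two when $D\equiv 0$ or $1\mod 8$, and at most one when $D\equiv 5\mod 8$. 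A parallel computation of $w/2\mod 2$ for $\Sc_D^2$ shows that the parity of the admissible $q$ now varies with $e$, so no mod-$16$ obstruction persists across $\Sc_D^2$.

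\textbf{Connectivity and exceptional discriminants.} For the matching lower bound, the key observation is that the composition $B_{q_2}\circ B_{q_1}$ acts as the translation $e\mapsto e+4(q_2-q_1)$, so whenever $q_1,q_2$ can be chosen coprime to $w$ and satisfying the respective admissibility windows, arbitrary shifts of $e$ inside its mod-$8$ class become available. Given two elements of the same invariant class, one then runs a Euclidean-style reduction, iteratively choosing $q$ so that $-e-4q$ approaches a canonical representative (for instance the element of the class nearest to $-\sqrt{D}$). The main obstacle, and the source of the exceptional discriminants in the statement, is the interplay between the coprimality condition $\gcd(q,w)=1$ and the admissibility window $(e+4q)^2<D$: for small $D$, the arithmetic of $w$ (few prime divisors in a narrow window) can block the reduction at individual steps. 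For $D$ large, an elementary estimate on the density of integers coprime to $w$ inside the admissible window guarantees enough freedom to complete the reduction; the finitely many excluded discriminants, as well as the corresponding statement for $\Sc_D^2$, are then handled by direct enumeration of $\Sc_D^i$ and its admissible butterfly moves, either by hand or by a short computer search.
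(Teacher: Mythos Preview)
Your outline follows the same architecture as the paper: read off the mod-$8$ invariants from the parity of $w$ to get the upper bound, then use compositions of butterfly moves as translations on $e$ to connect within each class, with small $D$ handled by enumeration. The upper-bound half is fine. The gap is in the connectivity half, which is where essentially all the work lies and which your sketch does not actually carry out.

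Two specific issues. First, ``Euclidean-style reduction'' hides a moving target: after each move the value of $w$ changes, so the coprimality constraint $\gcd(q,w)=1$ is against a new modulus at every step, and near the ends of $\Sc_D^h$ the admissibility window $(e+4qh)^2<D$ becomes narrow. The paper deals with this by first proving the single step $e\sim e+8h$ for $e$ in a central range $\mathcal{T}_D^h$, via an exhaustive case analysis of $(D,e)$ modulo $105=3\cdot5\cdot7$: for each residue pair it exhibits an explicit word in $F_{\pm3},F_{\pm5},F_{\pm7}$ (where $F_q=B_\infty B_q$) realising the step, and two residue pairs are genuinely exceptional and need a separate treatment. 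Your sketch gives no mechanism for producing such a step. Second, to move an arbitrary $e$ into $\mathcal{T}_D^h$ one needs an integer $q$ coprime to $w$ and of controlled size; this is not an ``elementary density estimate'' but an appeal to Kanold's bound on the Jacobsthal function (the maximal gap between integers coprime to $w$), combined with the estimate $J(w//210)\le w^{1/3}$. Without naming this tool, the claim that ``enough freedom'' exists for large $D$ is unsupported: $w$ can have many small prime factors, and the admissibility window has width $O(\sqrt{D})$ while $w$ is of size $O(D)$.
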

We follows the same strategy as the proof of~\cite[Th. 8.6]{Lanneau:Manh:H4}.
For the sake of completeness, we review the arguments here (that are slightly different),
and we do not wish to claim any originality in this part.

\subsection{Exceptional cases}

Our number-theoretic analysis of  the connectedness of $\SD$
only applies when $D$ is sufficiently large ({\it e.g.} $D \geq (83h)^{2}$). On
one  hand  it is  feasible  to compute  the  number  of components  of
$\SD$ when  $D$ is reasonably small. This  reveals the exceptional    cases   of   Theorem~\ref{theo:H6:connect:SD}.
 On  the other hand, using computer assistance, one can easily
prove the following

\begin{Lemma}
\label{lm:exceptionnal:cases}
Theorem~\ref{theo:H6:connect:SD} is true for all $D \leq (83h)^{2}$.
\end{Lemma}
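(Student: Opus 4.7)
The plan is to reduce the assertion to a finite computation and describe an algorithm that a computer can execute. Since $\Sc^1_D$ and $\Sc^2_D$ are defined by explicit inequalities and congruence conditions on $e$, for each discriminant $D$ in the stated range we can enumerate their elements directly: for $\Sc^1_D$ we list all integers $e$ with $e^2 \equiv D \pmod 4$ and $e^2, (e+4)^2 < D$, and for $\Sc^2_D$ (when $D\equiv 1 \mod 8$) we list all $e$ with $e^2 \equiv D \pmod{16}$ and $e^2, (e+8)^2 < D$. Both lists have size $O(\sqrt{D})$, so the total enumeration is well within computational reach for $D \leq (83h)^2 = 27556$ (for $h=2$).

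Next, I would build the equivalence classes explicitly. For each $[e] \in \Sc^h_D$ (with the associated $w = (D-e^2)/(4h)$, $t=0$), I would enumerate the admissible Butterfly moves $B_q$: for each $q\in \N$ satisfying the admissibility inequality $(e+4qh)^2 < D$ (finitely many such $q$), compute $B_q([e])$ via the formulas in Proposition~\ref{prop:BM:transform:rule}, namely $e' = -e - 4qh$ and $h' = \gcd(qh, w+qt)$; similarly for $B_\infty$ giving $e' = -e-4h$ and $h' = \gcd(t,h)$. Whenever the resulting prototype is again reduced ($h=1$) or almost-reduced ($h=2$ with $w$ even), we record an edge in a graph whose vertices are the elements of $\Sc^h_D$. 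If the resulting prototype is neither reduced nor almost-reduced, we first apply the reduction procedure of Lemma~\ref{lm:reduced} (iteratively applying $B_\infty$ and $B_1$ until $h$ is minimal) to land back in $\Sc^h_D$, and record the corresponding edge. In the case $h=2$, Lemma~\ref{lm:BM:preserve:S2} guarantees that the moves $B_q$ with $\gcd(q,w/2)=1$ preserve $\Sc^2_D$, so we only need to consider such $q$.

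Once the graph is built, a standard connected-components algorithm (union-find or BFS) yields the partition of $\Sc^h_D$ into equivalence classes. It then suffices to verify, for each $D$ in the finite range, that the number of components and their descriptions in terms of congruences of $e$ modulo $8$ match the statement of Theorem~\ref{theo:H6:connect:SD}, and that the exceptional cases listed in the theorem are precisely the discriminants for which the generic statement fails. Since the enumeration is finite and each verification is straightforward arithmetic, this can be carried out by a short computer program.

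The only delicate point is ensuring correctness of the reduction step that returns a general prototype to $\Sc^h_D$: one must confirm that the composition of moves used in the reduction is indeed a sequence of admissible Butterfly moves (so that the equivalence in the graph faithfully reflects the equivalence $\sim$ on $\Pcal^A_D$), which follows from the proof of Lemma~\ref{lm:reduced} since $B_\infty$ is always admissible and $B_1$ is admissible whenever $(e+4h)^2 < D$, a condition automatically satisfied when working with reduced prototypes. Beyond this bookkeeping, the lemma is a direct finite verification.
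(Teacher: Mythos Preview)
Your reduction to a finite computation is exactly what the paper does (it simply asserts the result is obtained by computer assistance), and your description of the enumeration and graph-building is largely sound. However, there is a genuine gap in your algorithm: the step ``if the resulting prototype is neither reduced nor almost-reduced, we first apply the reduction procedure of Lemma~\ref{lm:reduced} to land back in $\Sc^h_D$, and record the corresponding edge'' computes the \emph{wrong} equivalence relation.

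The relation $\sim$ on $\Sc^h_D$ is defined intrinsically: $e\sim e'$ only when some admissible $B_q$ sends $[e]$ \emph{directly} to $[e']\in\Sc^h_D$ (this forces $\gcd(w,qh)=h$). By instead allowing a detour through non-reduced elements of $\Pcal^A_D$ and reducing back, you are computing the coarser relation induced from $\Pcal^A_D$. These genuinely differ: for instance, when $D\equiv 4\bmod 8$, Theorem~\ref{theo:H6:connect:SD} asserts that $\Sc^1_D$ has \emph{three} components (splitting $e\equiv 2\bmod 8$ from $e\equiv -2\bmod 8$), whereas $\Pcal^A_D$ has only \emph{two} (Theorem~\ref{theo:H6:connect:PD}); the merging of the $\{e\equiv 2\}$ and $\{e\equiv -2\}$ pieces is carried out in \S\ref{sec:except:0} precisely by passing through $h=2$ prototypes. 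Your algorithm would therefore report two components of $\Sc^1_D$ for such $D$ and fail to confirm the statement you are trying to verify. The fix is simple: discard any $B_q$ whose image leaves $\Sc^h_D$, i.e., only add an edge when $h'=h$ (and $t'=0$). With that correction, the rest of your plan is correct and matches the paper's approach.
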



\subsection{Small values of $q$} Surprisingly it is possible to show
that Theorem~\ref{theo:H6:connect:SD}  holds for most values of  $D$ only by
using   butterfly   moves  $B_q$   with   small   $q$,  namely   $q\in
\{1,2,3,5,7\}$. If $q$ is a prime  number, we will use the following two
operations
$$ \left\{
\begin{array}{lll}
F_{q}(e) &=& e + 4h(q-1)=B_\infty B_q([e]), \\ F_{-q}(e) &=& e - 4h(q-1)=B_qB_\infty([e]).
\end{array}
\right.
$$


These two maps are useful to us, since we have
\begin{Proposition}
Let $e\in \SD$, and assume that $q$ is an odd prime.
\begin{enumerate}
\item If $F_{q}(e)  \in \SD$ and $D  \not \equiv e^{2} \mod
  q$ then $e \sim F_{q}(e)$.
\item If $F_{-q}(e)  \in \SD$ and $D  \not \equiv (e+4h)^{2}
  \mod q$ then $e \sim F_{-q}(e)$.
\end{enumerate}
\end{Proposition}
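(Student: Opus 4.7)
The plan is to unfold the definitions $F_q(e) = B_\infty B_q([e])$ and $F_{-q}(e) = B_q B_\infty([e])$ and to verify, under the stated hypotheses, that each Butterfly move in the composition is admissible and lands back in $\SD$.

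For the first assertion, I would start from $[e] = (w, h, 0, e)$ with $w = (D - e^2)/(4h)$. The assumption $F_q(e) \in \SD$ yields in particular $(F_q(e) + 4h)^2 = (e + 4qh)^2 < D$, which is exactly the admissibility condition of $B_q$ from Proposition~\ref{prop:BM:transform:rule}. Writing $B_q([e]) = (w_1, h_1, t_1, e_1)$, the formulas there give $e_1 = -e - 4qh$ and $h_1 = \gcd(qh, w)$. The key point will be to show that $h_1 = h$. Since $4qwh = q(D - e^2)$ and $q$ is an odd prime, the hypothesis $D \not\equiv e^2 \pmod{q}$ implies $q \nmid w$ when $h = 1$, and $q \nmid w/2$ when $h = 2$ (recalling that $w$ is even by the definition of $\Sc_D^2$); in either case $\gcd(qh, w) = h$. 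For $h = 2$, Lemma~\ref{lm:BM:preserve:S2} then forces $t_1 = 0$, while for $h = 1$ this is automatic since $t_1 < \gcd(w_1, 1) = 1$. Thus $B_q([e]) = [e_1] \in \SD$. Now $B_\infty$ is always admissible, and by Lemma~\ref{lm:BM:preserve:S2} (applied with $q = \infty$) its image again lies in $\SD$, with $e$-parameter equal to $-e_1 - 4h = e + 4qh - 4h = F_q(e)$. Hence $B_\infty B_q([e]) = [F_q(e)]$, giving $e \sim F_q(e)$.

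The second assertion will be handled symmetrically. I would first apply $B_\infty$, which is always admissible and preserves $\SD$, sending $[e]$ to $[-e - 4h]$; note that $(-e-4h)^2 < D$ and $e^2 < D$ both follow from $e \in \SD$. Next, admissibility of $B_q$ at $[-e-4h]$ reads $((-e-4h) + 4qh)^2 = F_{-q}(e)^2 < D$, which is part of the assumption $F_{-q}(e) \in \SD$. Writing $w_0 = (D - (e+4h)^2)/(4h)$ for the width of $[-e-4h]$, the hypothesis $D \not\equiv (e+4h)^2 \pmod{q}$ gives $\gcd(qh, w_0) = h$ by the same dichotomy as above, so we land back in $\SD$ with final $e$-parameter $-(-e-4h) - 4qh = F_{-q}(e)$, establishing $e \sim F_{-q}(e)$.

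The main delicate point is the case $h = 2$, where the twist parameter of the intermediate prototype does not automatically vanish from the constraint $t < \gcd(w, h) \leq 2$. It is precisely Lemma~\ref{lm:BM:preserve:S2} that will guarantee that the Butterfly moves stay within $\Sc_D^2$ under the coprimality conditions extracted from the hypotheses on $D$ modulo $q$; once this is in hand, the rest is a direct verification using the explicit formulas of Proposition~\ref{prop:BM:transform:rule}.
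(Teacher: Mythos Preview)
Your proof is correct and follows the same route as the paper's: unfold the definitions of $F_{\pm q}$ as compositions of Butterfly moves, use the hypothesis $F_{\pm q}(e)\in\SD$ for admissibility, and use $D\not\equiv e^2$ (resp.\ $D\not\equiv (e+4h)^2$) modulo $q$ to force $\gcd(qh,w)=h$ so that the intermediate prototype stays in $\SD$. The paper compresses all of this into two lines; your version spells out the details, in particular the appeal to Lemma~\ref{lm:BM:preserve:S2} to control the twist parameter when $h=2$, which is indeed the only nontrivial point.
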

\begin{proof}
It  suffices  to  remark  that  $[F_q(e)]$  (resp.  $[F_{-q}(e)]$)  is
obtained   from   $[e]$   by   the   sequence   of   butterfly   moves
($B_q,B_\infty$)   (resp.   ($B_\infty,B_q$)),   and  the   respective
conditions ensure the admissibility of the corresponding sequence
(and $\gcd(w,qh)=h$ since $w$ is even if $h=2$).
\end{proof}

\noindent The  next   proposition  guarantees  that,  under   some  rather  mild
assumptions, one has $e \sim F_3(e)=e+8h$.

\begin{Proposition}
\label{prop:equiv:step8}
Let $e\in  \SD$  and let us  assume that $e-24h$  and $e+32h$
also belong to $\SD$. Then one of the following two holds:
\begin{enumerate}
\item $e \sim e+8h$, or
\item  $(D,e)$ is  congruent  to $(4h^2,-10h)$  or  $(4h^2,-2h)$
  modulo $105 = 3\cdot5\cdot7$.
\end{enumerate}
\end{Proposition}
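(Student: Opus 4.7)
The plan is to construct an explicit chain of admissible elementary moves $F_{\pm q}$ with $q \in \{3,5,7\}$ connecting $e$ to $e + 8h$ inside $\SD$. Recall that $F_q$ and $F_{-q}$ are each admissible at a state $e'$ under a single non-congruence condition of the form $D \not\equiv (e' + ah)^2 \pmod q$ (together with the size condition). The hypothesis that $e - 24h$ and $e + 32h$ both belong to $\SD$ ensures that every intermediate reduced prototype arising in the paths below lies in $\SD$, so only congruence obstructions can block a chosen path.

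My first attempt is the direct move $F_3(e) = e + 8h$, which succeeds as soon as $D \not\equiv e^2 \pmod 3$. If this fails (i.e.\ $D \equiv e^2 \pmod 3$), I will try the four natural two-step paths of net displacement $+8h$:
\begin{align*}
F_5 \circ F_{-3} &: e \to e - 8h \to e + 8h, \\
F_{-3} \circ F_5 &: e \to e + 16h \to e + 8h, \\
F_7 \circ F_{-5} &: e \to e - 16h \to e + 8h, \\
F_{-5} \circ F_7 &: e \to e + 24h \to e + 8h.
\end{align*}
Each such path is admissible unless a specific pair of congruences modulo two of $3, 5, 7$ is violated. If these all fail, I would fall back on three-step detours such as $F_{-3} \circ F_7 \circ F_{-3}$ and $F_7 \circ F_{-3} \circ F_{-3}$, which provide additional flexibility via $B_7$ in residue classes where the shorter paths are blocked.

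The heart of the proof is a bookkeeping exercise: collect for each path in the list the congruences that must hold for its inadmissibility, then analyse by case, splitting first on $e$ modulo each of $3, 5, 7$ separately. Applying the Chinese Remainder Theorem to the combined system shows that the simultaneous inadmissibility of every path in the list forces $(D, e)$ to lie in a handful of residue classes modulo $105$. A direct computation narrows these down to the two classes $(4h^2, -10h)$ and $(4h^2, -2h)$. Conversely, at each of these two residues one checks by hand that every path in the list is indeed blocked, so the exceptions are genuine.

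The main obstacle, I expect, is the exhaustive nature of the case analysis and the need to choose the three-step detours carefully. The awkward cases are those where $B_3$ cannot be applied to $[e]$ at all because $\gcd(w, 3) = 3$ (equivalently, $3 \mid e$ and $3 \mid D$); there $F_3$ is simply unavailable in $\SD$, and one must route through $e - 8h$ or $e + 16h$. Choosing the list of paths to be rich enough to resolve such residues, yet narrow enough that the exceptional set reduces to exactly the two residue classes in the statement, is the principal combinatorial work.
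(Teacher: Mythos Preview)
Your approach is essentially the same as the paper's: reduce to a finite check over residue classes of $(D,e)$ modulo $105$ by exhibiting, for each non-exceptional class, an explicit sequence of moves $F_{\pm q}$ with $q\in\{3,5,7\}$ whose net effect is $e\mapsto e+8h$ and whose intermediate states stay in the window $\{e-24h,\dots,e+32h\}$. The paper frames these sequences as ``strategies'' and verifies (with computer assistance) that $12$ of them suffice; beyond $(3)$ and the four two-step paths you list, one also needs $(5,3,-5)$, $(-5,3,5)$, $(5,5,-7)$, $(-7,5,5)$, $(-3,7,-3)$, and two \emph{four}-step strategies $(-5,3,7,-3)$ and $(-3,7,3,-5)$. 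Your explicitly named three-step detours alone are not enough to finish, but you correctly identify that enlarging the list is the principal combinatorial work, so the plan is sound.
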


\begin{proof}
We say that a sequence of integers $(q_1,q_2,\dots,q_n)$ is a strategy
for $(D,e)$ if for any $i=1,\dots,n-1$ the following holds:
$$
\left\{
\begin{array}{l}
e_1=e,\\
q_i \textrm{ is admissible for } (D,e_i), \\
e_{i+1}=F_{q_i}(e_i)   \in   e   +  \{-24h,-16h,-8h,0,8h,16h,24h,32h\}, \\
 e_{n} = e+8h.
\end{array}\right.
$$
For instance,  if $(D,e) \equiv (0,3) \mod 105$  then $(5,-3)$ is a
strategy. Indeed  letting $e = 3$ we  see that $3 \sim  F_{5}(3) = 19$
since $5$  is admissible for $(D,3)$.  And $19 \sim F_{-3}(19)  = 11 =
3+8$   since  $-3$  is   admissible  for   $(D,19)$.  Hence   $3  \sim
3+8$.  \medskip

Thus in order to prove the proposition we only need to give a strategy
for every pair $(D,e) \mod 105$  with the two exceptions stated in the
theorem. In fact each of the  $105^2-2$ cases can be handled by one of
the following $12$ strategies.

\begin{enumerate}
\item There  are $7350$  pairs $(D,e)$ for  which $q=3$  is admissible
  ({\it i.e.}  $D\not \equiv  e^2 \mod 3$).  Since $F_3(e) =  e+8h$ the
  sequence $(3)$ is a strategy for all of these cases.

\item Among the $105^2-2-7350=3673$  remaining pairs, there are $1960$
  pairs $(D,e)$ for which the sequence $(5,-3)$ is a common strategy.

\item We can continue searching  strategies for all remaining
  pairs $(D,e)$ but two: $(4h^2,-10h)$ and $(4h^2,-2h)$.
  We found  the following strategies:
$$
\begin{array}{l}
(7,-5),\  (-3,5),  \ (-5,7),  \\  (5,3,-5),  \  (-5,3,5), \  (5,5,-7),
 \ (-7,5,5), \ (-3,7,-3), \\ (-5,3,7,-3), \ (-3,7,3,-5).
\end{array}
$$
\end{enumerate}
Note  that the  condition  that  $e-24h$ and  $e+32h$  belong to  $\SD$
guarantees the admissibility of the strategies. This completes the proof
of the proposition.
\end{proof}

\begin{Remark}
Since for  $(D,e) \equiv  (4h^2,-2h) \mod 105$  one has $D \equiv e^2 \equiv (e+4h)^2
\mod  105$, even  though one can enlarge the set  of primes to be
used in the strategies, there is no hope to get a similar conclusion to Proposition~\ref{prop:equiv:step8}
without the second case.
\end{Remark}

\begin{Remark}
\label{rk:crietrion:ends}
A simple criterion to be not  close to the ends of $\SD$ is
the following.
\begin{center}
If $f\in \SD$ then for any $e>f,  \qquad (e+36h < \sqrt{D})
\implies (e+32h\in \SD)$.
\end{center}
Indeed, $e+32h\in \SD$ if and only if $(e+32h)^{2} <
D$ and $(e+32h+4h)^{2}=(e+36h)^{2} < D$. Thus the claim is obvious if $e+32h\geq0$.
Now, if $e < -32h$, then since $e>f$ the inequalities
$$
0> e + 32h > f + 32h > f \qquad \textrm{and} \qquad -(f+4h)> 4h> e + 36h
> f + 36h > f + 4h
$$
implies
$$ (e + 32h)^{2} < f^{2} <  D \qquad \textrm{and} \qquad (e + 36h)^{2} <
(f + 4h)^{2} < D.
$$
\end{Remark}
Let us define $\mathcal  T^h_{D} = \{  e \in \SD, \ e-24h \textrm{  and }
e+32h\in \SD \}$. Simple calculations show
\begin{Lemma}\label{lm:TD:dual}
 Assume that $D>(36h)^2$. Then if $e\in \mathcal{T}^h_D$ and $e \geq -2h$, then $-e-4h \in \mathcal{T}^h_D$.
\end{Lemma}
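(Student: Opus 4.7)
The plan is to exploit the involution $\iota\colon e\mapsto -e-4h$ on $\SD$. First one verifies that $\iota$ really is an involution of $\SD$: under $\iota$ the two inequalities $e^2<D$ and $(e+4h)^2<D$ are simply interchanged, while the congruence $e^2\equiv D\pmod{4h^2}$ is preserved because $(-e-4h)^2-e^2 = 8h(e+2h)$ is divisible by $4h^2$ when $h\in\{1,2\}$ (an immediate parity check). In particular $\iota(e)\in\SD$, so one only needs to show that $\iota(e)-24h=-e-28h$ and $\iota(e)+32h=-e+28h$ both lie in $\SD$.

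The trick is to apply $\iota$ once more: $\iota(-e-28h)=e+24h$ and $\iota(-e+28h)=e-32h$, so the two membership conditions to check become $e+24h\in\SD$ and $e-32h\in\SD$. In each case the congruence mod $4h^2$ is a direct computation ($(e+24h)^2-e^2$ and $(e-32h)^2-e^2$ are both divisible by $4h^2$ for $h\in\{1,2\}$), so only the size inequalities are at issue.

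For $e+24h$ the inequalities are immediate: since $e\geq -2h$ one has $0<e+24h<e+28h<e+32h$, and $(e+32h)^2<D$ is part of the hypothesis $e+32h\in\SD$, whence $(e+24h)^2,(e+28h)^2<D$. For $e-32h$ one needs $(e-32h)^2,(e-28h)^2<D$; here the condition $e+32h\in\SD$ gives $(e+36h)^2<D$, hence $e<\sqrt{D}-36h$, while the lower bound $e\geq -2h$ combined with $\sqrt{D}>36h$ (which is exactly the hypothesis $D>(36h)^2$) ensures
\[
-\sqrt{D} \;<\; -34h \;\leq\; e-32h \;<\; e-28h \;<\; \sqrt{D}-60h \;<\; \sqrt{D}.
\]
Thus both $e-32h$ and $e-28h$ lie in $(-\sqrt{D},\sqrt{D})$, giving the required inequalities. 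There is no real obstacle here; the lemma is essentially a bookkeeping statement about the symmetry $e\leftrightarrow -e-4h$, and the hypothesis $D>(36h)^2$ is used exactly once, precisely to handle the lower side of $(e-32h)^2<D$.
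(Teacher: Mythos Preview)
Your proof is correct and is precisely the ``simple calculations'' the paper alludes to without writing out: the paper gives no proof beyond that phrase, and your argument via the involution $e\mapsto -e-4h$ on $\SD$ is the intended one. One trivial arithmetic slip: from $e<\sqrt{D}-36h$ you get $e-28h<\sqrt{D}-64h$, not $\sqrt{D}-60h$, but this has no effect on the conclusion.
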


The next proposition asserts that if $D$ is large then assumption of Proposition~\ref{prop:equiv:step8}
actually holds.

\begin{Proposition}
\label{prop:ends}
Assume that $D\geq (55h)^2$ if $h=1$ and $D\geq (63h)^2$ if $h=2$.
Then every element of $\SD$ is equivalent to an element of $\mathcal T^h_{D}$.
\end{Proposition}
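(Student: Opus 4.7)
The goal is to show every $e \in \SD$ is equivalent via butterfly moves to an element of $\mathcal T^h_D$. Under our hypothesis, the bad set $\SD \setminus \mathcal T^h_D$ splits into two narrow bands, a lower band $I_- = (-\sqrt D,\,24h-\sqrt D]$ and an upper band $I_+ = [\sqrt D - 36h,\,\sqrt D - 4h)$. Writing $\sigma(e) = -e-4h$ for the involution induced by the always-admissible move $B_\infty$, a direct calculation gives $\sigma(I_+) = (-\sqrt D,\,32h-\sqrt D]$, which splits as $I_-$ together with a sliver $(24h-\sqrt D,\,32h-\sqrt D]$ that lies entirely in $\mathcal T^h_D$ (as one verifies from $\sqrt D > 34h$, implied by the hypothesis). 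Hence after a possible application of $B_\infty$ we may assume $e \in I_-$.

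For such $e$ we plan to apply $F_7 = B_\infty B_7$, which shifts $e$ to $e + 24h \in (24h-\sqrt D,\,48h-\sqrt D]$; a direct inequality check using $\sqrt D > 42h$ confirms this target lies inside $\mathcal T^h_D$. The admissibility $|e+28h| < \sqrt D$ of the intermediate step $B_7$ is immediate in $I_-$. The only remaining obstruction is the coprimality condition from Proposition~\ref{prop:BM:transform:rule} and Lemma~\ref{lm:BM:preserve:S2}, namely $\gcd(7,w)=1$ for $h=1$ or $\gcd(7,w/2)=1$ for $h=2$, both of which reduce to $D \not\equiv e^2 \pmod 7$. When this fails, we will instead compose the smaller shifts $F_3$ and $F_5$ (by $8h$ and $16h$, respectively), whose admissibilities similarly reduce to $D \not\equiv e^2 \pmod 3$ or $\pmod 5$. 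A case analysis on the residue class of $(D,e)$ modulo $3\cdot 5\cdot 7 = 105$, in the same spirit as the strategy enumeration of Proposition~\ref{prop:equiv:step8}, will produce in every class an admissible sequence of at most two or three small-prime moves whose cumulative shift is at least $24h$, thereby ejecting $e$ from $I_-$ into $\mathcal T^h_D$. The slightly stronger lower bound $D \geq (63h)^2$ in the case $h=2$ compensates for the tighter coprimality condition $\gcd(q,w/2)=1$ coming from Lemma~\ref{lm:BM:preserve:S2}.

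The main technical difficulty will lie in this residue analysis: for each residue class of $(D,e) \bmod 105$ one must exhibit an admissible short sequence of small-prime butterfly moves with cumulative shift exceeding $24h$. The verification is mechanical but tedious, and will mirror the strategy enumeration carried out for Proposition~\ref{prop:equiv:step8}.
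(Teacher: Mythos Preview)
Your reduction to the lower band $I_-$ via the involution $\sigma$ is correct, but the proposed residue analysis modulo $105$ has a genuine gap. You appeal to the strategy enumeration of Proposition~\ref{prop:equiv:step8}, yet that proposition explicitly records two exceptional classes $(D,e)\equiv (4h^2,-2h)$ and $(4h^2,-10h)\pmod{105}$ for which \emph{no} strategy built from $F_{\pm 3},F_{\pm 5},F_{\pm 7}$ exists. In those classes one has $D\equiv e^2\equiv(e+4h)^2\pmod{3\cdot5\cdot7}$, so $3\cdot5\cdot7\mid w$ (for $h=1$; and $3\cdot5\cdot7\mid w/2$ for $h=2$), and hence none of $F_{\pm3},F_{\pm5},F_{\pm7}$ is admissible even as a first step from $e$. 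Since the band $I_-$ has width $24h<105$, you cannot sidestep these residue classes by choosing a different representative inside $I_-$; for any large $D\equiv4h^2\pmod{105}$ there will be choices of $e\in I_-$ landing in the bad class. Your plan, as stated, therefore cannot ``produce in every class an admissible sequence''.

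The paper circumvents this obstruction by an argument that does not rely on the primes $\{3,5,7\}$ alone. Given $f\le-2h$ with $f-24h\notin\SD$, one looks for \emph{any} prime $q\le 13$ (odd prime $q\le17$ when $h=2$) coprime to $w$; a single $F_q$ then moves $f$ strictly to the right while keeping $F_q(f)+36h<\sqrt D$, and one iterates. If \emph{every} such prime divides $w$, then $w\ge 2\cdot3\cdot5\cdot7\cdot11\cdot13$ (respectively $w\ge 2\cdot3\cdot5\cdot7\cdot11\cdot13\cdot17$), forcing $D\ge10^5$ (resp.\ $10^6$); in that range a result of McMullen furnishes some $q<5\log D$ with $\gcd(q,w)=1$, and one checks $20h\log D+36h<\sqrt D$. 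The thresholds $(55h)^2$ and $(63h)^2$ are calibrated precisely so that the first alternative goes through. Your proposal needs an analogous ``large $w$'' escape hatch to handle the case $105\mid w$; with only $q\in\{3,5,7\}$ it cannot close.
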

\begin{proof}[Proof of Proposition~\ref{prop:ends}]
Let $f\in \SD$. Since $f \sim -f - 4h$ we can assume $f \leq
-2h$.  If  $f>-6h$ then  the proposition is  clearly true,  therefore we
only have to consider the case $f\leq -6h$. Observe that if $ f\leq -6h$
then  $(f+32h)^{2} \leq (f-20h)^{2}$  and $(f+36h)^{2}  \leq (f-24h)^{2}$.
In particular $f  - 24h \in  \SD$ implies $f  + 32h \in \SD$. On the other hand,
since $f<0$, $f  - 24h \not \in  \SD$ if and only if $D \leq (f-24h)^{2}$.
Thus assume that
\begin{equation}
\label{eq:f}
f ^{2} < D \leq (f-24h)^{2}.
\end{equation}
We will  show that there always exists $e\in \SD$, $e\sim f$ with $e>f$
and  $e+36h <  \sqrt{D}$, which implies that $e+32h\in \SD$  by
Remark~\ref{rk:crietrion:ends}. If $e-24h \not \in \SD$ then
by definition, $e$ satisfies  the inequalities~(\ref{eq:f}) and thus we
can repeat the argument by replacing $f$ by $e$.

\medskip

If $h=1$ then $ D\geq 55^2$ and we have $f\leq 24-55=-31$.
If $h=2$ then $ D > (62h)^2$ and we have $f< 24h-62h=-38h=-76$.

If $h=1$, assume that  there exists prime $q  \leq  13$ such  that $\gcd(w,q)  =
1$. Then $f \sim F_{q}(f) > f$ and
\begin{equation*}
F_q(f)+36= f+4(q-1)+36 \leq -31 +48+36=53 < 55 \leq \sqrt{D}.
\end{equation*}
Hence $e = F_{q}(f)$ is convenient if $h=1$. Thus we may assume that
$w$  is divisible  by all primes $p\leq 13$. Thus
$D  \geq 4 \cdot  w  \geq 4 \cdot 2 \cdot 3 \cdot 5 \cdot 7 \cdot 11 \cdot 13 > 10^{5}$.

The same applies if $h=2$: assume that  there exists some {\em odd}
prime $q  \leq  17$ such  that $\gcd(w,2q)  = 2$. Then $f \sim F_{q}(f) > f$ and
\begin{equation*}
F_q(f)+36h= f+4h(q-1)+36h \leq -38h +64h+36h=62h < 63h \leq \sqrt{D}.
\end{equation*}
Hence $e = F_{q}(f)$ is convenient if $h=2$. Thus we may assume that
$w$  is divisible  by all {\em odd} primes $p\leq 17$. Thus (recall that $w$
is even if $h=2$):
$D  \geq 4 \cdot  w  \geq 4 \cdot 2\cdot 3 \cdot 5 \cdot 7 \cdot 11 \cdot 13 \cdot 17 > 10^{6}$.
By~\cite[Theorem~9.1]{Mc4} there is  an integer  $q$
relatively prime to $w$ such that
$$ 1 < q < \frac{3\log(w)}{\log(2)} \leq 5\log(D).
$$ Now $f\sim F_{q}(f)$ where
$$ f < F_{q}(f) = f + 4h(q-1) < 20h \cdot \log(D).
$$ Since for $D \geq 10^{5}$ if $h=1$ and $D \geq 10^{6}$ if $h=2$, we have
$$ F_q(f)+36h < 20h \cdot \log(D) + 36h < \sqrt{D}.
$$ This completes the proof of Proposition~\ref{prop:ends}.
\end{proof}
\subsection{Case $D \equiv 4h^2 \mod 105$}
Proposition~\ref{prop:equiv:step8} implies that if $D\not\equiv 4h^2 \mod 105$
then $e \in \mathcal{T}^h_D \Rightarrow e \sim e+8h$.
We now handle the case $D \equiv 4h^2\mod 105$.

We define
$$ \mathcal U^h_{D} =  \{ e \in \mathcal T^h_{D}, \ e  \not \equiv -2h \mod
105\},
$$

\begin{Lemma}
\label{lm:eq:UD}
Assuming $D \equiv 4h^2\mod 105$.
For  $D  >  (83h)^{2}  =  6889\cdot h^2$,  all elements  of  $\SD$  are
equivalent to an element of $\mathcal U^h_{D}$.
\end{Lemma}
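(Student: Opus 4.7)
The plan is to start from an arbitrary $e\in \SD$, invoke Proposition~\ref{prop:ends} to reduce to the case $e\in \mathcal T^h_D$, and if $e$ lies in the bad residue class $-2h\mod 105$ (which is exactly $\mathcal T^h_D\setminus \mathcal U^h_D$), apply a single carefully chosen move $F_q$ that exits this class while keeping the result in $\mathcal T^h_D$. Since $D>(83h)^2$ exceeds both thresholds $(55h)^2$ and $(63h)^2$, Proposition~\ref{prop:ends} applies; if the resulting $e\in \mathcal T^h_D$ already satisfies $e\not\equiv -2h\mod 105$ we are done, so assume the contrary. Combined with $D\equiv 4h^2\mod 105$ this yields $4hw=D-e^2\equiv 0\mod 105$, and because $\gcd(4h,105)=1$ we conclude $105\mid w$; in particular $3,5,7$ all divide $w$, so $F_3$, $F_5$, $F_7$ are all inadmissible. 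Using the involution $e\sim -e-4h$ induced by $B_\infty$ (which preserves the class $-2h\mod 105$) together with Lemma~\ref{lm:TD:dual}, we may further assume $e\leq -2h$.

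Next I would look for an odd prime $q$ obeying three conditions: (i) $q\nmid w$, so that $F_q$ is admissible; (ii) $4h(q-1)\not\equiv 0\mod 105$, which is equivalent to $q\notin\{3,5,7\}$ and $q\not\equiv 1\mod 105$, and guarantees $F_q(e)=e+4h(q-1)\not\equiv -2h\mod 105$; and (iii) $q<(\sqrt{D}-30h)/(4h)$, which by Remark~\ref{rk:crietrion:ends} and the bound $e\leq -2h$ puts $F_q(e)+32h$ into $\SD$. Together with the companion bound $F_q(e)-24h\in \SD$ inherited from $e-24h\in \SD$, (iii) places $F_q(e)$ in $\mathcal T^h_D$. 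Once such a $q$ is produced, $F_q(e)\sim e$ lies in $\mathcal T^h_D$ but no longer in the class $-2h\mod 105$, and hence in $\mathcal U^h_D$, as required.

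The technical heart of the lemma is the existence of this $q$, and this is exactly where the threshold $(83h)^2$ enters. The reasoning mirrors the closing dichotomy in the proof of Proposition~\ref{prop:ends}: either some prime $q\in\{11,13,17,\dots\}$ bounded by $(\sqrt{D}-30h)/(4h)$ fails to divide $w$, in which case (i)--(iii) hold at once (condition (ii) simplifies to $q\notin\{3,5,7\}$, since every prime below $211$ is not $\equiv 1\mod 105$); or else $w$ is divisible by all such primes, which combined with $105\mid w$ forces $D$ to be so large that McMullen's estimate \cite[Theorem~9.1]{Mc4} supplies a prime $q<3\log(w)/\log(2)$ coprime to $w$, and one checks that such a $q$ automatically meets (ii) and (iii). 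The main obstacle is the numerical calibration of the threshold $(83h)^2$ so that these two regimes seamlessly cover every admissible $D$; this is a finite but slightly delicate piece of bookkeeping, directly modeled on the corresponding step in \cite[Theorem~8.6]{Lanneau:Manh:H4}.
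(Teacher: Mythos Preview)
The overall plan—reduce via Proposition~\ref{prop:ends} to $\mathcal T^h_D$, observe that the bad class $e\equiv -2h\pmod{105}$ forces $105\mid w$, symmetrize to $e\le -2h$ via Lemma~\ref{lm:TD:dual}, and then exit the class with a single move $F_q$—is exactly the paper's approach, and your conditions (i)--(iii) on $q$ are the right ones. The gap is in how you actually produce such a $q$.

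You appeal to \cite[Theorem~9.1]{Mc4}, but that result only supplies an \emph{integer} coprime to $w$ with $q<3\log w/\log 2$; it gives no control whatsoever over $q\bmod 105$, so the claim that ``such a $q$ automatically meets (ii)'' is unjustified. Your first-regime parenthetical (``every prime below $211$ is not $\equiv 1\bmod 105$'') has the same defect: it only helps once you know the prime you find is $<211$, and nothing in your dichotomy guarantees that for large $D$. The paper closes this gap by invoking a different tool, \cite[Theorem~9.4]{Mc4} together with Kanold's bound on the Jacobsthal function: after a two-stage escalation (first primes $\le 13$, then primes $\le 73$, forcing $w>10^{28}$ in the residual case), one finds $q\equiv 13\pmod{210}$ with $\gcd(q,w)=1$ and $q\le 210\,J(w//210)\le 210\,w^{1/3}$. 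The congruence $q\equiv 13\pmod{210}$ builds in (ii), and the $w^{1/3}$ bound yields (iii) once $w>10^{28}$. If you want to avoid the Jacobsthal machinery, an alternative route is to show your second regime is actually \emph{vacuous}: already at the threshold $D>(83h)^2$ one has $(\sqrt D-30h)/(4h)>13$, and if $3,5,7,11,13$ all divided $w$ then $w\ge 15015>D/(4h)$, a contradiction that propagates for all larger $D$. But that argument has to be made explicitly; Theorem~9.1 alone does not do the job.
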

\begin{proof}
Let $e\in \SD$. Since $D>(83h)^2$, Proposition~\ref{prop:ends}
implies  that one  can assume  $e\in  \mathcal T^h_{D}$.  Let us  assume
$e\not \in  \mathcal U^h_{D}$, {\it i.e.}   $e \equiv -2h  \mod 105$.
By Lemma~\ref{lm:TD:dual}, one can  assume $e\leq  -2h$.
To prove the lemma, we need the following

\begin{Lemma}
\label{eq:claim}
Let $w=\frac{D-e^2}{4h}$. For $D > (83h)^{2}$, $D\equiv 4h^2 \mod 105$, and $e \in \mathcal{T}^h_D, \, e \leq -2h$,
there exists $q\in \N$ such that
$$
q\not \equiv 1 \mod 105, \quad \gcd(w,q) = 1, \textrm{ and } \quad  4qh + 31h < \sqrt{D}.
$$
\end{Lemma}

Let us first complete the proof of Lemma~\ref{lm:eq:UD}. According to Lemma~\ref{eq:claim}, we can pick some  $q\in \N$  such that
$$
\gcd(w,q)=1 \text{ and } F_{q}(e) + 36h = e+4h(q-1) + 36h = e + 4qh + 32h \leq 4qh + 30h < \sqrt{D}
$$
Thanks to Remark~\ref{rk:crietrion:ends},  we know that $F_{q}(e) +36h< \sqrt{D}$ implies $F_{q}(e) + 32h \in \SD$.
Since $e\in \mathcal{T}^h_D$, it follows that $F_{q}(e) \in \mathcal T^h_{D}$.
Since $F_{q}(e) - e \equiv 4h(q-1) \not\equiv  0  \mod  105$, we  have  $F_{q}(e)  \not  \equiv -2  \mod  105$,
i.e.  $F_{q}(e)  \in  \mathcal  U_{D}$.
We conclude  by  noting  that if $h=2$ then $\gcd(w,q) = 1$ implies $\gcd(w,qh)=2$,
which implies $e\sim F_{q}(e)$. Of course if $h=1$ the same conclusion
applies. Lemma~\ref{lm:eq:UD} is now proved.
\end{proof}

To complete the proof of our statement, it remains to show

\begin{proof}[Proof of Lemma~\ref{eq:claim}]
One has to show that there exists $q \in \N$ such that
\begin{equation}
\label{eq:lemma}
\left\{ \begin{array}{l}  \gcd(w,q) = 1,  \\ q\not \equiv 1  \mod 105,
  \\ 4qh + 31h < \sqrt{D}.
\end{array} \right.
\end{equation}
Since  $D > (83h)^{2}$  the last  two conditions  of~(\ref{eq:lemma}) are
automatic for $q =  2,\ 3,\ 5,\ 7,\ 11,\ 13$. Thus one can assume
$w$  is divisible  by all of these primes, otherwise the lemma is proved.
For both values of $h$, we have $wh \geq 2\cdot 3\cdot 5\cdot 7 \cdot 11\cdot 13 = 30030$,
thus $\sqrt{D}=\sqrt{e^{2}+4\cdot w\cdot  h} > 346$.
\medskip

Again, the last two conditions  are fulfilled for all primes less than
$73$ (odd primes if $h=2$); thus  the claim  is proven  unless $w$ is  divisible by  all of
these $21$ primes, in which case we have $w > 10^{28}$.
\medskip

To find   a   good    $q$   satisfying   the   first   condition
of~(\ref{eq:lemma}),  we will  use the  Jacobsthal's  function $J(n)$,
that  is  defined  to  be  largest gap  between  consecutive  integers
relatively prime  to $n$.   A convenient estimate
for  $J(n)$ is provided  by Kanold:  If none  of the
first   $k$   primes   divide   $n$,   then   one   has   $J(n)   \leq
n^{\log(2)/\log(p_{k+1})}$,   where    $p_{k+1}$   is   the   $(k+1)th$
prime.
\medskip

We will also use  the following inequality that can be found
in~\cite{Mc4}   (Theorem~$9.4$):  \\  For   any  $a,n,w\geq   1$  with
$\gcd(a,n) =  1$ there is a  positive integer $q \leq  n J(w//n)$ such
that
$$
q \equiv a \mod n \textrm{ and } \gcd(q,w) = 1,
$$
where $w//n$  is obtained  by removing  from $w$  all  primes that
divide $n$. \bigskip

Applying  the above inequality with $a=13$ and  $n=210$, one can
find a positive integer $q$ satisfying
$$
q \leq 210J(w//210), \quad \gcd(w, q) = 1, \textrm{ and } q \equiv 13 \mod 210.
$$
In particular $q\not \equiv   1  \mod  105$,   and  thus  the  first   two  conditions
of~(\ref{eq:lemma})   are  satisfied.   Let  us   see  for   the  last
condition. \medskip

\noindent Since the  first prime $p_{k+1}$ that divide  $w//210$ is $13$, Kanold's estimates gives
$$ J(w//210) \leq (w//210)^{\log(2)/\log(p_{k+1})} \leq (w//210)^{1/3}
\leq w^{1/3}.
$$
Hence
$$
4\cdot qh + 31h \leq 4\cdot 210h \cdot w^{1/3} + 31h.
$$
But since $w > 10^{28}$ and $D\geq 4w$, we have:
$$
4\cdot 210h \cdot w^{1/3} + 31h < w^{1/2}h \leq \sqrt{D}.
$$
The lemma is proved.
\end{proof}


\begin{proof}[Proof of Theorem~\ref{theo:H6:connect:SD} when $h=1$]
We will assume that $D\geq 83^2$ (by Lemma~\ref{lm:exceptionnal:cases}).
Thanks  to Proposition~\ref{prop:ends},  every component  of $\mathcal
S^1_{D}$ meets $\mathcal T^1_{D}$. Since $D=e^2+4w$ the possible values of
$D$ modulo $8$ are
$$
D \equiv 0,1,4,5 \mod 8.
$$
We will examine each case separately.

We define
$$
\begin{array}{lll}
\mathcal T^{1,i}_{D} = \{ e \in  \mathcal T^1_{D},\ e \equiv 2i \mod 8\} & i=0,1,2,3 & \textrm{ if } D \textrm{ is even,} \\
\mathcal T^{1,i}_{D} = \{ e \in \mathcal T^1_{D},\ e \equiv  1+2i \mod 8\} &i=0,1,2,3 & \textrm{ if }  D \textrm{ is odd.}  \\
\end{array}
$$
We first assume $D\not \equiv 4 \mod 105$.
By Proposition~\ref{prop:equiv:step8} we  have  $e  \sim  e +  8$
whenever $e$ is in $\mathcal T^1_{D}$.  Therefore, all
elements of  ${\mathcal T}^{1,i}_{D}$ are equivalent for $i=0,1,2,3$.
Thus Proposition~\ref{prop:ends} implies
${\mathcal S}^1_{D}$ has at most four components.
Now, for each values of $D \mod 8$, we
connect the sets ${\mathcal T}^{1,i}_{D}$ together.
\begin{enumerate}
\item If $D\equiv 0  \mod 8$ then $0\in {\mathcal T}^{1,0}_{D}$ is connected to $B_1(0) = 0-4\times 1 =
-4  \in  {\mathcal  T}^{1,2}_{D}$,   and  $-10\in  {\mathcal  T}^{1,3}_{D}$  is
connected to $B_2(-10) = 10-4\times  2 =  2 \in {\mathcal T}^{1,1}_{D}$ (observe that $w=(D-(-10)^2)/4$ is odd
so that $B_2(-10) \in \mathcal S^1_D$).

\item if $D\equiv 1  \mod 8$ then $1\in {\mathcal T}^{1,0}_{D}$ is  connected to
$B_1(1)= -1-4 \times  1 =  -5\in {\mathcal T}^{1,1}_{D}$, and $5\in {\mathcal T}^{1,2}_{D}$ is  connected to
$B_1(5)\sim -5-4\times 1 = -9\in {\mathcal T}^{1,3}_{D}$.

\item If $D\equiv 4  \mod 8$ then $0\in  {\mathcal
T}^{1,0}_{D}$ is connected to $B_1(0)= -4 \in {\mathcal T}^{1,2}_{D}$.

\item If $D\equiv 5  \mod 8$ then $1\in  {\mathcal
T}^{1,0}_{D}$ is connected to $B_1(1)= -5 \in {\mathcal T}^{1,1}_{D}$, and
$5\in  {\mathcal T}^{1,2}_{D}$ is  connected to  $B_1(5)= -9  \in {\mathcal
T}^{1,3}_{D}$. Finally,  $1\in {\mathcal T}^{1,0}_{D}$ is connected to
$B_2(1,1)= -9 \in {\mathcal T}^{1,3}_{D}$ since $\frac{D-1^2}{4}$ is odd.

\end{enumerate}

We now assume $D\equiv 4 \mod 105$.
Recall  that in  this  case we  have  defined $\mathcal{U}^1_D:=\{e  \in
\mathcal{T}^1_D,  \; e  \not\equiv -2  \mod 105\}$.  We consider the partition of
$\mathcal{U}^1_D$ by $\mathcal U^{1,i}_{D} = \mathcal U^1_{D} \cap \mathcal T^{1,i}_{D}$.
It is easy to check that all  elements  of $\mathcal  U^{1,i}_{D}$  are  equivalent in  $\mathcal S^1_{D}$.
Indeed we can apply Proposition~\ref{prop:equiv:step8}.  Since $D  \equiv 4
\mod  105$ and $e  \not \equiv  -2 \mod  105$, if  we can  not conclude
directly that  $e \sim e+8$  then this means  that $e \equiv  -10 \mod
105$. But in this case, since
$$ e^{2} \equiv 0 \not \equiv D \equiv 1 \mod 5
$$  one can  apply the  move $F_{q}$  with $q=5$.  This gives  $e \sim
F_{5}(e) = e + 16$. This proves the lemma. \medskip

Now by Lemma~\ref{lm:eq:UD} we  only need
to   connect    elements   in   $\mathcal{U}^{1,i}_D$,   with   different
$i$.  Actually,  we  can  use  the  same butterfly moves as above (when $D
\not\equiv 4  \mod 105$)  since they do  not involve any  element $e\in
\mathcal{T}^{1,i}_D$ such that $e \equiv  -2 \mod 105$. This completes the
proof of Theorem~\ref{theo:H6:connect:SD} when $h=1$.
\end{proof}

\begin{proof}[Proof of Theorem~\ref{theo:H6:connect:SD} when $h=2$]
Again we will assume that $D\geq (83\cdot 2)^2$ (by Lemma~\ref{lm:exceptionnal:cases}).
Thanks  to Proposition~\ref{prop:ends},  every component  of $\mathcal
S^2_{D}$ meets $\mathcal T^2_{D}$. Recall that if $e\in \mathcal S^2_D$ then $e^2\equiv D \mod 16$.
Set
$$
\begin{array}{lll}
\mathcal T^{2,i}_{D} = \{ e \in  \mathcal T^2_{D},\ e \equiv 1+2i \mod 16\} & i=0,3,4,7 & \textrm{ if } D \equiv 1 \mod 16, \\
\mathcal T^{2,i}_{D} = \{ e \in \mathcal T^2_{D},\ e \equiv  1+2i \mod 16\} &i=1,2,5,6 & \textrm{ if }  D \equiv 9 \mod 16.  \\
\end{array}
$$
We first assume $D\not \equiv 4\times 2^2=16 \mod 105$.
By Proposition~\ref{prop:equiv:step8} we  have  $e  \sim  e +  16$
whenever $e$ is in $\mathcal T^2_{D}$.  Therefore all
elements of  ${\mathcal T}^{2,i}_{D}$ are equivalent.
Thus Proposition~\ref{prop:ends} implies
${\mathcal S}^2_{D}$ has at most four components.
Now we connect the elements of
${\mathcal T}^{2,i}_{D}$.

If $D\equiv 1 \mod 16$ then
\begin{enumerate}
\item $1\in {\mathcal T}^{2,0}_{D}$ is  connected to $B_1(1)= -1-4 \times 2  = -9 \in {\mathcal T}^{2,3}_{D}$.
\item $9\in {\mathcal T}^{2,4}_{D}$ is  connected to $B_1(9)= -9-4 \times 2  = -17 \in {\mathcal T}^{2,7}_{D}$.
\item Set $w_1=\frac{D- 1^2}{16}$ and $w_9=\frac{D-9^2}{16}$. Since we have $w_1-w_9=5$, one of $w_1$ and $w_9$ is odd.
If follows that we can apply the Butterfly move $B_2$ to either $1\in \mathcal{T}^{2,0}_D$ or  $9\in \mathcal{T}^{2,4}_D$.
In the first case, we get $B_2(1)=-1-16=-17 \in {\mathcal T}^{2,7}_{D}$ and
in the second case $B_2(9)=-9-16=-25\in {\mathcal T}^{2,3}_{D}$. Hence all the sets $\mathcal{T}^{2,i}$ are connected.
\end{enumerate}

We now turn to the case  $D\equiv 9 \mod 16$.
\begin{enumerate}
\item $3\in {\mathcal T}^{2,1}_{D}$ is  connected to $B_1(3)= -3-4 \times 2  = -11 \in {\mathcal T}^{2,2}_{D}$.
\item $11\in {\mathcal T}^{2,5}_{D}$ is  connected to $B_1(11)= -11-4 \times 2  = -19 \in {\mathcal T}^{2,6}_{D}$.
\item Set $w_3=\frac{D-3^2}{16}$ and $w_{11}=\frac{D-11^2}{16}$. Since $w_3-w_{11}=7$, one of $w_3$ and $w_{11}$ is odd.
Thus one can apply $B_2$ to either $3\in \mathcal{T}^{2,1}_D$ or $11\in \mathcal{T}^{2,5}_D$.
In the first case we draw $B_2(3)=-3-16=-19 \in {\mathcal T}^{2,6}_{D}$ and
in the second case $B_2(11)=-11-16=-27\in {\mathcal T}^{2,2}_{D}$.
Hence  all the $\mathcal{T}^{2,i}$ are connected.
\end{enumerate}

If $D\equiv 4 \mod 105$, we apply the same idea as in the case $h=1$.
We consider the partition of  $\mathcal{U}^2_D$ by $\mathcal U^{2,i}_{D} = \mathcal U^2_{D} \cap \mathcal T^{2,i}_{D}$.
All elements of $\mathcal  U^{2,i}_{D}$ and we can connect elements   in   $\mathcal{U}^{2,i}_D$,    with   different
$i$. We  can  use  the  same butterfly moves as above, i.e. when $D \not\equiv 4  \mod 105$,
since they do  not involve any  element $e\in \mathcal{T}^{2,i}_D$ such that $e \equiv  -2\cdot 2 = -4 \mod 105$.
This completes the proof of Theorem~\ref{theo:H6:connect:SD} when $h=2$.
\end{proof}

\subsection{Components of $\Pcal^A_D$: proof of Theorem~\ref{theo:H6:connect:PD}}
By Theorem~\ref{theo:H6:connect:SD}, we only need to discuss three cases
$$
 \left\{ \begin{array}{l} D \geq 12 \qquad \textrm{and} \qquad D \equiv 4 \mod 8.  \\
D \geq 17 \qquad \textrm{and} \qquad D \equiv 1 \mod 8 \\
 D \textrm{ belongs to the sets of exceptional discriminants of Theorem~\ref{theo:H6:connect:SD}}.
\end{array}
\right.
$$
We  first examine  the  generic  cases.

\subsubsection{Proof of Theorem~\ref{theo:H6:connect:PD} when $D\equiv 4 \mod 8$}

Since any element of $\Pcal^A_D$ is equivalent to an element of $\mathcal S^1_D$
if is sufficient to connect  the two components
$\allowbreak \{ e \in \mathcal{S}^1_D, \ e \equiv 2 \mod 8\}$ and
$\allowbreak \{e \in \mathcal{S}^1_D, \ e \equiv -2 \mod 8 \}$
of $\mathcal S^1_D$  by using non-reduced elements of
$\mathcal P^A_{D}$.

\begin{itemize}
\item If $D=12+16k$ (with $k\geq 2$), then $q=2$ is admissible for $e=-2$ and
$$
[-2] \overset{B_{2}}{\longrightarrow} (2k-3,2,0,-6) \overset{B_{\infty}}{\longrightarrow} (2k+1,2,0,-5)
\overset{B_{1}}{\longrightarrow} [-6]
$$
connects the two components since $-6\equiv +2 \mod 8$.

\item If $D =  4 + 32k$. One can assume $k\geq 4$ since $D\not \in \{36,68,100\}$.
Hence $q=2$ is admissible for $e=2$ and
$$
[2] \overset{B_{2}}{\longrightarrow} (4k-12,2,1,-10) \overset{B_{2}}{\longrightarrow} (4k-4,2,1,-6)
\overset{B_{1}}{\longrightarrow} [-2]
$$
connects the two components.

\item If $D =  20 + 32k$. One can assume $k\geq 3$ since $D\not \in \{52,84\}$.
Hence $q=2$ is admissible for $e=2$ and
$$
[2] \overset{B_{2}}{\longrightarrow} (4k-10,2,1,-10) \overset{B_{2}}{\longrightarrow} (2k-1,4,0,-6)
\overset{B_{1}}{\longrightarrow} [-10]
$$
connects the two components since $-10\equiv -2 \mod 8$.
\end{itemize}

\subsubsection{Proof of Theorem~\ref{theo:H6:connect:PD} when $D\equiv 1 \mod 8$}
Recall that for $h=1,2$, we have
$$
\Pcal_D^{A_h}:=\{p=(w,h,t,e) \in \Pcal_D^A, \; \mbox{the equivalence class of $p$ contains an element of } \Sc_D^h\}
$$
and $\Pcal_D^A=\Pcal_D^{A_1}\sqcup \Pcal_D^{A_2}$ (see Lemma~\ref{lm:reduced} and Lemma~\ref{lm:D1mod8:S1:no:connect:S2}).

By Theorem~\ref{theo:H6:connect:SD} $\Sc^1_D$ contains two components $\{e \in \Sc^1_D, \; e \equiv 1,3 \mod 8\}$ and
$\allowbreak \{e \in \Sc^1_D, \ e \equiv -1,-3 \mod 8\}$.
We show that those two components  can be connected through $\Pcal_D^{A_1}$.
\begin{itemize}
\item If $D=1+16k$ (with $k\geq  3$), then $[-5]\in {\mathcal S}^1_{D}$. Thus
$$
[-5] \overset{B_{2}}{\longrightarrow} (2k-1,2,0,-3) \overset{B_{\infty}}{\longrightarrow} (2k-3,2,0,-5)
\overset{B_{1}}{\longrightarrow} [-3]
$$
connects the two components since $-5\equiv 3 \mod 8$.

\item If $D =  9 + 16k$ (with $k\geq 3$ since $D\neq 41$), then
$[-7]\in {\mathcal S}^1_{D}$. Thus
$$
[-7] \overset{B_{2}}{\longrightarrow} (2k+1,2,0,-1) \overset{B_{\infty}}{\longrightarrow} (2k-5,2,0,-7)
\overset{B_{1}}{\longrightarrow} [-1]
$$
connects the two components since $-7\equiv 1 \mod 8$.
\end{itemize}
Since $\Sc^2_D$ contains a single component by Theorem~\ref{theo:H6:connect:SD}, this proves the theorem for non exceptional values of $D$.
\subsubsection{Proof of Theorem~\ref{theo:H6:connect:PD} for $D$ in the sets of exceptional discriminants of Theorem~\ref{theo:H6:connect:SD}}
\label{sec:except:0}

The strategy is to connect ``extra'' components of $\mathcal S^1_D$ and $\mathcal S^2_D$  by using moves through $\mathcal  \Pcal^A_D$.

We first prove the statement on the components of $\Pcal_D^A$, and $\Pcal^{A_1}_D$ if $D \equiv 1 \mod 8$, for
$$ D \in
\left\{
\begin{array}{l}
12,16,17,20,25,28,36,73,88,97,105,112,121,124,136,145,148, \\
169,172,184,193,196,201,217,220,241,244,265,268,292,304, \\
316, 364,385,436,484,556,604,676,796,844,1684
\end{array}
\right\}.
$$
For $D\in\{12,16,17,25\}$, one can check by hand that $\Sc^1_D$ has only one component.
For $D \in \{20,28,36\}$, $\mathcal S^1_{D}$ has two components ${\{-2\} \textrm{ and } \{-4,0\}}$. So there is nothing to prove.

The first non trivial discriminant to discuss is $D=73$. We directly check that  $\mathcal S^1_{73}$ has three components, namely
$\{-5,1\}, \{-7,3\},\{-3,-1\}$. We can connect them through $\Pcal^A_{73}$ by
$$
\begin{array}{l}
{ [-5] \overset{B_3}{\longrightarrow} (2,3,0,-7) \overset{B_{\infty}}{\longrightarrow} (4, 3, 0, -5)\overset{B_{1}}{\longrightarrow} [-7]}\\
{ [-7] \overset{B_2}{\longrightarrow} (9, 2, 0, -1) \overset{B_{\infty}}{\longrightarrow} (3, 2, 0, -7)\overset{B_{1}}{\longrightarrow} [-1]}\\
\end{array}
$$
(recall that for $e\in \mathcal S^1_D$ we define $[e]=(w,1,0,e)\in \mathcal \Pcal^A_D$,
where $w=(D^{2}-e^{2})/4$).

The next discriminant to consider is $D=88$. This time $\mathcal S^1_{88}$ has three components, namely
$\{0, -4\}, \{-8, 4\}, \{2, -6, -2\}$. We can connect $\{0, -4\}$ and $\{-8, 4\}$ through $\Pcal^A_{88}$ by
$$
{[-8] \overset{B_{4}}{\longrightarrow} (3, 2, 0, -8)\overset{B_{1}}{\longrightarrow} [0]}
$$
This proves Theorem~\ref{theo:H6:connect:PD} for $D=88$.
Using computer assistance, we can repeat the above discussion for all the remaining discriminants
$$ D   \in
\left\{
\begin{array}{l}
97,105,112,121,124,136,145,148, 169,172,184,193,196,201,217,220,241,\\
244,265,268,292,304, 316, 364,385,436,484,556,604,676,796,844,1684\\
\end{array}
\right\}
$$
to show   that
\begin{itemize}
\item $\Pcal^A_D$ has one component when $D \equiv 5$ $\mod 8$,
\item $\Pcal^A_D$ has two components when $D \equiv 0$ or $4$ $\mod 8$,
\item $\Pcal_D^{A_1}$ has one component   when $D \equiv 1$ $\mod 8$.
\end{itemize}

We now turn to the statement on $\Pcal^{A_2}_D$, for
$$
D \in
\left\{
17, 25, 33, 49,113, 145, 153, 177, 209, 217, 265, 273, 313, 321, 361, 385, 417, 481, 513
\right\}
$$
We check directly that $\mathcal S^2_{D}$ is empty for $D \in \{17, 25, 33, 49\}$. The other components are given in the table below.
$$
\begin{array}{|c|c||c|c|}
\hline
D & \textrm{components of }\mathcal S^2_D &D & \textrm{components of }\mathcal S^2_D \\
\hline
113& {\scriptstyle\{-7, -1\}, \{1, -9\}} & 313& {\scriptstyle\{3, -11\}, \{-13, -3, 5, -5\} }\\
145& {\scriptstyle\{-7, -1\}, \{1, -9\}} &321& {\scriptstyle\{1, -7, -9, 9, -1, -17\}, \{-15, 7\} }\\
153& {\scriptstyle\{3, -11\}, \{-5, -3\}} &361& {\scriptstyle\{3, -3, -11, -5\}, \{-13, 5\} }\\
177& {\scriptstyle\{-7, -1\}, \{1, -9\}} &385&{\scriptstyle \{1, -15, 7, -7, -9, -1\}, \{9, -17\}} \\
209& {\scriptstyle\{-7, -1\}, \{1, -9\}} &417& {\scriptstyle\{1, -7, -9, 9, -1, -17\}, \{-15, 7\} }\\
217& {\scriptstyle\{3, -11\}, \{-13, -3, 5, -5\}} &481& {\scriptstyle\{1, -15, 7, -7, -9, -1\}, \{9, -17\}} \\
265& {\scriptstyle\{3, -3, -11, -5\}, \{-13, 5\}} &513& {\scriptstyle\{1, -7, -9, 9, -1, -17\}, \{-15, 7\}} \\
273& {\scriptstyle\{1, -9\}, \{-15, -7, -1, 7\}} && \\
\hline
\end{array}
$$
For instance, for $D=217$ one can connect the two components of $\mathcal S^2_{217}$ through $\Pcal^A_{217}$ by
$$
{[-13] \overset{B_{3}}{\longrightarrow} (4, 6, 0, -11)\overset{B_{\infty}}{\longrightarrow} (2, 6, 0, -13)\overset{B_{1}}{\longrightarrow}[-11]. }
$$
(here, for $e\in \mathcal S^2_D$ we define $[e]=(w,2,0,e)\in \mathcal \Pcal^A_D$, where $w=(D^{2}-e^{2})/8$). This shows that $\Pcal^{A_2}_{217}$ has one component proving Theorem~\ref{theo:H6:connect:PD} for this case.
Again, we easily show by using computer assistance that $\Pcal^{A_2}_D$ is non empty and has one component for
$$
D \in \{217, 273, 321, 361, 385, 417, 513\}
$$
For the discriminants in $\mathrm{Exc}_2:=\{113,145, 153, 177, 209,265,313,481\}$, $\Pcal^{A_2}_D$ actually has two components.

\section{Exceptional values of $D$}

In this section, we discuss the particular cases of Theorem~\ref{thm:D1mod8:connect:PA1:PA2} and Theorem~\ref{thm:D:even:n:sq}. To this purpose, we will need several tools that we detail in the coming section.

\subsection{Tools for exceptional values of $D$}
\label{sec:except:tools}

\begin{Lemma}\label{lm:connect:PA:PB:D:nonsq}
Fix a discriminant $D$ which is not a square.
Let $(X,\omega)=X_D(w,h,0,e)$ be the prototypical surface associated with a prototype $p=(w,h,0,e)$ in either $\Sc^1_D$ or $\Sc^2_D$.
Then $(X,\omega)$ admits a cylinder decomposition in Model B in the direction $\theta$ with slope $\frac{h}{\lambda}$ (see Figure~\ref{fig:reducedA:to:B}).
Let $(w',h',t',e')\in\Pcal^B_D$ be the prototype of the corresponding cylinder decomposition. Then we have
\begin{itemize}
  \item If $(w,h,0,e) \in \Sc^1_D$, that is $h=1$ and $w=\frac{D-e^2}{4}$, then
  \begin{equation}\label{eq:S1:to:B}
  \frac{\lambda'}{w'}=\frac{\lambda+n}{\lambda+n+1},
  \end{equation}
  where $n=\lfloor \frac{w}{\lambda} \rfloor =\lfloor \frac{\sqrt{D}-e}{2} \rfloor$.
  \item If $(w,h,0,e) \in \Sc^2_D$, that is $h=2$ and $w=\frac{D-e^2}{8}$ is even, then
  \begin{equation}\label{eq:S2:to:B}
  \frac{\lambda'}{w'}=\frac{2\lambda+2(2n+\epsilon)}{2\lambda+2(2n+1+\epsilon)},
  \end{equation}
  where $n=\lfloor \frac{w}{\lambda}\rfloor=\lfloor \frac{\sqrt{D}-e}{4}\rfloor$, and
  $$
  \epsilon=\left\{
  \begin{array}{cl}
  1  & \text{ if } \frac{w}{\lambda} -\lfloor \frac{w}{\lambda} \rfloor > \frac{1}{2},\\
  0  & \text{ if } \frac{w}{\lambda} -\lfloor \frac{w}{\lambda} \rfloor < \frac{1}{2}.
  \end{array}
  \right.
  $$
\end{itemize}
\end{Lemma}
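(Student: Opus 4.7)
My plan is to exhibit the Model~B cylinder decomposition in direction $\theta$ of slope $h/\lambda$ on the Model~A prototypical surface $X_D(w,h,0,e)$ by tracing explicit saddle connections, and then read off the new prototype $(w',h',t',e')$ by computing the area of a strictly semi-simple cylinder and applying Lemma~\ref{lm:cyl}.

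First I will trace a geodesic in direction $\theta$ starting from the singularity on the boundary of one of the non-simple horizontal cylinders $C_{2,j}$ (whose dimensions are $w/2 \times h/2$ with vanishing twist). Because the slope is $h/\lambda$, each horizontal displacement $\lambda/2$ produces a vertical rise $h/2$. Since $t=0$, vertical identifications across adjacent cylinders align cleanly, so the trajectory closes up at a singular point after a controlled number of horizontal wraps: roughly $n+1$ wraps in Case~1, with $n=\lfloor w/\lambda\rfloor$, and $2n+1+\epsilon$ wraps in Case~2, where $\epsilon \in \{0,1\}$ records whether the fractional part of $w/\lambda$ lies below or above $1/2$. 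This exhibits a saddle connection $\sigma$, and on one side of $\sigma$ I can identify a cylinder $\mathcal{C}$ one of whose boundary components consists solely of $\sigma$; this $\mathcal{C}$ is strictly semi-simple. By Remark~\ref{rmk:cond:prot:A:B} the decomposition in direction $\theta$ is then of Model~B, so after normalizing by some $g \in \GL^+(2,\R)$ we have $g\cdot(X,\omega) = X_D(w',h',t',e')$ with $(w',h',t',e') \in \Pcal^B_D$.

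Second, I will compute both $\mathrm{Area}(\mathcal{C})$ and the length of the core curve of $\mathcal{C}$ directly from the trajectory: the width of $\mathcal{C}$ equals the length of $\sigma$ (a Pythagorean expression in $\lambda, h, n$, and in Case~2 also $\epsilon$), and its height equals the perpendicular distance from $\sigma$ to the opposite boundary. By Lemma~\ref{lm:cyl}, $\mathrm{Area}(\mathcal{C}) = \lambda\lambda'/4$, which pins down $\lambda'$. The length of the core curve of $\mathcal{C}$, once rescaled by the horizontal component of $g$, equals $w'$; dividing yields $\lambda'/w'$ after a short algebraic simplification using the defining identity $\lambda^2 = e\lambda + wh$. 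This produces the formulas \eqref{eq:S1:to:B} and \eqref{eq:S2:to:B}.

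The main obstacle is Case~2 ($h=2$), where $\epsilon$ bifurcates the argument into two genuinely distinct combinatorial subcases according to whether the closing saddle connection lands on an ``even'' or ``odd'' vertical lattice line inside the non-simple cylinder. Each subcase must be analyzed separately, and throughout I must ensure compatibility with the Prym involution $\tau$, so that $\tau(\mathcal{C})$ is the companion strictly semi-simple cylinder of the Model~B decomposition.
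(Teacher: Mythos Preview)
Your route through Lemma~\ref{lm:cyl} is different from the paper's, and it contains a real gap at the $w'$ step. In the normalized Model~B picture the strictly semi-simple cylinders are the \emph{small} ones, of circumference $\lambda'/2$ and height $\lambda'/2$; the large cylinders (circumference $w'/2$) have two saddle connections on each boundary and are not semi-simple. So the core curve of your $\mathcal{C}$, after applying $g$, has length $\lambda'/2$, not $w'$. Lemma~\ref{lm:cyl} hands you $\lambda'$ from $\Aa(\mathcal C)=\lambda\lambda'/4$, but nothing about $\mathcal C$ alone determines $w'$: from $\lambda'$ you only recover $e'$ and the product $w'h'$. To get the ratio $\lambda'/w'$ you must bring in a second piece of data, for instance the circumference of the companion large cylinder in direction $\theta$, or equivalently the length of a second saddle connection.

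The paper sidesteps this by never invoking Lemma~\ref{lm:cyl}. It first rules out a two-cylinder decomposition using that $D$ is not a square (you should say this explicitly), then distinguishes Model~B from Model~A by counting saddle connections in direction $\theta$ of length $\tfrac12|\delta_0|$, where $\delta_0$ is the saddle connection through the regular Prym fixed point: there are exactly two such short saddle connections $\delta_1,\delta_2$, whereas Model~A would force four. For the ratio, the paper looks at the boundary of the semi-simple cylinder that is the union $\delta_1\cup\delta$ of two saddle connections, and reads off
\[
\frac{w'-\lambda'}{\lambda'}=\frac{|\delta_1|}{|\delta_1|+|\delta|}=\frac{(\delta_1)_y}{(\delta_1)_y+(\delta)_y},
\]
so that $\lambda'/w'$ is obtained directly from the $y$-components of $\delta_1$ and $\delta$ on the original surface. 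These $y$-components are exactly what your wrap-counting produces ($n$, and in Case~2 also $\epsilon$), so once you track both saddle connections rather than just $\sigma$, your combinatorics should plug straight into this formula; the area computation becomes unnecessary. Also, your citation of Remark~\ref{rmk:cond:prot:A:B} for the Model~B conclusion is off: that remark gives the numerical condition on $(w,h,t,e)$, not the characterization via strictly semi-simple cylinders.
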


\begin{Remark}
  If $D$ is a square, $(X,\omega)$ may have a two-cylinder decomposition in the direction $\theta$.
\end{Remark}

\begin{figure}[htb]
\begin{minipage}[t]{0.4\linewidth}
\centering
\includegraphics[width=3.5cm]{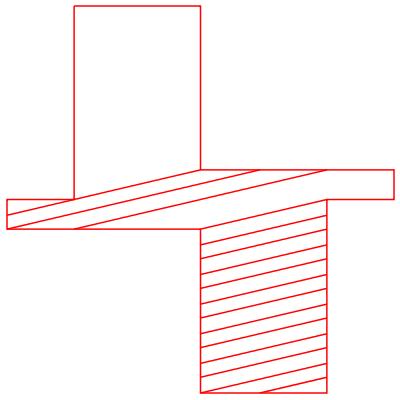}
\end{minipage}
\begin{minipage}[t]{0.4\linewidth}
\centering
\includegraphics[width=3.5cm]{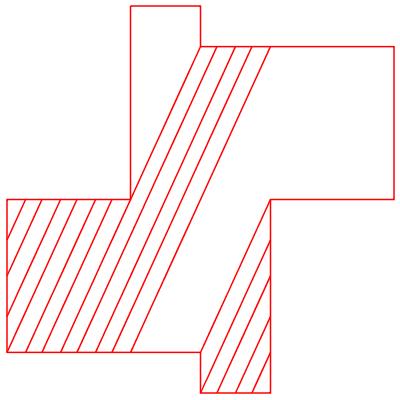}
\end{minipage}

  \caption{Cylinders in direction with slope $\frac{h}{\lambda}$ for $p\in \Sc^1_D$ (left) and $p\in \Sc^2_D$ (right).}
  \label{fig:reducedA:to:B}
\end{figure}

\begin{proof}
Since $D$ is not a square, $(X,\omega)$ cannot admit a two-cylinder decomposition. Hence the cylinder decomposition in the direction $\theta$ is either in Model A or Model B.
Consider the  saddle connection $\delta_0$ in the direction $\theta$ which passes through the unique regular fixed point of the Prym involution of $X$.
There are exactly two saddle connections in the direction $\theta$ with length half of $\delta_0$, namely $\delta_1,\delta_2$.
If the corresponding cylinder decomposition is of Model A, then we must have four such saddle connections.
Therefore, we can conclude that this decomposition is of Model B.

\medskip

Let $p'=(w',h',e',t')$ be the prototype in $\Pcal^B_D$ of the cylinder decomposition in the direction $\theta$.
Consider the saddle connection $\delta$ whose union with $\delta_1$ is a boundary component of a semi-simple cylinder in the direction $\theta$.
Comparing with the prototypical surface in Proposition~\ref{prop:normalize:A}, we get
$$
\frac{w'-\lambda'}{\lambda'}=\frac{|\delta_1|}{|\delta_1|+|\delta|}=\frac{(\delta_1)_y}{(\delta_1)_y+(\delta)_y}.
$$
where $(\alpha)_y$ stands for the $y$-component of the holonomy vector of the saddle connection $\alpha$.
The formulas~\eqref{eq:S1:to:B} and \eqref{eq:S2:to:B} then follow from a careful inspection of the number of times $\delta$ crosses each horizontal cylinder.
\end{proof}

We introduce now some more switch moves to connect a prototype in $\Pcal^B_D$ with other prototypes.
In what follows, $(X,\omega)$ is the prototypical surface corresponding to a prototype $p=(w,h,t,e)$ in $\Pcal^B_D$.
\begin{Lemma}[$S_5$ move]\label{lm:S5:move}\hfill
\begin{enumerate}
\item If $t=0$, then $(X,\omega)$ admits a cylinder decomposition in Model B in the vertical direction with prototype $(w',h',0,e')$, where
$$
e'=3e+4h-2w.
$$
\item If $t\neq 0$ and $\lambda+e+2h-w-t >0$, then $(X,\omega)$ admits a cylinder decomposition in Model A in the direction  of slope $\frac{h+\lambda}{t}$. Let $(w',h',t',e')$ be the prototype of this cylinder decomposition. Then we have
    $$
    e'=3e+4h-2w-2t.
    $$
\end{enumerate}
In both cases, we will call the prototype $(w',h',t',e')$ the transformation of $(w,h,t,e)$ by the $S_5$ move.
\end{Lemma}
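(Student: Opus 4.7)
My plan is to follow the strategy of Proposition~\ref{prop:switch:move}: in each case I identify a distinguished (semi-)simple cylinder $\mathcal{C}$ in the claimed direction on the prototypical Model~B surface $X_D(w,h,t,e)$ drawn as in Figure~\ref{fig:H6:ModAB} (right), verify that the stated inequality is exactly the admissibility condition for $\mathcal{C}$ to exist, compute $\mathrm{Area}(\mathcal{C})$ directly from the picture, and apply Lemma~\ref{lm:cyl} in the form $\mathrm{Area}(\mathcal{C})=\lambda\lambda'/4$. The relations $wh=\lambda^2-e\lambda$ and $2\lambda'=e'+\sqrt{D}$ then convert the value of $\lambda'$ into the claimed value of $e'$.

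For Case 1 ($t=0$), the vanishing of the twist makes the Model~B picture symmetric in a way that forces the vertical direction to be periodic and again of Model~B type. I will locate a strictly semi-simple vertical cylinder $\mathcal{C}$ whose dimensions are $\tfrac{2\lambda-w}{2}\times\tfrac{\lambda+h}{2}$ by reading them off Figure~\ref{fig:H6:ModAB} (the first factor is positive precisely because $\lambda>w/2$ in Model~B). Then
\[
\lambda\lambda'=4\,\mathrm{Area}(\mathcal{C})=(2\lambda-w)(\lambda+h)=(2e+2h-w)\lambda+wh,
\]
where the last equality uses $\lambda^2=e\lambda+wh$. Hence $\lambda'=\lambda+e+2h-w$ and $e'=3e+4h-2w$, as claimed.

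For Case 2 ($t\neq 0$), in the direction of slope $\tfrac{h+\lambda}{t}$ I will identify a simple cylinder $\mathcal{C}$ whose core curve exploits the twist between the two middle bands of Model~B to close up after a single traverse. A short computation will show that the inequality $\lambda+e+2h-w-t>0$ is equivalent to $\mathcal{C}$ having positive width strictly less than $\lambda/2$, which is the admissibility criterion for a simple cylinder in Model~A. Computing $\mathrm{Area}(\mathcal{C})$ and repeating the algebra of Case 1 yields $\lambda\lambda'=(2e+2h-w-t)\lambda+wh$, so $\lambda'=\lambda+e+2h-w-t$ and $e'=3e+4h-2w-2t$. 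That the resulting decomposition is genuinely of Model~A follows from the bound on the width of $\mathcal{C}$.

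The main obstacle I anticipate is the bookkeeping in Case 2: because $t\neq 0$, a trajectory of slope $\tfrac{h+\lambda}{t}$ crosses the twisted identification between the two middle bands, and one must verify that the resulting simple cylinder closes up correctly, with its boundary saddle connections meeting the singularity only at their endpoints. Once this geometric picture is pinned down, both the admissibility condition and the area computation reduce to a triangle-parallelogram decomposition analogous to those in Proposition~\ref{prop:switch:move}, and the algebra proceeds as above.
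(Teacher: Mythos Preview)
Your proposal is correct and follows exactly the approach the paper intends: the paper does not give an explicit proof of this lemma, relying instead on Figure~\ref{fig:switch:567} and the analogy with Proposition~\ref{prop:switch:move}, and your argument is precisely the computation one obtains by carrying out that analogy. In particular, your area calculation $\lambda\lambda'=(2\lambda-w)(\lambda+h)$ in Case~1 is the same as the one for the $S_1$ move (hence the identical formula $e'=3e+4h-2w$), with the difference being that the vertical cylinder here is strictly semi-simple rather than simple; the Case~2 refinement with the extra $-t$ term is the expected modification once the twist is nonzero.
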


\begin{Lemma}[$S_6$ move]\label{lm:S6:move}
The surface $(X,\omega)$ always admits a  4-cylinder decomposition in the direction of slope $\frac{\lambda+h}{\lambda+t}$.
Let $(w',h',e',t)$ be the prototype of this cylinder decomposition.
\begin{enumerate}
\item If $w+t-2h-e>0$, then $(w',h',t',e') \in \Pcal^A_D$, and
$$
e'=3e+4h-2w.
$$
\item If $w+t-2h-e<0$, then $(w',h',t',e') \in \Pcal^A_D$, and
$$
e'=e+2t.
$$
\item If $w+t-2h-e=0$, then $(w',h',t',e') \in \Pcal^B_D$, and
$$
e'=e+2t.
$$
The prototype $(w',h',t',e')$ will be called the transformation of $(w,h,t,e)$ by the $S_6$ move.
\end{enumerate}
\end{Lemma}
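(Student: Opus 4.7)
The plan is to mimic closely the arguments of Proposition~\ref{prop:switch:move} and Lemma~\ref{lm:S5:move}: start from the Model B picture of $X_D(w,h,t,e)$ as in Figure~\ref{fig:H6:ModAB} (right), identify a distinguished saddle connection in the direction of slope $k=\frac{\lambda+h}{\lambda+t}$, use it to locate a (semi-)simple cylinder $\Cc$ in that direction, and then apply Lemma~\ref{lm:cyl} to read off $e'$ from $\Aa(\Cc)=\frac{\lambda\cdot\lambda'}{4}$.

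Concretely, in the standard Model~B picture of $X_D(w,h,t,e)$, the two fixed points of the Prym involution lie at the midpoints of the two horizontal cylinders, and the segment of slope $k$ joining consecutive copies of the singularity landing on the upper cylinder has holonomy vector $(\lambda+t,\lambda+h)$; this defines a saddle connection $\delta$ bounding a cylinder $\Cc$ in direction $k$. A direct geometric inspection (writing down the coordinates of the cylinder's boundary components in terms of $w,h,t,\lambda$) shows that $\Cc$ is always a full cylinder of the decomposition, and a small computation using $\lambda^{2}=e\lambda+wh$ yields
\[
\Aa(\Cc) \;=\; \frac{\lambda}{4}\bigl(\lambda+2t\bigr) \quad\text{or}\quad \frac{\lambda}{4}\bigl(\lambda+3e+4h-2w\bigr),
\]
according to whether the cylinder fits ``straight across'' the two horizontal cylinders or ``wraps around''. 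By Lemma~\ref{lm:cyl}, the corresponding values of $\lambda'$ are $\lambda+2t$ and $\lambda+3e+4h-2w$, which after substituting $2\lambda=e+\sqrt{D}$, $2\lambda'=e'+\sqrt{D}$ give the two claimed formulas for $e'$.

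The case distinction then comes from comparing where the second boundary saddle connection of $\Cc$ terminates, which is controlled by the sign of the quantity $\kappa:=w+t-2h-e$: for $\kappa>0$ the cylinder $\Cc$ is simple and the corresponding direction falls in Model~A with $e'=3e+4h-2w$; for $\kappa<0$ the saddle connection ``wraps'' once and $\Cc$ is again simple, placing the direction in Model~A but now with $e'=e+2t$; and for $\kappa=0$ the two boundary saddle connections coincide up to the Prym involution, so $\Cc$ is strictly semi-simple and the direction is of Model~B, with $e'=e+2t$. The simplicity versus semi-simplicity can be verified either by translating the condition $\kappa\gtrless 0$ into the inequality characterizing Model~A or~B from Remark~\ref{rmk:cond:prot:A:B} applied to $(w',h',t',e')$, or by a direct count of the saddle connections composing each boundary component of $\Cc$.

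The main obstacle, as in Proposition~\ref{prop:switch:move}, is the bookkeeping: one has to check that the direction $k$ does produce a honest cylinder decomposition in all three sign regimes (without any parasitic degeneration coming from the twist parameter $t$), and that the distinguished cylinder $\Cc$ is really the semi-simple (or simple) one whose area enters Lemma~\ref{lm:cyl}. This is essentially a careful inspection of the Model B picture; once done, the extraction of $e'$ from $\Aa(\Cc)$ and the identification of the prototype's model via $\kappa$ are straightforward.
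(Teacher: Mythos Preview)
The paper does not actually give a proof of this lemma: Lemmas~\ref{lm:S5:move}, \ref{lm:S6:move}, \ref{lm:S7:move} are simply stated in the appendix together with Figure~\ref{fig:switch:567}, and the reader is implicitly expected to carry out computations parallel to those of Proposition~\ref{prop:switch:move}. Your sketch does precisely that---locate a (semi-)simple cylinder in the direction of slope $\frac{\lambda+h}{\lambda+t}$, compute its area using $\lambda^2=e\lambda+wh$, and invoke Lemma~\ref{lm:cyl} to extract $e'$---so your approach is exactly what the paper intends.
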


\begin{Lemma}[$S_7$ move]\label{lm:S7:move}
Assume that $\lambda > w-h-t$. Then $(X,\omega)$ admits a 4-cylinder decomposition in the direction of slope $\frac{\lambda+h}{w-t}$. Let $(w',h',t',e')$ be the prototype of this cylinder decomposition.
\begin{enumerate}
  \item If $t < e+h$ then $(w',h',t',e') \in \Pcal^A_D$, and
  $$
  e'=e+2h-2w+2t.
  $$
  \item If $t > e+h$ then $(w',h',t',e') \in \Pcal^A_D$, and
  $$
  e'=3e+4h-2w.
  $$
  \item If $t = e+h$ then $(w',h',t',e') \in \Pcal^B_D$, and
  $$
  e'=3e+4h-2w.
  $$
\end{enumerate}
The prototype $(w',h',t',e')$ will be called the transformation of $(w,h,t,e)$ by the $S_7$ move.
\end{Lemma}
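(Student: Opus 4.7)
The plan is to proceed in direct analogy with Proposition~\ref{prop:switch:move} and Lemmas~\ref{lm:S5:move}--\ref{lm:S6:move}. I would work on the prototypical surface $(X,\omega) = X_D(w,h,t,e)$ of Model~B (the right-hand picture of Figure~\ref{fig:H6:ModAB}, now allowing for a general twist $t$) and trace, starting from a preimage of the singularity, a geodesic of slope $k = (\lambda+h)/(w-t)$. The first step is to verify that this geodesic closes up into a cylinder $\mathcal C$ after crossing each horizontal cylinder at most twice. The admissibility hypothesis $\lambda > w-h-t$ can be rewritten, after multiplication by $k$, as an inequality of the form ``height of the candidate region $<\lambda/2$'', in the style of the proofs of Proposition~\ref{prop:switch:move} and Lemma~\ref{lm:S6:move}; so this step is essentially bookkeeping.

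Once $\mathcal C$ is identified, I would compute its area directly from the picture. The base is a horizontal segment whose length is linear in $w,t,h,\lambda$, and the height is a vertical segment determined by the slope $k$. Using the defining identity $\lambda^2 = e\lambda + wh$ to clear denominators, the product (base)~$\times$~(height) simplifies to
\begin{equation*}
\mathrm{Area}(\mathcal C) = \frac{\lambda}{4}(\lambda + \delta),
\end{equation*}
where $\delta \in \{h-w+t,\, e+2h-w\}$ depending on which saddle connection bounds $\mathcal C$. By Lemma~\ref{lm:cyl} this area equals $\lambda\lambda'/4$, hence $\lambda' = \lambda + \delta$; substituting $2\lambda = e+\sqrt D$ and $2\lambda' = e'+\sqrt D$ yields $e' = e + 2\delta$, which is precisely $e+2h-2w+2t$ or $3e+4h-2w$ as announced.

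The three cases are distinguished by the sign of $(e+h)-t$, which controls whether the geodesic exits a horizontal cylinder through the left or the right of a specific vertical saddle connection on its top border. When $t < e+h$, the exit occurs in the ``short'' configuration: the two boundary saddle connections of $\mathcal C$ on each side are distinct, so $\mathcal C$ is simple and the decomposition is of Model~A with $\delta = h-w+t$. When $t > e+h$, the exit occurs in the ``long'' configuration: $\mathcal C$ is still simple but its shape is different, and one reads off $\delta = e+2h-w$. Finally, when $t = e+h$, the two saddle connections on one side of $\mathcal C$ coincide, so $\mathcal C$ becomes strictly semi-simple and the decomposition is of Model~B, with the same $\delta = e+2h-w$ as in the limit from case~(2). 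The membership of $(w',h',t',e')$ in $\Pcal^A_D$ or $\Pcal^B_D$ then follows from Remark~\ref{rmk:cond:prot:A:B} once one checks the sign of $w'/2 - \lambda'$ in each case.

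The main obstacle will be drawing the correct picture of the Model~B surface with a \emph{nonzero} twist $t$: the twist alters the identification of the top and bottom edges of the horizontal cylinders and therefore determines at which singularity the geodesic reconnects. Once this picture is fixed, the geometric computations are routine; identifying the ``short'' versus ``long'' configurations, and in particular pinning down the critical case $t = e+h$ where $\mathcal C$ transitions from simple to strictly semi-simple, is the delicate part of the argument.
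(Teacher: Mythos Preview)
Your proposal is correct and follows exactly the approach the paper intends: the paper does not give an explicit proof of Lemma~\ref{lm:S7:move} (nor of Lemmas~\ref{lm:S5:move}--\ref{lm:S6:move}), relying instead on Figure~\ref{fig:switch:567} and the evident analogy with the detailed computations of Proposition~\ref{prop:switch:move}. Your outline---identify the cylinder $\mathcal C$ in direction $\theta_7$, compute its area as $\frac{\lambda}{4}(\lambda+\delta)$ using $\lambda^2=e\lambda+wh$, then invoke Lemma~\ref{lm:cyl} to read off $e'=e+2\delta$---is precisely this methodology, and your case analysis on the sign of $(e+h)-t$ correctly captures the trichotomy.
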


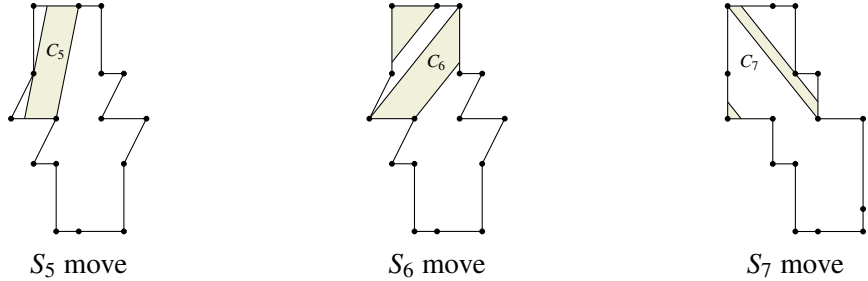
\begin{figure}[htb]
\begin{minipage}[t]{0.3\linewidth}
\centering
\begin{tikzpicture}[scale=0.3]
 \fill[blue!30!yellow!20] (0.6,5) -- (2,5) -- (3,10) -- (1.6,10) -- cycle;
 \draw (0,5) -- (2,5) -- (1,3) -- (2,3) -- (2,0) -- (5,0) -- (5,3) -- (6,5) -- (4,5) -- (5,7) -- (4,7) -- (4,10) -- (1,10) -- (1,7) -- cycle;
 \draw (0.6,5) -- (1.6,10) (2,5) -- (3,10);
 \foreach \x in {(0,5),(1,10), (1,7), (1,3), (2,5), (2,3), (2,0), (3,10), (3,0), (4,10), (4,7), (4,5), (5,7), (5,3), (5,0), (6,5)} \filldraw[fill=black] \x circle (3pt);
 \draw (2,8) node {\tiny $C_5$};
 \draw (3,-1.5) node {$S_5$ move};
\end{tikzpicture}
\end{minipage}
\begin{minipage}[t]{0.3\linewidth}
\centering
\begin{tikzpicture}[scale=0.3]
 \fill[blue!30!yellow!20] (0,5) -- (2,5) -- (4,7.5) -- (4,10) -- cycle;
 \fill[blue!30!yellow!20] (1,7.5) -- (3,10) -- (1,10) -- cycle;
 \draw (0,5) -- (2,5) -- (1,3) -- (2,3) -- (2,0) -- (5,0) -- (5,3) -- (6,5) -- (4,5) -- (5,7) -- (4,7) -- (4,10) -- (1,10) -- (1,7) -- cycle;
 \draw (0,5) -- (4,10) (2,5) -- (4,7.5) (1,7.5) -- (3,10);
 \foreach \x in {(0,5),(1,10), (1,7), (1,3), (2,5), (2,3), (2,0), (3,10), (3,0), (4,10), (4,7), (4,5), (5,7), (5,3), (5,0), (6,5)} \filldraw[fill=black] \x circle (3pt);
 \draw (3,7.5) node {\tiny $C_6$};
 \draw (3,-1.5) node {$S_6$ move};
\end{tikzpicture}
\end{minipage}
\begin{minipage}[t]{0.3\linewidth}
\centering
\begin{tikzpicture}[scale=0.3]
 \fill[blue!30!yellow!20] (4,5) -- (4,5.75) -- (0.6,10) -- (0,10) -- cycle;
 \fill[blue!30!yellow!20] (0,5) -- (0.6,5) -- (0,5.74) -- cycle;
 \draw (0,5) -- (2,5) -- (2,3) -- (3,3) -- (3,0) -- (6,0)  -- (6,5) -- (4,5) -- (4,7) -- (3,7) -- (3,10) -- (0,10) -- cycle;
 \draw (4,5) -- (0,10) (0.6,10) -- (4,5.75) (0,5.75) -- (0.6,5);
 \foreach \x in {(0,5),(0,10), (0,7), (2,3), (2,5), (2,10), (3,0), (3,10), (3,3), (3,7),  (4,7), (4,5), (4,0), (6,5), (6,), (6,0)} \filldraw[fill=black] \x circle (3pt);
 \draw (1,7.5) node {\tiny $C_7$};
 \draw (3,-1.5) node {$S_7$ move};
\end{tikzpicture}
\end{minipage}
\caption{The switch moves $S_5,S_6,S_7$}
\label{fig:switch:567}
\end{figure}

\subsection{Proof of Theorem~\ref{thm:D:even:n:sq} for $D\in \{52,68,84\}$}
\label{sec:except:1}
\begin{proof}\hfill

\noindent {\bf Case $D=52$.}
The three components of ${\mathcal S}^1_{52}$ are $\{-6,2\}$, $\{-4,0\}$ and $\{-2\}$.
We have $[-2]=(12, 1, 0, -2) \in \mathcal S^1_{52}$.
By Lemma~\ref{lm:connect:PA:PB:D:nonsq}, $(X,\omega)$ admits a cylinder decomposition in Model B, with prototype $(w',h',t',e')$ where $\frac{\lambda'}{w'}=\frac{\lambda+n}{\lambda+n+1}$ and $n=\lfloor \frac{w}{\lambda} \rfloor=4$. Direct computation shows that $(w',h',t',e')=(3, 4, 0, -2) \in \Pcal^B_{52}$.  This connects prototype $[-2]$ to $(3, 4, 0, -2)$.
Now the moves $S_2$ is admissible and we have $S_2(3, 4, 0, -2)=(9, 1, 0, -4)=[-4]$.  This connects $[-2]$ and $[-4]$.

We have $[2]=(12, 1, 0, 2) \in \mathcal S^1_{52}$.
By Lemma~\ref{lm:connect:PA:PB:D:nonsq}, $(X,\omega)$ admits a cylinder decomposition in Model B, with prototype $(w',h',t',e')$ where $\frac{\lambda'}{w'}=\frac{\lambda+n}{\lambda+n+1}$ and $n=\lfloor \frac{w}{\lambda} \rfloor=2$. Direct computation shows that $(w',h',t',e')=(3, 4, 0, -2) \in \Pcal^B_{52}$.  This connects prototype $[2]$ to $(3, 4, 0, -2)$ as desired. \medskip

\noindent {\bf Case $D=68$.}
The three components of ${\mathcal S}^1_{68}$ are $\{-6,2\}$, $\{-4,0,4\}$ and $\{-2\}$. We have $[-2]=(16,1,0,-2) \in \mathcal S^1_{68}$.
By Lemma~\ref{lm:connect:PA:PB:D:nonsq}, $(X,\omega)$ admits a cylinder decomposition in Model B, with prototype $(w',h',t',e')$ where $\frac{\lambda'}{w'}=\frac{\lambda+n}{\lambda+n+1}$ and $n=\lfloor \frac{w}{\lambda} \rfloor=5$. Direct computation shows that $(w',h',t',e')=(8,1,0,6) \in \Pcal^B_{68}$.  This connects prototype $[-2]$ to $(8,1,0,6)$.
Now the moves $S_2$ and $S_4$ are admissible and we have
$S_2(8,1,0,6)=(13, 1, 0, 4)=[4]$ and $S_4(8,1,0,6)=(16, 1, 0, 2)=[2]$. This connects the three components together as desired. \medskip

\noindent {\bf Case $D=84$.}
The three components of ${\mathcal S}^1_{84}$ are $\{-6,2\}$, $\{-8,-4,0,4\}$ and $\{-2\}$.
We have $[-2]=(20, 1, 0, -2) \in \mathcal S^1_{84}$.
By Lemma~\ref{lm:connect:PA:PB:D:nonsq}, $(X,\omega)$ admits a cylinder decomposition in Model B, with prototype $(w',h',t',e')$ where $\frac{\lambda'}{w'}=\frac{\lambda+n}{\lambda+n+1}$ and $n=\lfloor \frac{w}{\lambda} \rfloor=5$. Direct computation shows that $(w',h',t',e')=(4, 5, 0, -2) \in \Pcal^B_{84}$.  This connects prototype $[-2]$ to $(4, 5, 0, -2)$.
Now the moves $S_2$ is admissible and we have $S_2(4, 5, 0, -2)=(17, 1, 0, -4)=[-4]$. This connects $[-2]$ and $[4]$.

We have $[2]=(20, 1, 0, 2) \in \mathcal S^1_{84}$.
By Lemma~\ref{lm:connect:PA:PB:D:nonsq}, $(X,\omega)$ admits a cylinder decomposition in Model B, with prototype $(w',h',t',e')$ where $\frac{\lambda'}{w'}=\frac{\lambda+n}{\lambda+n+1}$ and $n=\lfloor \frac{w}{\lambda} \rfloor=3$. Direct computation shows that $(w',h',t',e')=(4, 5, 0, -2) \in \Pcal^B_{84}$.  This connects prototype $[2]$ to $(4, 5, 0, -2)$ as desired.
\end{proof}
\subsection{Theorem~\ref{thm:D1mod8:connect:PA1:PA2} for $D\in \{41,65,73,105\}$ and $D\in\mathrm{Exc}_2$}
\label{sec:D1mod8:except}
\begin{proof}\hfill

\noindent {\bf Case $D=41$.} We first observe that  $\Sc^1_{41}$ has two components $\{(4,1,0,-5),(10,1,0,1)\}$ and $\allowbreak \{(8,1,0,-3),(10,1,0,-1)\}$, while $\Sc^2_{41}$ has only one component $\{(2,2,0,-5),(4,2,0,-3)\}$.

Set $\allowbreak p_1=(10,1,0,1), p_2=(10,1,0,-1), p_3=(2,2,0,-5)$.
By Proposition~\ref{prop:H6:noModA}, any $\GL^+(2,\R)$-orbit in $\Omega E_{41}(6)$ contains a prototypical surface associated to a prototype in $\Pcal^A_{41}$.
By Lemma~\ref{lm:reduced}, any prototype in $\Pcal^A_{41}$ is equivalent to one of $\{p_1,p_2,p_3\}$.
Using Lemma~\ref{lm:connect:PA:PB:D:nonsq}, we see that for all $i\in \{1,2,3\}$, $p_i$ is equivalent to either $q_1=(2,4,0,-3)$ or $q_2=(2,4,1,-3)$.
Note that both $q_1,q_2$ are elements of $\Pcal^B_{41}$.
But we have the following relations
$$\left\{
\begin{array}{ccccc}
(2,4,0,-3) \in \Pcal^B_{41} & \overset{S_5}{\longrightarrow} & (8,1,0,3)\in \Pcal^B_{41} & \overset{S_3}{\longrightarrow} & (10,1,0,1) \\
(2,4,1,-3) \in \Pcal^B_{41} & \overset{S_5}{\longrightarrow} & (10,1,0,1). & &
\end{array}
\right.
$$
Thus the locus $\Omega E_{41}(6)$ contains a single $\GL^+(2,\R)$-orbit.

\medskip

\noindent {\bf Case $D=65$.}  One can easily check that $\Pcal^{A_2}_{65}$ contains exactly two prototypes $\{(2,2,0,-7),(8,2,0,-1)\}$. From Lemma~\ref{lm:connect:PA:PB:D:nonsq}, we see that both prototypes in $\Pcal^{A_2}_{65}$ is equivalent to one prototype in the following family
$$
\{(4,4,0,-1),(4,4,1,-1),(4,4,2,-1),(4,4,3,-1)\}.
$$
Set $q_i=(4,4,i,-1), \; i=0,\dots,3$. Note that $q_i\in \Pcal^B_{65}$ for all $i$.
We have the following relations
$$
\left\{
\begin{array}{l}
q_0 \overset{S_2}{\longrightarrow} (\cdot,\cdot,\cdot,-3)\in \Pcal^{A_1}_{65}\\
q_1 \overset{S_5}{\longrightarrow} (14,1,0,3) \in \Pcal^{A_1}_{65}\\
q_2 \overset{S_6}{\longrightarrow} (14,1,0,3) \in \Pcal^{A_1}_{65}\\
q_3 \overset{S_6}{\longrightarrow} (10,1,0,5) \in \Pcal^B_{65} \overset{S_2}{\longrightarrow} (\cdot,\cdot,\cdot,-3) \in \Pcal^{A_1}_{65}.
\end{array}
\right.
$$
By Theorem~\ref{theo:H6:connect:PD} we know that $\Pcal^{A_1}_{65}$ contains a single component.
Thus the proposition is proved for this case.

\medskip
\noindent {\bf Case $D=73$.} In this case $\Pcal^{A_2}_{73}$ contains exactly two prototypes $\{(6,2,0,-5),(8,2,0,-3)\}$.
By Lemma~\ref{lm:connect:PA:PB:D:nonsq}, we see that both elements of $\Pcal^{A_2}_{73}$ are equivalent to a prototype in the family
$$
\{(2,6,0,-5),(2,6,1,-5)\}
$$
We have the following relations
$$
\left\{
\begin{array}{l}
(2,6,0,-5)\in \Pcal^B_{73} \overset{S_2}{\longrightarrow} (\cdot,\cdot,\cdot,-7) \in \Pcal^{A_1}_{73}\\
(2,6,1,-5) \in \Pcal^B_{73} \overset{S_7}{\longrightarrow} (12,1,0,5) \in \Pcal^B_{73} \overset{S_2}{\longrightarrow} (\cdot,\cdot,\cdot,-7) \in \Pcal^{A_1}_{73}.
\end{array}
\right.
$$
Since $\Pcal^{A_1}_{73}$ has only one component by Theorem~\ref{theo:H6:connect:PD}, the proposition is proved for this case.

\medskip

\noindent {\bf Case $D=105$.} In this case $\Pcal^{A_2}_{105}$ contains exactly two prototypes $\{(10,2,0,-5),(12,2,0,-3)\}$.
By Lemma~\ref{lm:connect:PA:PB:D:nonsq}, both elements of $\Pcal^{A_2}_{73}$ are equivalent to a prototype in the family
$$
\{(4,6,0,-3),(4,6,1,-3)\}
$$
We have the following relations
$$
\left\{
\begin{array}{l}
(4,6,0,-3)\in \Pcal^B_{105} \overset{S_4}{\longrightarrow} (\cdot,\cdot,\cdot,-7) \in \Pcal^{A_1}_{105}\\
(4,6,1,-3) \in \Pcal^B_{105} \overset{S_6}{\longrightarrow}  (\cdot,\cdot,\cdot,-1) \in \Pcal^{A_1}_{105}.
\end{array}
\right.
$$
Again, we conclude by Theorem~\ref{theo:H6:connect:PD}.

\bigskip

We finish the proof for
$$
D\in \mathrm{Exc}_2=\{113,145, 153, 177, 209,265,313,481\}.
$$
The strategy is the same: $\Pcal^{A_2}_{D}$ contains exactly two components. From Lemma~\ref{lm:connect:PA:PB:D:nonsq}, one sees that both components is equivalent to some prototypes in $\Pcal^B$. We then use the Switch moves $S_i$ for $i=1,\dots,7$ to connect these prototypes to $\Pcal^{A_1}_{D}$. This last step is easily done by a direct computation.
Since by Theorem~\ref{theo:H6:connect:PD}, $\Pcal^{A_1}_{D}$ contains a single component, this finishes the proof of the theorem.
\end{proof}


\begin{thebibliography}{EMM2}

\providecommand{\bysame}{\leavevmode  ---\  } \providecommand{\og}{``}
\providecommand{\fg}{''}

\bibitem[HL06]{HLelievre}  {\scshape P.~Hubert {\normalfont
    and}    S.~Leli\`evre    },    {\og    Prime arithmetic Teichm\"uller discs in H(2)   \fg},  \emph{Isr. J. Math.} {\bf 151} (2006), 281-321.

\bibitem[LN14]{Lanneau:Manh:H4}  {\scshape E.~Lanneau {\normalfont
    and}    D.-M.~Nguyen    },    {\og    Teichm\"uller
curves generated by Weierstrass Prym eigenforms in genus three and genus four
   \fg},  \emph{J. of Topol. 7} (2014), no. 2, 475--522.


\bibitem[Mc05]{Mc4} {\scshape  C.~McMullen}, {\og {Teichm\"uller} curves in genus two:
  Discriminant  and  spin\fg}, \emph{Math.\  Ann.}  {\bf 333}  (2005),
  pp.~87--130.

\bibitem[Mc06]{Mc7} \bysame  , {\og Prym  varieties and {Teichm\"uller}
  curves \fg}, \emph{Duke Math. J.} {\bf 133} (2006), pp.~569--590.

\bibitem[M\"ol14]{Moller2011}  M.~M\"oller, {\og Prym covers, theta functions and Kobayashi curves in Hilbert modular surfaces \fg}, {\em Amer. J. Math.} {\bf 136} (2014), no.4, pp.~995--1021.

\bibitem[TZ16]{TZ2016}  {\scshape D.~Torres {\normalfont
    and}    J.~Zachhuber   }, {\og Orbifold Points on Prym-Teichm\"uller Curves in Genus Three  \fg}, {\em International Mathematics Research Notices} (2016).

\bibitem[TZ17]{TZ2017}  \bysame {\og Orbifold Points on Prym-Teichmüller Curves in Genus Four  \fg}, {\em Journal of the Institute of Mathematics of Jussieu} (2017).

\bibitem[Zac17]{Zac2017}  {\scshape J.~Zachhuber}, {\og The Galois Action and a Spin Invariant for Prym-Teichm\"uller Curves in Genus 3  \fg}, {\em Bulletin de la SMF} to appear.

\end{thebibliography}


\end{document}